\providecommand{\tabularnewline}{\\}
\newcommand{\lyxaddress}[1]{
	\par {\raggedright #1
	\vspace{1.4em}
	\noindent\par}
}
\theoremstyle{plain}
\newtheorem{thm}{\protect\theoremname}
\theoremstyle{plain}
\newtheorem{lem}[thm]{\protect\lemmaname}
\newenvironment{proof}[1][\protect\proofname]{\par
	\normalfont\topsep6\p@\@plus6\p@\relax
	\trivlist
	\itemindent\parindent
	\item[\hskip\labelsep\scshape #1]\ignorespaces
}{%
	\endtrivlist\@endpefalse
}
\providecommand{\proofname}{Proof}
\theoremstyle{plain}
\newtheorem{prop}[thm]{\protect\propositionname}
\theoremstyle{definition}
\newtheorem{defn}[thm]{\protect\definitionname}
\renewcommand{\vec}[1]{\boldsymbol #1}
\renewcommand\[{\begin{equation}}
\renewcommand\]{\end{equation}}
\providecommand{\definitionname}{Definition}
\providecommand{\lemmaname}{Lemma}
\providecommand{\propositionname}{Proposition}
\providecommand{\theoremname}{Theorem}
\begin{document}
\title{Compact Feature-Aware Hermite-Style \\
High-Order Surface Reconstruction}
\author{Yipeng Li$^{1}$, Xinglin Zhao$^{1}$\thanks{Current address: Google Inc., Mountain View, CA, USA.},
Navamita Ray$^{2}$, and Xiangmin Jiao$^{1}$\thanks{Corresponding author. Email: xiangmin.jiao@stonybrook.edu,}}
\date{}
\maketitle

\lyxaddress{\begin{center}
$^{1}$Department of Applied Mathematics \& Statistics and Institute
for Advanced Computational Science, Stony Brook University, USA\\
$^{2}$Computer, Computational \& Statistical Sciences, Los Alamos
National Laboratory, Los Alamos, NM, USA
\par\end{center}}
\begin{abstract}
High-order surface reconstruction is an important technique for CAD-free,
mesh-based geometric and physical modeling, and for high-order numerical
methods for solving partial differential equations (PDEs) in engineering
applications. In this paper, we introduce a novel method for accurate
and robust reconstructions of piecewise smooth surfaces from a triangulated
surface. Our proposed method extends the Continuous Moving Frames
(CMF) and the Weighted Averaging of Local Fittings (WALF) methods
(\emph{Engrg. Comput.} 28 (2012)) in two main aspects. First, it utilizes
a Hermite-style least squares approximation to achieve fourth and
higher-order accuracy with compact support, even if the input mesh
is relatively coarse. Second, it introduces an iterative feature-aware
parameterization to ensure high-order accurate, $G^{0}$ continuous
reconstructions near sharp features. We present the theoretical framework
of the method and compare it against CMF and WALF in terms of accuracy
and stability. We also demonstrate that the use of the Hermite-style
reconstruction in the solutions of PDEs using finite element methods
(FEM), and show that quartic and sextic FEMs using the high-order
reconstructed surfaces produce nearly identical results as using exact
geometry while providing additional flexibility.\\
\textbf{Keywords}: high-order methods; surface reconstruction; weighted
least squares; Hermite approximations; geometric discontinuities;
finite element methods
\end{abstract}

\section{Introduction}

Surface meshes and their manipulations are critical for geometric
modeling, meshing, numerical simulations, and other related problems.
Some example problems that involve manipulating surface meshes include
mesh generation and mesh enhancement for high-order finite element
methods \cite{FG00MGA}, mesh smoothing in ALE methods \cite{DHPR04ALE},
mesh adaptation in moving boundary problems \cite{JCNH09AMA}, and
geometric modeling and meshing in computer graphics \cite{Fleishman2005,Walton1996}.
In these settings, it is often critical to have a high-order accurate
representation of the surface to support the mesh manipulations. The
high-order surfaces are also important for modern numerical discretization
methods, such as quadratic, quartic, and even sextic finite element
methods and higher-order spectral element methods, which have become
increasingly in recent years. However, a continuous CAD model is often
not available. Instead, only a surface mesh, typically with piecewise
linear or bilinear faces, is available.

In this paper, we consider the problem of reconstructing a high-order
accurate, continuous surface from a surface triangulation. We refer
to this problem as \emph{high-order surface reconstruction} (or simply
\emph{high-order reconstruction}). By ``high order,'' we mean that
the method should be able to deliver more than second-order accuracy,
and preferably fourth or higher order, compared to just first or second
order accuracy of traditional techniques. The high-order accuracy
is important for accurate treatment of boundary conditions along curved
boundaries, especially with high-order finite element methods.

In \cite{Jiao2011RHO}, four requirements were posed for high-order
reconstruction:
\begin{description}
\item [{Geometric~accuracy:}] The reconstruction should be accurate and
asymptotically convergent to the exact surface to high order under
mesh refinement.
\item [{Continuity:}] The reconstructed surface should be continuous to
some degree (e.g., $G^{0}$, $G^{1}$, or $G^{2}$ continuous, depending
on applications).
\item [{Feature~preservation:}] The reconstruction should preserve sharp
features (such as ridges and corners) in the geometry.
\item [{Numerical~stability:}] The reconstruction should be numerically
stable and must not be oscillatory under noise.
\end{description}
Different applications may have different emphases on the requirements.
For example, in computer graphics and geometric design, the visual
effect may be the ultimate goal, so smoothness and feature preservation
may be more important. Our focus in this paper is on numerical solutions
of partial differential equations (PDEs), for which the numerical
accuracy and stability are critical. Indeed, if the geometry is low-order
accurate, the numerical solutions of a high-order finite element method
will most likely be limited to low-order accuracy. An unstable surface
approximation with excessive oscillations can have even more devastating
effect on the numerical simulations. Some efforts, such as isogeometric
analysis \cite{Hughes_isogeo}, aim to improve accuracy by using continuous
CAD models directly in numerical simulations, but CAD models are often
not available, such as in moving boundary problems, or they may be
impractical to be used directly, such as on large-scale supercomputers.
When only a mesh is given, a high-order reconstruction may sometimes
be the best option, in that they provide additional flexibility and
can deliver the same accuracy as using exact geometry.

Two methods, called \emph{Continuous Moving Frames} (\emph{CMF}) and
\emph{Weighted Averaging of Local Fittings} (\emph{WALF}), were proposed
in \cite{Jiao2011RHO}, for reconstructing a piecewise smooth surfaces.
Both methods were based on weighted least squares (\emph{WSL}) approximations
using local polynomial fittings with the assumption that the vertices
of the mesh accurately sample the surface, and the connectivity of
the mesh correctly represent the topology of the surface. These methods
can achieve fourth- and even higher order accuracy, and WALF guarantees
$G^{0}$ continuity for smooth surfaces. In contrast, most other methods
could achieve only first- or second-order accuracy. Between CMF and
WALF, the former tends to be more accurate whereas the latter tends
to be more efficient. However, both CMF and WALF had two key limitations.
First, if the input mesh is relatively coarse, then there may be a
lack of points in the stencil, so the methods are forced to use low-order
approximations to avoid instability \cite{Jiao2011RHO}. This loss
of accuracy tends to happy more pronounced near boundaries or sharp
features (i.e., ridges and corners), where the stencils tend to be
one-sided. Second, $G^{0}$ continuity may be lost near sharp features,
so the reconstructed surface may not be ``watertight.'' As a result,
when used to generate high-order finite element meshes, the parameterization
of the surface elements incident on sharp features may suffer from
a loss of smoothness and in turn loss of precision in the numerical
solutions.

In this paper, we address these limitations of CMF and WALF to achieve
high-order accuracy and $G^{0}$ continuity near sharp features. To
this end, we extend these methods in two main aspects. First, we introduce
a Hermite-style weighted least squares approximation, to take into
account both point locations as well as normals in surface reconstruction.
We assume that the point coordinates and normals are high-order accurate,
which, for example, may be obtained from the original CAD models or
obtained from solutions of differential equations. A key advantage
of this Hermite-style reconstruction is that it allows much more compact
stencils, so high-order accuracy can be achieved even with relatively
coarse input meshes. Second, we introduce an iterative feature-aware
parameterization for constructing high-order surface elements, so
that these surface elements can define a $G^{0}$ continuous surface
with smooth parameterizations and uniform high-order accuracy.

Using both point and normal for numerical approximation on discrete
surfaces is not a new idea. It is analogous to Hermite interpolation
in numerical analysis. For surface modeling, Walton \cite{Walton1996}
defined an approach to reconstruct $G^{1}$ continuous surfaces. Another
example is the curved PN-triangles \cite{VPBM01CPN}. Other related
work includes \cite{GI04NCO}, in which points and normals are used
together for estimating curvatures. However, to the best of our knowledge,
our proposed technique is among the first that leverage both points
and normals to deliver guaranteed high-order accuracy in surface reconstructions
with sharp features. As a result, it provides a valuable alternative
to the traditional CAD models, such as NURBS \cite{Far93} and T-splines
\cite{Sederberg03TSplines} for engineering applications. In particular,
it provides a flexible approach for high-order methods for solving
PDEs, for which we will demonstrate that high-order reconstructed
surfaces enable nearly identical PDE solutions as using exact geometry
during mesh generation.

Besides the Hermite-style reconstruction, another contribution of
this work is to identify the significance of the smoothness of parameterizations
near features. Traditionally, smoothness in surface reconstruction
had focused on high-order (e.g., $G^{1}$, $G^{2}$, and even $G^{\infty}$)
continuity. For example, the reconstruction in \cite{Walton1996}
aimed for $G^{1}$ continuity, and the moving least squares (MLS)
\cite{LevinD.1998,Fleishman2005} aimed for $G^{\infty}$ continuity.
However, such high-order continuity does not have direct correlation
with the accuracy of the reconstructed surface or numerical PDEs.
In particular, the $G^{1}$ reconstruction of Walton \cite{Walton1996}
is no more accurate than the piecewise linear surface \cite{Jiao2011RHO}.
The MLS reconstruction was conjectured in \cite{LevinD.1998}, but
in practice it is non-convergent even for simple geometries such as
a torus \cite{Jiao2011RHO}. These higher-order continuities introduce
additional difficulties in resolving sharp features. In contrast,
our method aims for only $G^{0}$ continuity, but we emphasize the
smoothness of the parameterization of high-order surface elements
near sharp features, and we show that it has a direct correlation
with the accuracy of the high-order surface reconstruction and the
numerical solutions of FEM.

The remainder of the paper is organized as follows. Section~\ref{sec:Background}
presents some background knowledge, including local polynomial fittings
and the method of weighted least squares. Sections~\ref{sec:HWALF-surface}
and \ref{sec:HWALF-curve} introduce the algorithms of Hermite-style
weighted least squares for high-order surface and curve reconstructions,
respectively. Section~\ref{sec:Feature-Aware-HOSR} describes the
construction of a $G^{0}$ continuous surface using high-order parametric
elements with an iterative feature-aware parameterization. Section~\ref{sec:Numerical-Results}
assesses the proposed method in terms of geometric accuracy and feature
preservation, compares it against CMF and WALF, and demonstrates its
effectiveness as an alternative of exact geometry in PDE discretizations.
Section~\ref{sec:Conclusions} concludes the paper with some discussions
on the future work.

\section{\label{sec:Background}Background}

In this section, we review some basic concepts related to surfaces
and space curves, followed by a brief review of weighted least squares
(WLS) approximations and WLS-based surface reconstructions using CMF
and WALF. These techniques are the foundations of the feature-aware
Hermite-style reconstruction proposed in this paper.

\subsection{\label{subsec:surfaces-and-curves}Surfaces and Space Curves}

\subsubsection{\label{subsec:Smooth-Surfaces}Smooth Surfaces.}

Consider a smooth surface $\Gamma$ defined in the global $xyz$ coordinate
system. Given a point $\boldsymbol{x_{0}}=[x_{0},y_{0},z_{0}]^{T}$
on the surface (note that for convenience we treat points as column
vectors), let the origin of the local frame be at $\vec{x}_{0}$.
Let $\vec{m}_{0}$ be an approximate normal vector with unit length
at $\boldsymbol{x_{0}}$. Let $\boldsymbol{s}_{0}$ and $\boldsymbol{t}_{0}$
denote a pair of orthonormal basis vectors perpendicular to $\vec{m}_{0}$.
The vectors $\boldsymbol{s}_{0}$, $\boldsymbol{t}_{0}$, and $\vec{m}_{0}$
form the axes of a local $uvw$ coordinate system at $\boldsymbol{x}_{0}$.
Let $\vec{Q}_{0}$ be the matrix composed of column vectors $\boldsymbol{s}_{0}$,
$\boldsymbol{t}_{0}$, and $\vec{m}_{0}$, i.e., $\vec{Q}_{0}=\left[\boldsymbol{s}_{0},\,\boldsymbol{t}_{0},\,\vec{m}_{0}\right]$.
Any point $\boldsymbol{x}$ in the global coordinate system can be
then transformed to the point 
\begin{equation}
\vec{p}(\vec{u})=\left[u,v,w(\vec{u})\right]^{T}=\vec{Q}_{0}^{T}(\boldsymbol{x}-\boldsymbol{x}_{0})\label{eq:coordinate-transformation}
\end{equation}
in the local frame, where $\boldsymbol{u}=\left[u,v\right]^{T}$.
Since $\vec{m}_{0}$ is an approximate normal vector, $w(\vec{u})$
is expected to be one-to-one in a neighborhood of $\vec{x}_{0}$.
We refer to $f(\vec{u})=w(\vec{u})$ as the \emph{local height function}
about $\vec{x}_{0}$. This transformation is important for high-order
surface reconstruction, since it allows reducing the problem to high-order
approximations to the local height function. The vectors $\vec{p}_{u}$
and $\vec{p}_{v}$ are tangent to the surface in the local frame.
 Let $\ell=\left\Vert \vec{p}_{u}\times\vec{p}_{v}\right\Vert =\sqrt{1+\left\Vert \vec{\nabla}f\right\Vert ^{2}}$,
which is the \emph{area measure}. The \emph{unit normal} to the surface
in the local frame is then given by 
\begin{equation}
\hat{\vec{n}}=\frac{\vec{p}_{u}\times\vec{p}_{v}}{\ell}=\frac{1}{\ell}\left[\begin{array}{c}
-\vec{\nabla}f\\
1
\end{array}\right].\label{eq:normal}
\end{equation}
This connection between the normal $\hat{\vec{n}}$ and the gradient
of local height functions will be important for Hermite-style surface
reconstruction.

\subsubsection{\label{subsec:Space-Curves}Space Curves.}

For piecewise smooth surfaces, there can be ridge curves (or feature
curves). These curves are \emph{space curves}, in that they are embedded
in $\mathbb{R}^{3}$ and they may not be coplanar. Similarly, for
an open surface, its boundary curve is a space curve, and it can be
treated in the same fashion as feature curves.

Given a point $\vec{x}_{0}$ on a space curve $\gamma$ in $\mathbb{R}^{3}$,
let $\vec{s}_{0}$ be an approximate tangent vector of unit length.
Let $\vec{m}_{0}$ and $\vec{b}_{0}$ denote a pair of orthonormal
basis functions perpendicular to $\vec{s}_{0}$. The vectors $\vec{s}_{0}$,
$\vec{m}_{0}$, and $\vec{b}_{0}$ form the axes of a local $uvw$
coordinate system at $\vec{x}_{0}$. Let $\vec{Q}_{0}=\left[\vec{s}_{0},\,\vec{m}_{0},\,\vec{b}_{0}\right]$.
Then, any point $\vec{x}$ in the global coordinate system can be
transformed to
\begin{equation}
\vec{p}(u)=\left[u,v(u),w(u)\right]^{T}=\vec{Q}_{0}^{T}(\vec{x}-\vec{x}_{0})\label{eq:point-on-curve}
\end{equation}
in the local frame. We refer to $\vec{f}(u)=[v(u),w(u)]^{T}$ as a
\emph{vector-valued local height function}, of which each component
is one-to-one in a neighborhood of $\vec{x}_{0}$. The \emph{length
measure} is $\ell=\Vert\vec{p}(u)\Vert=\sqrt{1+\left\Vert \vec{f}'(u)\right\Vert ^{2}}$.
The \emph{unit tangent} to the curve in the local frame is then 
\begin{equation}
\hat{\vec{t}}=\frac{\vec{p}}{\ell}=\frac{1}{\ell}\left[\begin{array}{c}
1\\
\vec{f}'(u)
\end{array}\right],\label{eq:curve-tangent}
\end{equation}
which will be important for Hermite-style curve reconstruction.

\subsection{\label{subsec:Local-Polynomial-Fittings.}Local Weighted Least Squares
Fittings.}

Weighted least squares is a powerful method for constructing high-order
polynomial fitting of a smooth function. Let us first derive it for
a function $f(\vec{u}):\mathbb{R}^{2}\rightarrow\mathbb{R}$ at a
given point $\boldsymbol{u}_{0}=\left[0,0\right]^{T}$, where $f$
is the local height function in surface reconstruction. Suppose $f$
is smooth and its value is known only at a sample of $m$ points $\vec{u}_{i}$
near $\vec{u}_{0}$, where $1\leq i\leq m$. We refer to these points
as the \emph{stencil} for the fitting. The 2D Taylor series of $f(\vec{u})$\emph{
}about $\boldsymbol{u}_{0}$ is given by 
\begin{equation}
f(\boldsymbol{u})=\sum_{q=0}^{\infty}\sum_{j,k\ge0}^{j+k=q}c_{jk}u^{j}v^{k},\label{eq:FTaylorseries_cont}
\end{equation}
where $c_{jk}=\dfrac{1}{j!k!}\dfrac{\partial^{j+k}}{\partial u^{j}\partial v^{k}}f(\boldsymbol{0})$.
Suppose $f$ is continuously differentiable to $(p+1)$st order for
some $p>1$. $f(\boldsymbol{u})$ can be approximated to (\emph{$p+1$})st\emph{
}order accuracy about $\boldsymbol{u}_{0}$ as 
\begin{equation}
f(\boldsymbol{u})=\sum_{q=0}^{p}\sum_{j,k\ge0}^{j+k=q}c_{jk}u^{j}v^{k}+\mathcal{O}\left(\Vert\boldsymbol{u}\Vert^{p+1}\right).\label{eq:FTaylorseries_disc}
\end{equation}
The first term in \eqref{eq:FTaylorseries_disc} is the \emph{degree-$p$
Taylor polynomial} about the origin, which has $n=(p+1)(p+2)/2$ coefficients,
namely $c_{jk}$ with $0\leq j+k\leq p$. Assume $m\ge n$, and let
$f_{i}$ denote $f(\vec{u}_{i})$. We then obtain a system of $m$
equations 
\begin{equation}
\sum_{q=0}^{p}\sum_{j,k\ge0}^{j+k=q}c_{jk}u_{i}^{j}v_{i}^{k}\approx f_{i}\label{eq:1}
\end{equation}
for $1\leq i\leq m$. The equation can be written in matrix form as
$\vec{A}\vec{x}\approx\vec{b}$, where $\vec{A}$ is a \emph{generalized
Vandermonde matrix}, $\vec{x}$ is composed of $c_{jk}$, and $\vec{b}$
is composed of $f_{i}$. 

The generalized Vandermonde system obtained from \eqref{eq:1} is
rectangular. In general, it can be solved by posing as a minimization
of the weighted norm of the residual vector $\vec{r}=\vec{b}-\vec{A}\vec{x}$,
i.e.,
\begin{equation}
\min_{\vec{x}}\Vert\vec{r}\Vert_{\vec{W}}\equiv\min_{\vec{x}}\Vert\vec{W}(\vec{A}\vec{x}-\vec{b})\Vert_{2},\label{eq:weighted_norm}
\end{equation}
where $\vec{W}=\mbox{diag}\{\omega_{1},\omega_{2},\dots,\omega_{m}\}$
is diagonal and is referred to as the \emph{weighting matrix}. The
weights in $\vec{W}$ assign priorities to different rows in the generalized
Vandermonde system, where each row corresponds to a different point
in the stencil. If both $\vec{A}$ and $\vec{W}$ are nonsingular
matrices, then $\vec{W}$ has no effect on the solution. However,
if $m\neq n$ or $\vec{A}$ is singular, then different $\vec{W}$
would lead to different solutions. Let $\vec{T}\in\mathbb{R}^{n\times n}$
be a scaling matrix, and we arrive at the least squares problem 
\begin{equation}
\vec{W}\vec{A}\vec{T}\vec{y}\approx\vec{W}\vec{b},\label{eq:least-squares}
\end{equation}
and then $\vec{x}=\vec{T}\vec{y}$. In general, given a weighting
matrix $\vec{W}$, let $\vec{v}_{i}$ denote the $i$th column vector
of $\vec{W}\vec{A}$. We choose
\begin{equation}
\vec{T}=\text{diag}\{1/\Vert\vec{v}_{1}\Vert_{2},\dots,1/\Vert\vec{v}_{n}\Vert_{2}\},\label{eq:column-scaling-matrix}
\end{equation}
which approximately minimizes the condition number; see \cite[p. 265]{Golub13MC}
and \cite{VDS69CNE}. We refer to this as \emph{algebraic column scaling}.
Alternatively, let $h$ be a length measure of the stencil, and one
can scale the coordinates of $u_{i}$ and $v_{i}$ by dividing them
by $h$. We refer to this as \emph{geometric scaling}, which is equivalent
to the algebraic scaling with $\vec{T}$ equal to the diagonal matrix
composed of $h^{-j-k}$, where $j$ and $k$ are the corresponding
indices of $c_{jk}$ in \eqref{eq:1}.

The matrix $\vec{W}\vec{A}\vec{T}$ may still be ill-conditioned even
after scaling. For efficiency and robustness, \eqref{eq:least-squares}
can be solved using a truncated QR factorization with column pivoting
(QRCP). Specifically, let $\tilde{\vec{A}}=\vec{W}\vec{A}\vec{T}$.
The QRCP is
\[
\tilde{\vec{A}}\vec{P}=\vec{Q}\vec{R},
\]
where $\vec{Q}$ is $m\times n$ with orthonormal column vectors,
$\vec{R}$ is an $n\times n$ upper-triangular matrix, $\vec{P}$
is a permutation matrix, and the diagonal entries in $\vec{R}$ are
in descending order \cite{Golub13MC}. For ill-conditioned systems,
the condition number of $\vec{R}$ can be estimated incrementally;
if a column results in a large condition number, its corresponding
monomial should be truncated, so do the other monomials that contain
it as a factor. Let $\tilde{\vec{Q}}$ and $\tilde{\vec{R}}$ denote
the truncated matrices. The final solution of $\vec{x}$ is then given
by
\begin{equation}
\vec{x}=\vec{T}\vec{P}\tilde{\vec{R}}^{-1}\tilde{\vec{Q}}^{T}\vec{W}\vec{b},\label{eq:solution}
\end{equation}
where $\tilde{\vec{R}}^{-1}$ denotes a back substitution step.

The solution vector $\vec{x}$ contains the coefficients $c_{ij}$,
from which we obtain a polynomial $\tilde{f}(\vec{u})=\sum_{q=0}^{p}\sum_{j,k\ge0}^{j+k=q}c_{jk}u^{j}v^{k}$.
We refer to this approach for estimating the Taylor polynomial as\emph{
local WLS fitting. }Note that if $\vec{u}_{0}$ is a point in the
stencil, we can force $\tilde{f}$ to be interpolatory at $\vec{u}_{0}$,
i.e., $\tilde{f}(\boldsymbol{u}_{0})=f(\boldsymbol{u}_{0})$, by setting
$c_{00}=0$ and removing the equation corresponding to $\vec{u}_{0}$.
This reduces the problem to an $(m-1)\times(n-1)$ linear system,
and it tends to be more accurate if the function $f$ is known to
be accurate at $\vec{u}_{0}$.

\subsection{\label{subsec:WLS-Tri-Surf}Local Fittings on Triangulated Surfaces.}

The local WLS fittings can be used in local high-order reconstructions
of a triangulated surface, on which a feature curve is composed of
a collection of edges. This requires three key components: 1) construct
a local frame, 2) select a proper stencil about the vertex, and 3)
define the weighting scheme.

To define the local frame at a point $\vec{x}_{0}$ on a surface,
we need an approximate surface normal $\vec{m}_{0}$ at the point,
which can be obtained by averaging the normals to its adjacent faces.
Such an averaging is in general first-order accurate, which suffices
for this purpose. Similarly, for a curve, we need an approximate tangent
vector $\vec{s}_{0}$ at a point, which can be obtained by averaging
the tangents to its adjacent edges.

For the stencil selection, it is simple and efficient to use mesh
connectivity. For curves, as long as the points are distinct, it suffices
to make the number of points to be equal to the number of coefficients,
but $m>n$ may lead to better error cancellation for even-degree polynomials
with nearly symmetric stencils. For a triangular mesh, we define $\mathit{k}$-ring
neighborhoods, with half-ring increments, as follows:
\begin{itemize}
\item The\emph{ 1-ring neighbor faces} of a vertex $v$ are the faces incident
on $v$, and the \emph{1-ring neighbor vertices} are the vertices
of these faces. 
\item The\emph{ 1.5-ring neighbor faces} are the faces that share an edge
with a 1-ring neighbor face, and the \emph{1.5-ring neighbor vertices
}are the vertices of these faces. 
\item For an integer $k\geq1$, the\emph{ $(k+1)$-ring neighborhood} of
a vertex is the union of the 1-ring neighbors of its $k$-ring neighbor
vertices, and the\emph{ $(k+1.5)$-ring neighborhood} is the union
of the $1.5$-ring neighbors of the\emph{ $k$-}ring neighbor vertices.
\end{itemize}
Figure~\ref{fig:Stencil-for-point} illustrates this definition up
to 2.5 rings. In general, for degree-$p$ fitting, we use the $(p+1)/2$-ring
for accurate input. For a curve, the $\mathit{k}$-ring neighborhood
can be defined similarly for an integer $k$, and we use $\left\lceil (p+1)/2\right\rceil $-ring
for degree-$p$ fitting. We adaptively enlarge the ring size if there
are too few points or the input is relatively noisy. This approach
is efficient since it takes constant-time per vertex with a proper
data structure, such as the half-facet (or the half-edge) data structure
\cite{DREJTAHF2014}. However, if the mesh is poor shared, the stencil
may be highly skewed, which can be mitigated with a proper weighting
scheme.

\begin{figure}
\begin{centering}
\includegraphics[scale=0.6]{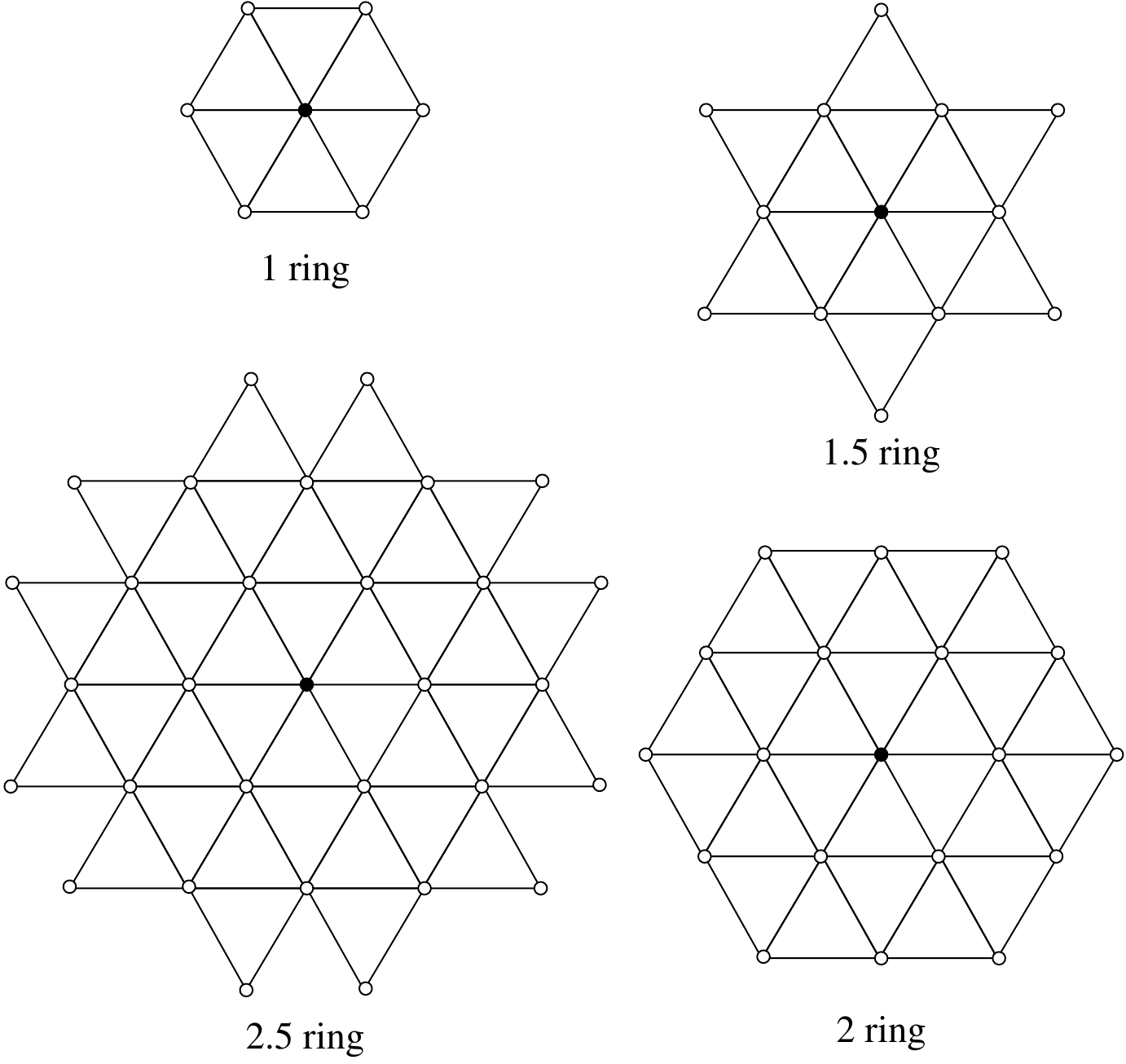}
\par\end{centering}
\caption{\label{fig:Stencil-for-point}Illustration of $k$-ring and $k.5$-ring
neighborhoods for stencil selections.}
\end{figure}

There are many options to define the weighting matrix $\vec{W}$ in
\eqref{eq:weighted_norm}. A commonly used weighting scheme is the
so-called \emph{inverse distance weighting} and its variants. The
standard inverse-distance weighting assigns $\omega_{i}=1/\Vert\vec{x}_{i}-\vec{x}_{0}\Vert^{q}$
to some $q$th power. This weighting scheme assigns smaller weights
for points that are farther away from the origin. However, the inverse
distance has a singularity if $\vec{x}_{i}$ is too close to $\vec{x}_{0}$.
This singularity can be resolved by safeguarding the denominator with
some small $\epsilon$. For coarse meshes or surfaces with sharp features,
it is desirable to use a small and even zero weight for $\vec{x}_{i}$
if its (approximate) normal $\vec{m}_{i}$ deviates too much from
$\vec{m}_{0}$. Let $\theta_{i}^{+}\equiv\textrm{max}(0,\vec{m}_{i}^{T}\vec{m}_{0})$.
We then arrive at the weight
\begin{equation}
\omega_{i}=\theta_{i}^{+}\left/\left(\sqrt{\Vert\vec{u}_{i}-\vec{u}_{0}\Vert^{2}+\epsilon}\right)^{q}\right.,\label{eq:normal-safeguarded-inverse-distance}
\end{equation}
where $q=p/2$ and $\epsilon=0.1$ in \cite{Jiao2008}. The factor
$\theta_{i}^{+}$ serves as a safeguard for discontinuous surface
or very coarse meshes. Similarly, given a piecewise linear curve,
let $\theta_{i}^{+}\equiv\textrm{max}(0,\vec{s}_{i}^{T}\vec{s}_{0})$,
where $\vec{s_{i}}$ denote the approximate unit tangent at $\vec{x}_{i}$,
and the same weighting scheme applies.

The inverse-distance-weighting scheme tends to give much higher weights
to points closest to $\vec{x}_{0}$, especially if $\epsilon$ is
close to zero. If $\vec{x}_{0}$ is not at a vertex, the vertices
closest to $\vec{x}_{0}$ tend to be highly asymmetric about $\vec{x}_{0}$.
In this case, it is desirable to use a weighting scheme that is flatter
about the origin while being smooth and compact. A class of such functions
is due to Wendland \cite{wendland1995piecewise}. We shall consider
three of these functions: 
\begin{align}
\psi_{3,1}(r) & =(1-r)_{+}^{4}(4r+1),\label{eq:Wendland_phi_3_1}\\
\psi_{4,2}(r) & =(1-r)_{+}^{6}(35r^{2}+18r+3),\label{eq:Wendland_phi_4_2}\\
\psi_{5,3}(r) & =(1-r)_{+}^{8}(32r^{3}+25r^{2}+8r+1),\label{eq:Wendland_phi_5_3}
\end{align}
where $(1-r)_{+}\equiv\max\{0,1-r\}$. In Figure~\ref{fig:Wendland-functions},
the left panel shows these functions, while the right panel shows
the scaled functions so that their maximum values are all ones. In
this paper, we will combine these Wendland functions with $\theta_{i}^{+}$
as weighting functions for both surfaces and curves; see Sections~\ref{subsec:weighting-scheme}
and \ref{subsec:H-CMF-curve} for more detail.

\begin{figure}
\begin{centering}
\includegraphics[width=0.45\textwidth]{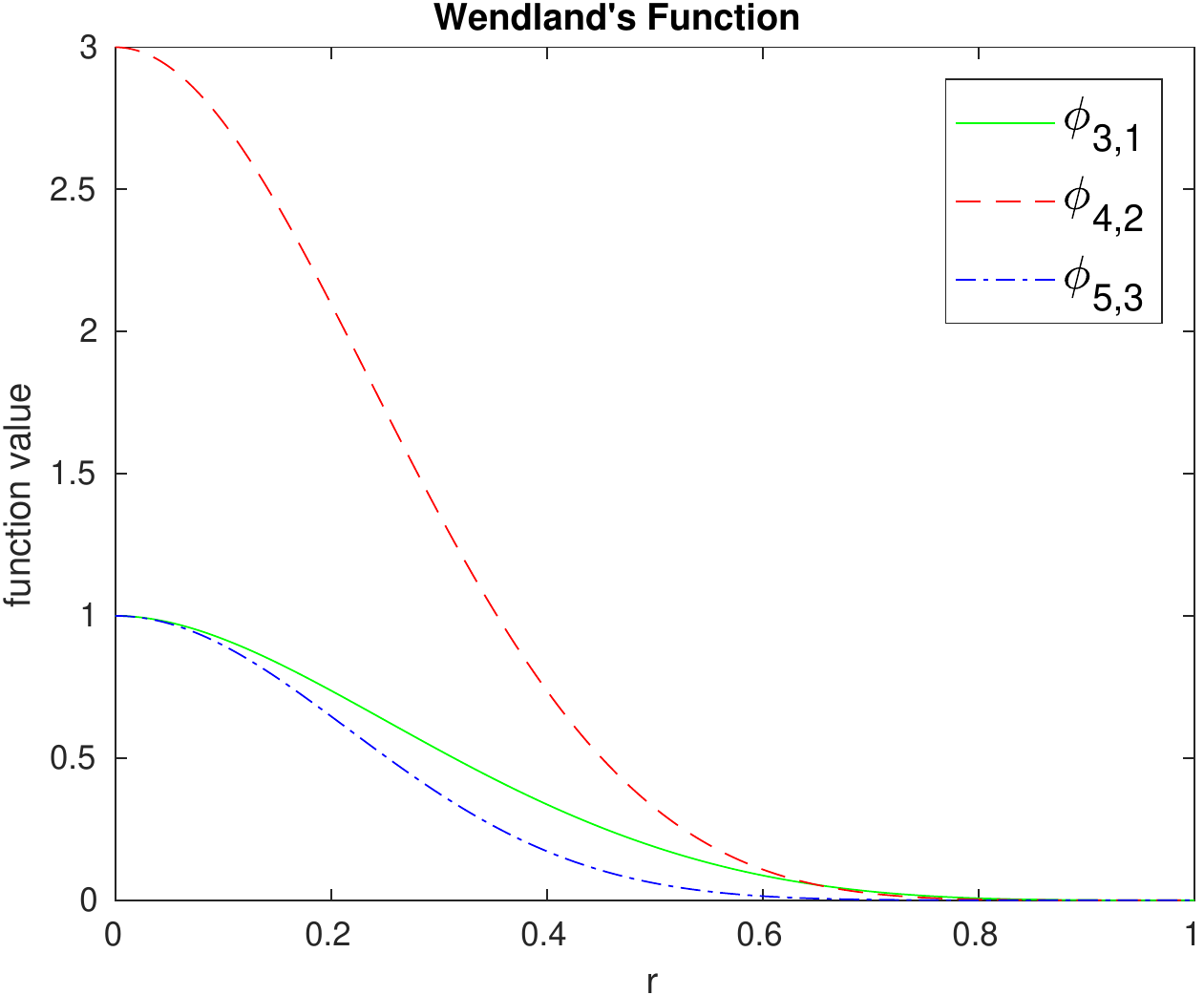}\hfill\includegraphics[width=0.45\textwidth]{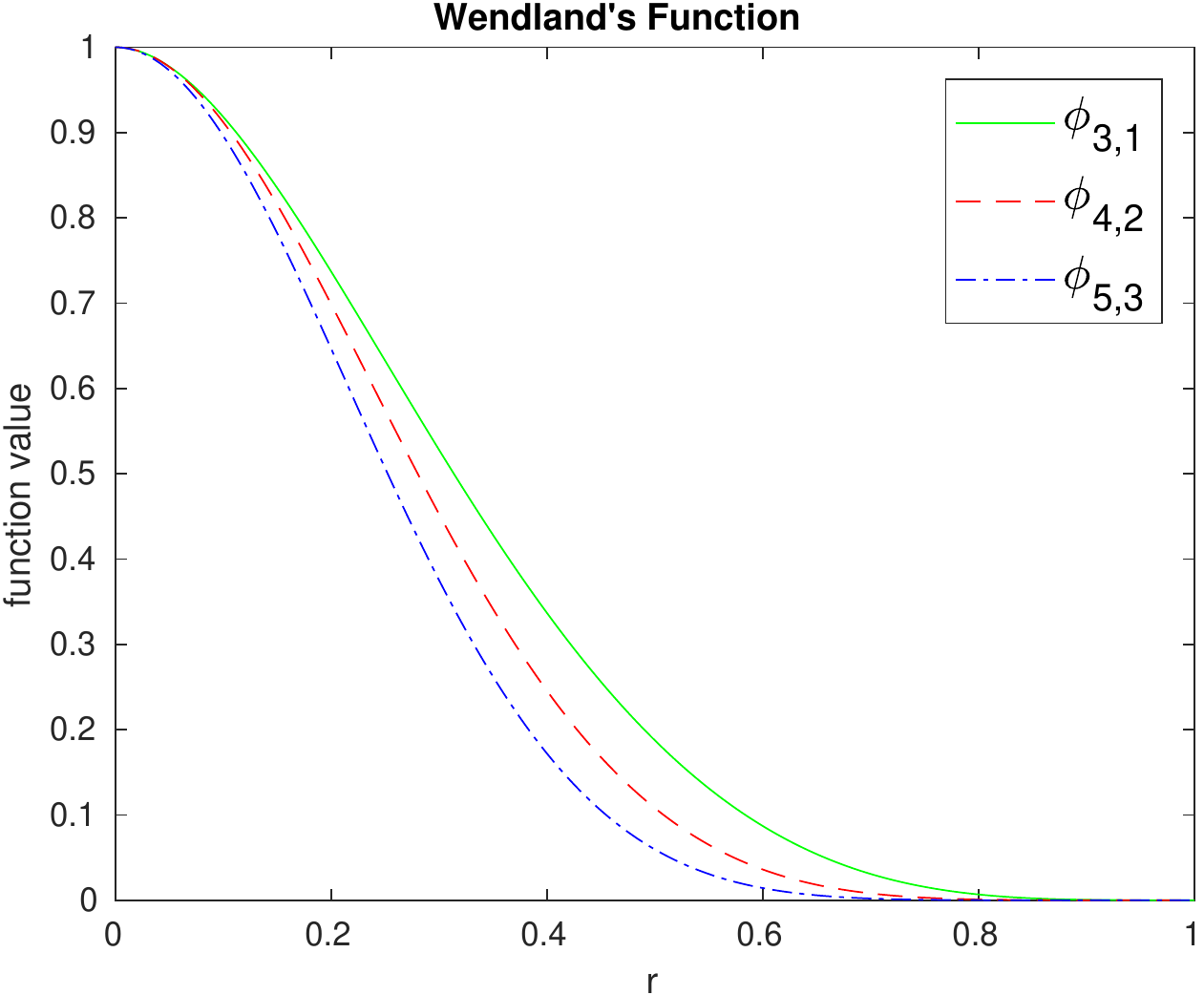}
\par\end{centering}
\caption{\label{fig:Wendland-functions}Wendland's functions before and after
scaling.}
\end{figure}

With these three components, we can apply local WLS fittings to construct
a local surface patch at an arbitrary point of a triangulated surface
or a piecewise linear curve. More specifically, consider a point $\vec{p}$
on a triangle $\vec{x}_{1}\vec{x}_{2}\vec{x}_{3}$. For each vertex
$\vec{x}_{j}$, we find its $k$-ring neighborhood $S_{j}$. If $\vec{p}$
is on the edge $\vec{x}_{j_{1}}\vec{x}_{j_{2}}$, we use $S_{j_{1}}\bigcup S_{j_{2}}$
as the stencil; if $\vec{x}_{0}$ is in the interior of the triangle,
we use $\bigcup_{j=1}^{3}S_{j}$ as the stencil. To build the local
frame, we take $\sum_{j=1}^{3}\xi_{j}\vec{m}_{j}$ as an approximate
normal, where $\vec{m}_{j}$ is the approximate normal at $\vec{x}_{j}$
and $\xi_{j}$ is the barycentric coordinates of $\vec{p}$ in the
triangle. This construction ensures the local frames change continuously
from point to point, and hence it is referred to as the \emph{Continuous
Moving Frames} (\emph{CMF}) method \cite{Jiao2011RHO}. If the input
vertices approximate a smooth surface $\Gamma$ with an error of $\mathcal{O}(h^{p+1})$,
it can be shown that the CMF reconstruction with degree-$p$ polynomials
can achieve $\mathcal{O}(h^{p+1})$ accuracy, where $h$ is proportional
to the radius of the stencil. For even-degree polynomials, the error
may be $\mathcal{O}(h^{p+2})$ for symmetric stencils due to error
cancellation. For this reason, it is in general advantageous to use
even-degree polynomials.

\subsection{\label{subsec:WALF}Weighted Averaging of Local Fittings}

The local WLS fittings and CMF do not necessarily produce a $G^{0}$
continuous surface. One approach to recover $G^{0}$ continuity is
\emph{Weighted Averaging of Local Fittings} (\emph{WALF}) \cite{Jiao2011RHO},
which computes a weighted average of the local fittings at the vertices,
where the weights are the barycentric coordinates. For example, consider
a triangle with vertices $\vec{x}_{j}$, $j=1,2,3$, and an arbitrary
point $\vec{p}$ in the triangle. For each vertex $\vec{x}_{j}$,
a point $\vec{q}_{j}$ is obtained for $\vec{p}$ from the corresponding
local fitting within its own local coordinate system. Let $\xi_{j}$,
$j=1,2,3$ denote the barycentric coordinates of $\vec{p}$ within
the triangle. Then, $\vec{q}=\sum_{j=1}^{3}\xi_{j}\vec{q}_{j}$ is
the WALF reconstruction for $\vec{p}$. A similar construction also
applies to curves. Figure~\ref{fig:weighted_averaging} shows a 2-D
illustration of this construction. For a smooth surface, WALF constructs
a $G^{0}$ continuous surface, due to the $C^{0}$ continuity of finite-element
interpolation. It can be shown that if the input vertices approximate
a smooth surface $\Gamma$ with an error of $\mathcal{O}(h^{p+1})$,
then WALF reconstruction with degree-$p$ polynomials can achieve
$\mathcal{O}(h^{p+1}+h^{6})$ accuracy for $p\leq6$ in terms of the
shortest distance to the true surface \cite{Jiao2011RHO}. 
\begin{figure}
\begin{centering}
\includegraphics[height=1.5in]{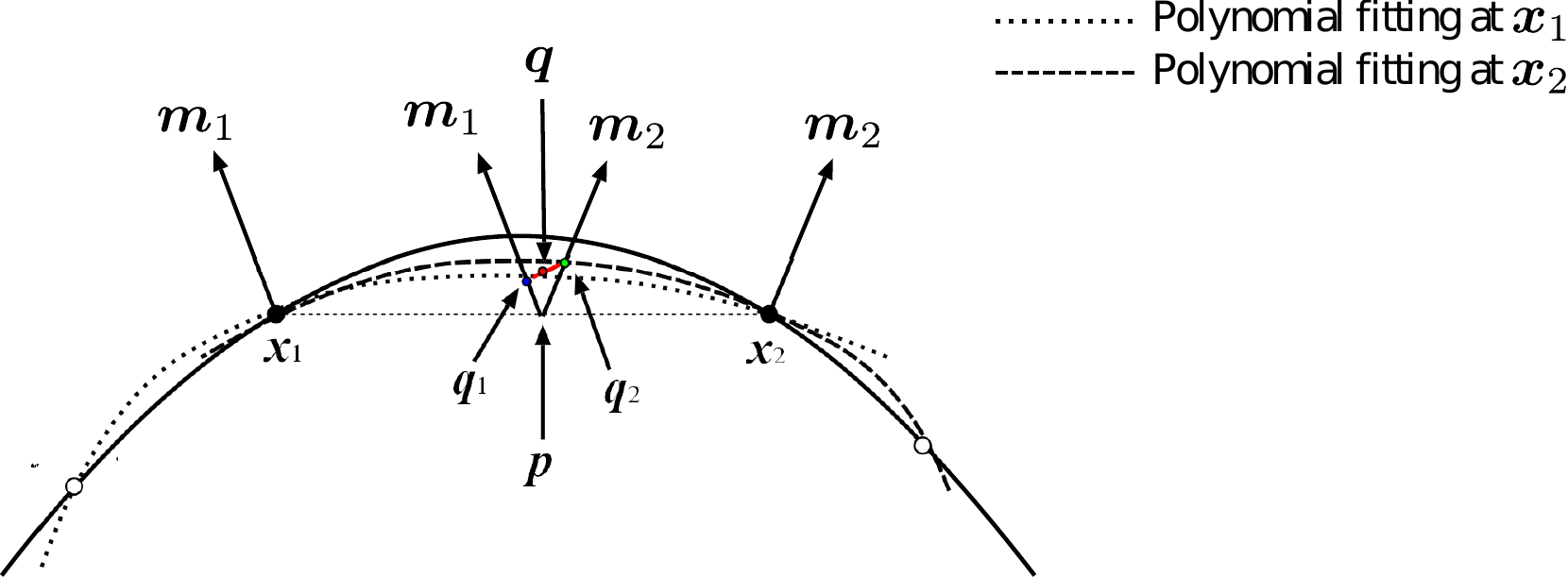}
\par\end{centering}
\caption{\label{fig:weighted_averaging}2-D illustration of WALF. The solid
curve indicates the exact curve. The dashed and dotted curves indicate
the fittings at $\vec{x}_{1}$ and $\vec{x}_{2}$, respectively. $\vec{q}$
is the WALF reconstruction for point $\vec{p}$, computed as a weighted
average of $\vec{q}_{1}$ and $\vec{q}_{2}$ from the fittings at
$\vec{x}_{1}$ and $\vec{x}_{2}$, respectively.}
\end{figure}

\subsection{\label{subsec:High-Order-Parametric-Elements}High-Order Parametric
Surface Elements}

Besides WALF, another approach to obtain a $G^{0}$ continuous surface
is to use high-degree piecewise polynomial interpolation, as in high-order
finite-element methods. Specifically, for each triangle in the input
mesh, one can construct a degree-$p$ surface patch from $n=(p+1)(p+2)/2$
points, including the three corner nodes in the original triangle,
along with additional \emph{mid-edge nodes }and\emph{ mid-face nodes}.
Let $\{\vec{\xi}_{i}\}$ denote the natural coordinates of $\vec{x}_{i}$
in the reference space, which is typically chose as the right triangle
with vertices $\vec{\xi}_{1}=[0,0]$, $\vec{\xi}_{2}=[1,0]$, and
$\vec{\xi}_{3}=[0,1]$. Let $\vec{x}_{i}$ denote the coordinate of
the $i$th node of the element. The degree-$p$ surface patch is then
defined by
\begin{equation}
\vec{x}(\vec{\xi})=\sum_{i=1}^{n}N_{i}(\vec{\xi})\vec{x}_{i},\label{eq:param-surf-element}
\end{equation}
where the $N_{i}(\vec{\xi})$ are the Lagrange polynomial basis of
degree-$p$ interpolation within the reference space, also know as
the \emph{shape function} of the degree-$p$ element. We refer to
such a degree-$p$ triangular patch as a \emph{parametric surface
element}. It is commonly used in defining the geometry in high-order
finite element methods, where $\vec{x}_{i}$ are sampling points on
the exact geometry if a CAD is given. In the context of high-order
reconstructions, the $\vec{x}_{i}$ corresponding to the mid-edge
and mid-face nodes can be obtained from degree-$p$ CMF or WALF \cite{Jiao2011RHO}.

A key question in constructing a parametric surface element is the
placement of the mid-edge and mid-face nodes. This includes two aspects:
the selection of $\vec{\xi}_{i}$ for the mid-edge and mid-face nodes,
and the placement of $\vec{x}_{i}$ based on $\vec{\xi}_{i}$. Traditionally,
the $\vec{\xi}_{i}$ are equally spaced in the reference space, as
illustrated in Figure~\ref{fig:iso-point} for degree-2, 4, and 6
triangles. For high-degree interpolation, such equally space points
lead to ill-conditioned Vandermonde matrices and hence unstable Lagrange
basis functions. For very high-degree interpolation, it is desirable
to use nonuniform nodes that resemble the distributions of the Chebyshev
points in 1-D for better stability. There are various choices for
such points; see e.g. \cite{rapetti2012generation}. Among them, the
Lebesgue-Gauss-Lobatto symmetric (LEBGLS) points, because they approximately
minimize the condition number of the interpolation (a.k.a., the Lebesgue
constant in the polynomial interpolation theory), and the points have
a three-way symmetry. Hence, they are well suited for high-order surface
reconstruction over triangular meshes. Figure~\ref{fig:leb-points}
shows the LEBGLS points for degree-2, 4, and 6 triangles. Note that
the degree-2 LEBGLS points are equally spaced, but those of higher-degrees
tend to be more clustered toward the edges and corners. Given the
points $\vec{\xi}_{i}$, the positioning of $\vec{x}_{i}$ requires
special attention, especially near features, so that the derivatives
of $\vec{x}(\vec{\xi})$ defined by \eqref{eq:param-surf-element}
are uniformally bounded up to order $p+1$. We will address it in
Section~\ref{sec:Feature-Aware-HOSR}.

\begin{figure}
\centering{}\subfloat[degree 2]{\raggedright{}\includegraphics[width=0.33\columnwidth]{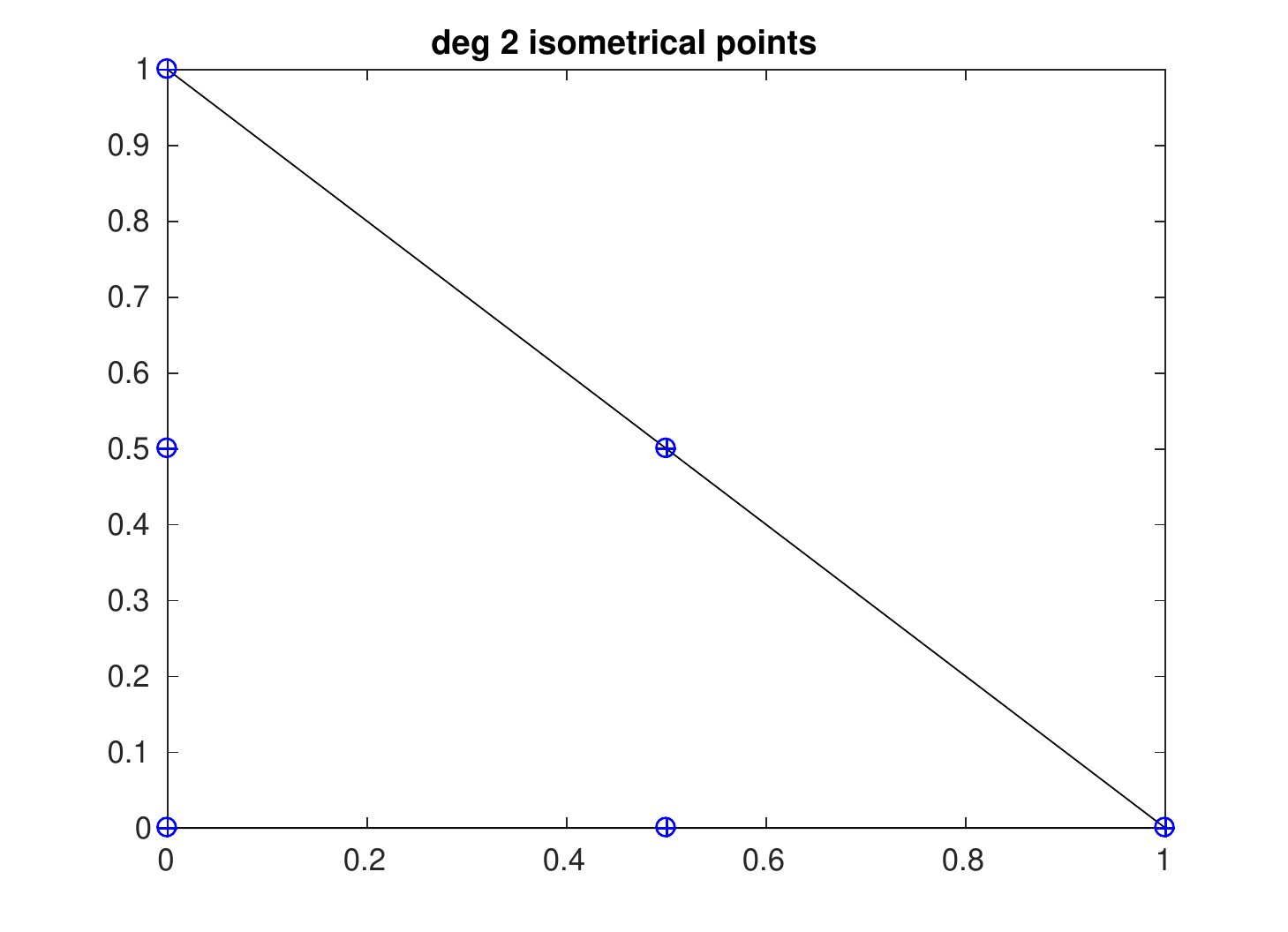}}\subfloat[degree 4]{\raggedright{}\includegraphics[width=0.33\columnwidth]{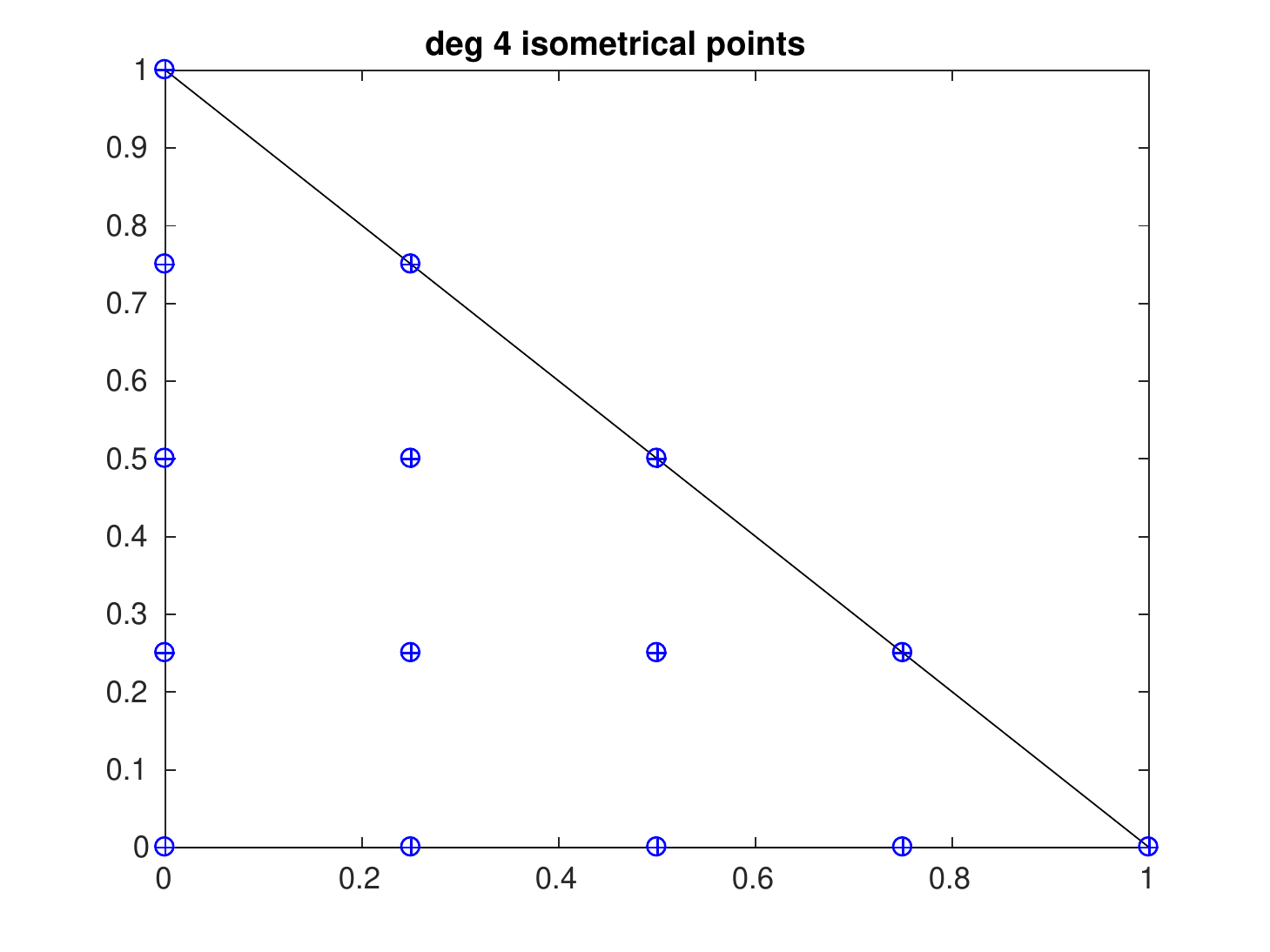}}\subfloat[degree 6]{\includegraphics[width=0.33\columnwidth]{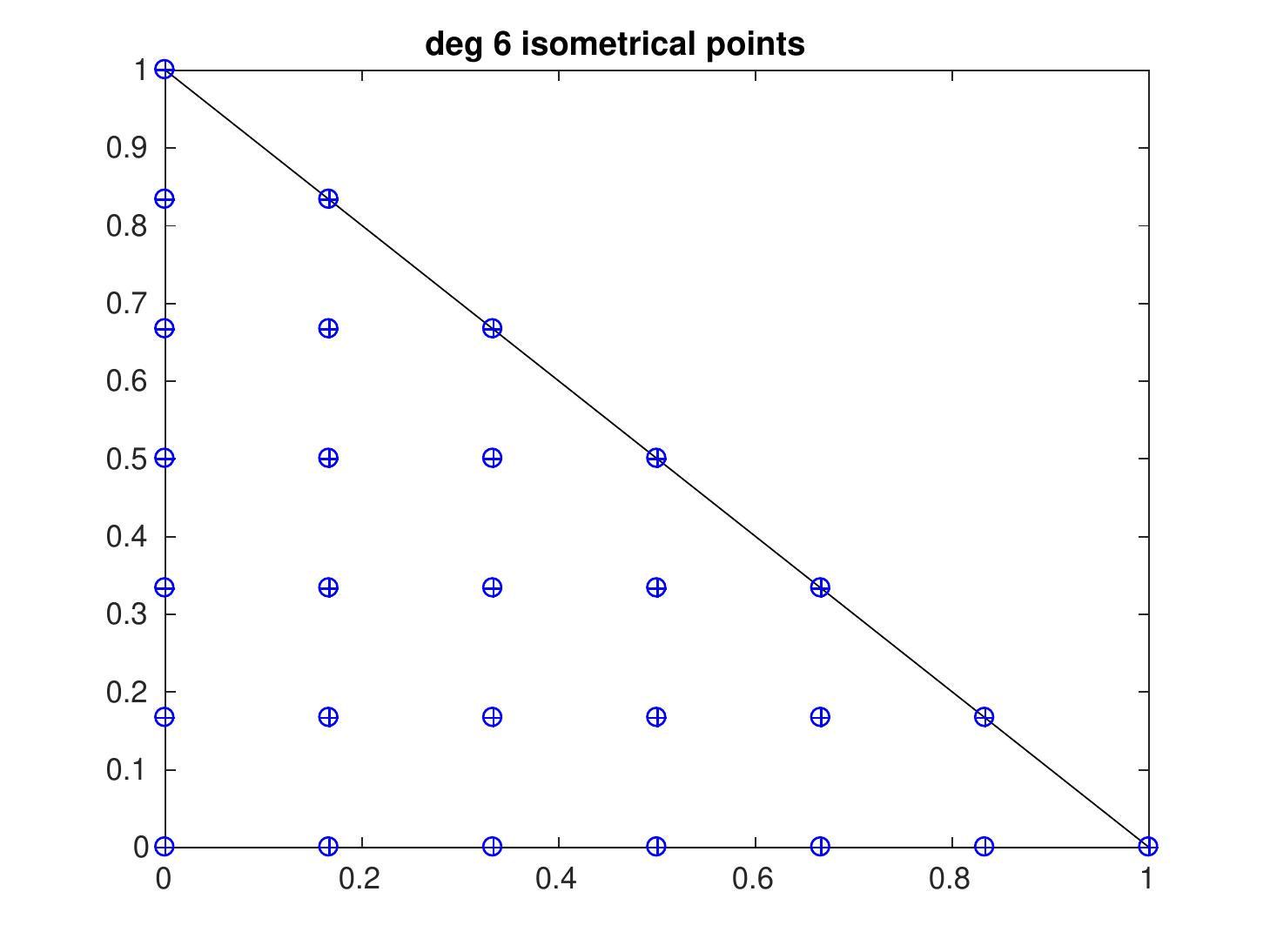}}\caption{\label{fig:iso-point}Parametric triangular elements with equally-spaced
points in the reference space.}
\end{figure}

\begin{figure}
\centering{}\subfloat[degree 2]{\includegraphics[width=0.33\textwidth]{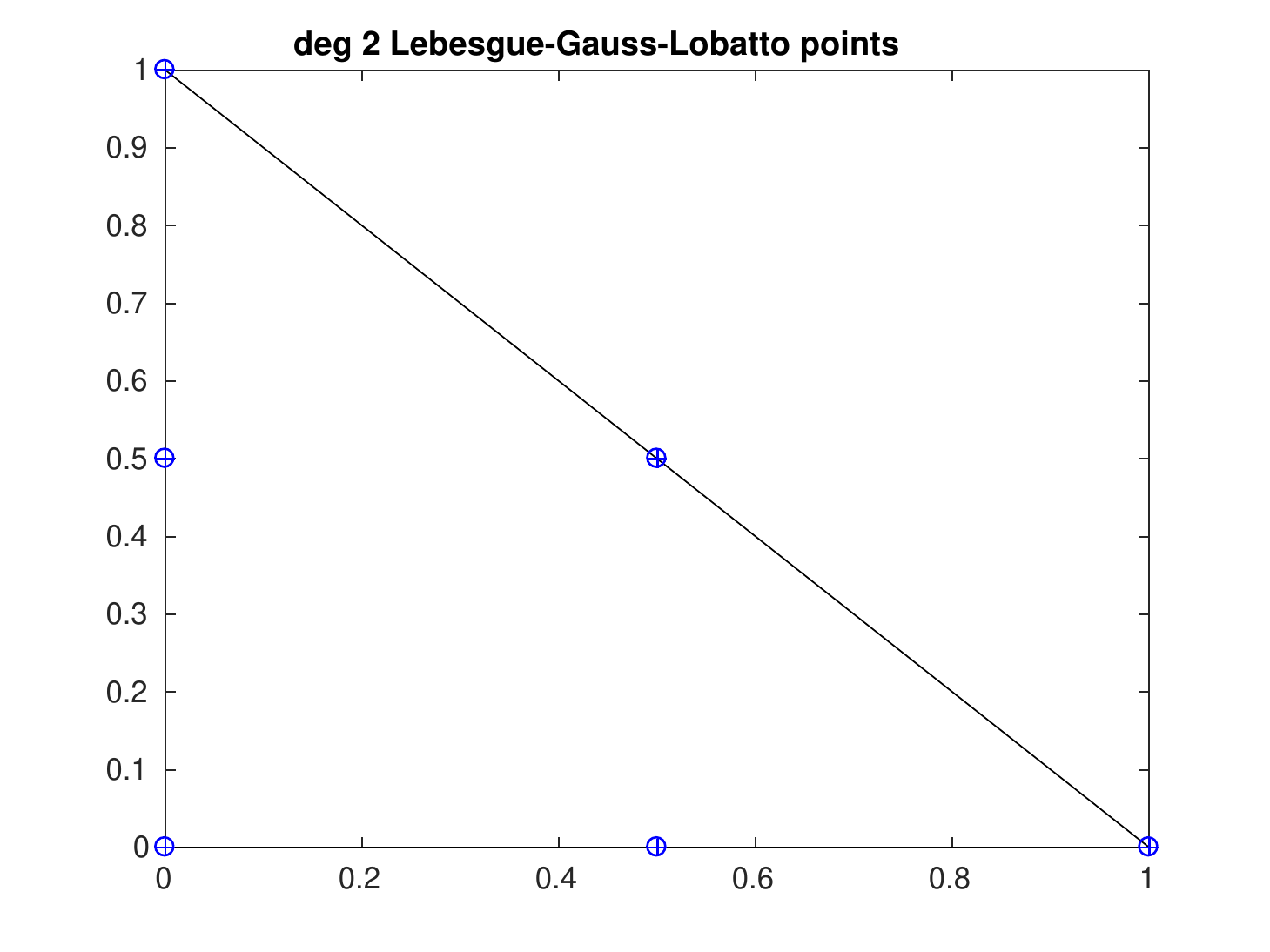}}\subfloat[degree 4]{\includegraphics[width=0.33\columnwidth]{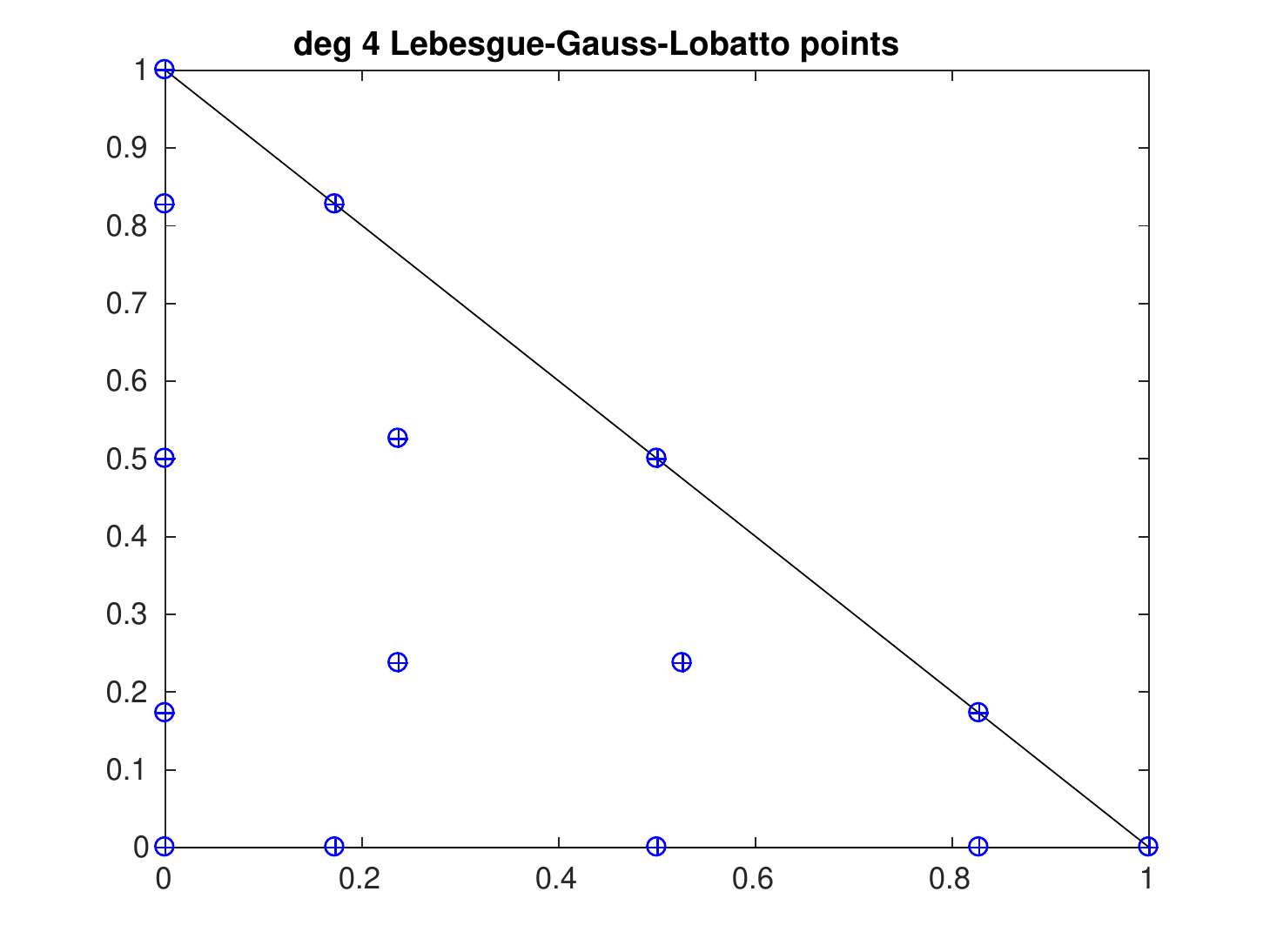}}\subfloat[degree 6]{\includegraphics[width=0.33\columnwidth]{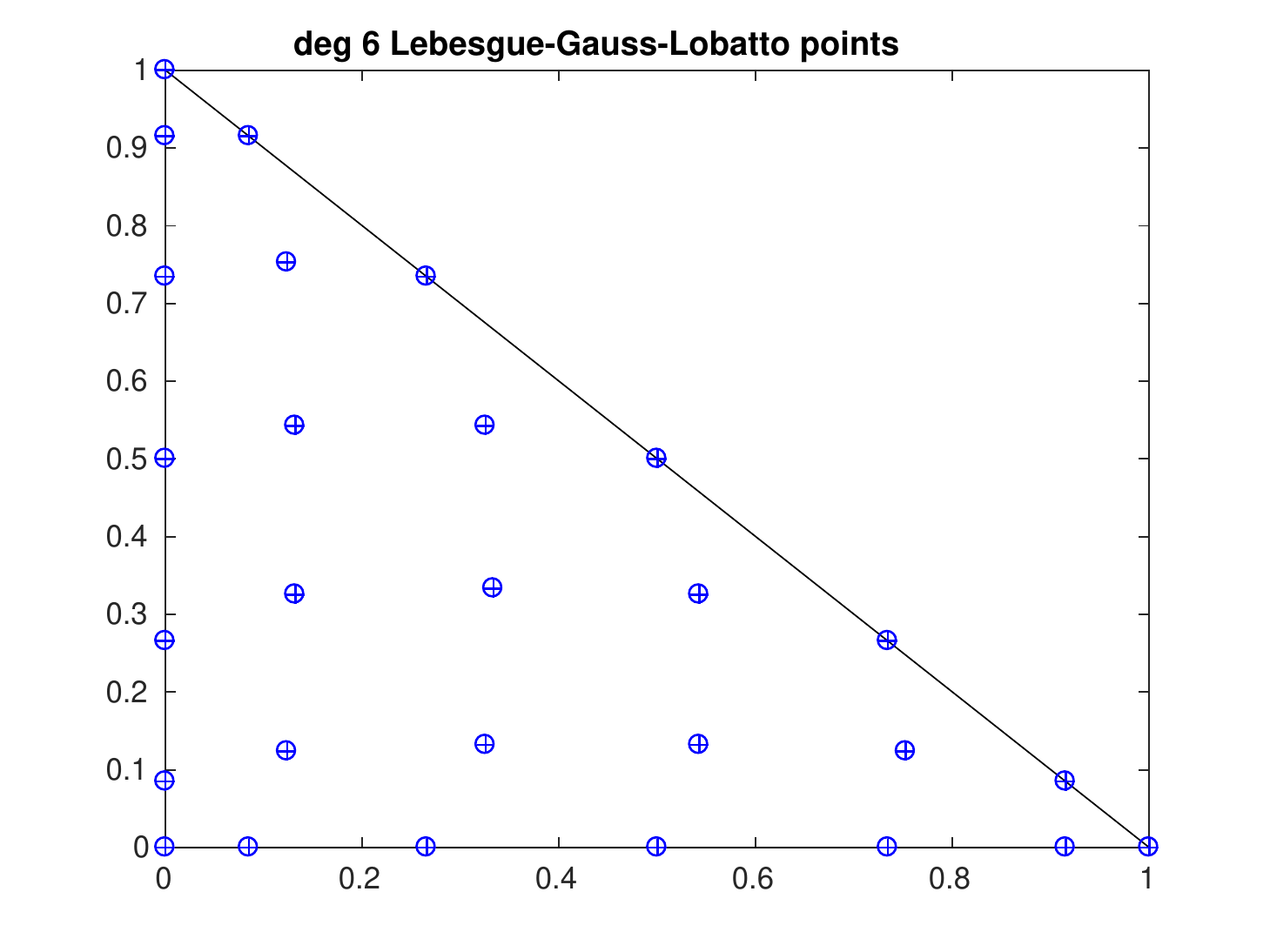}}\caption{\label{fig:leb-points}Parametric triangular elements with Lebesgue-Gauss-Lobatto
points.}
\end{figure}

\section{\label{sec:HWALF-surface}Hermite-Style High-Order Surface Reconstruction }

The CMF and WALF methods in \cite{Jiao2011RHO} had two main limitations.
First, the methods may be inaccurate for relatively coarse meshes,
for which the stencils may not have sufficient points and the safeguards
would likely reduce the degree of the basis functions. Second, they
do not guarantee $G^{0}$ continuity near sharp features (such as
ridges and corners), which might be present in piecewise smooth surfaces.
In this section, we address the first issue by extending CMF and WALF
to include normal-based information, similar to Hermite interpolation.
We refer to this as the \emph{Hermite-style reconstruction}, and refer
to its integrations with CMF and WALF as \emph{Hermite-style CMF}
(or \emph{H-CMF}) and \emph{Hermite-style WALF} (or \emph{H-WALF}),
respectively. We will describe these methods, including the selection
of stencils and weighting schemes as well as the analysis of their
accuracy.

\subsection{\label{subsec:Hermite-Style-Surface}Hermite-Style Polynomial Fittings}

\subsubsection{Local Polynomial Fittings.}

Given a point $\vec{x}_{i}$ on a smooth surface $\Gamma$, let $\vec{n}_{i}$
denote an accurate normal to $\Gamma$ at $\vec{x}_{i}$. Note that
unlike $\vec{m}_{i}$ in Section~\ref{sec:Background}, which only
needed to be first order, $\boldsymbol{n_{i}}$ should be at least
$p$th order accurate for degree-$p$ fittings, as we will show in
Section~\ref{subsec:Accuracy-Hermite}. Let $\vec{Q}_{0}^{T}\boldsymbol{n}=\left[\alpha_{i},\,\beta_{i},\,\gamma_{i}\right]^{T}$,
where $\vec{Q}_{0}$ was defined in Section~\ref{subsec:Smooth-Surfaces}.
From \eqref{eq:normal}, we have 
\begin{align}
\gamma_{i}f_{u}(\boldsymbol{u}_{i}) & =-\alpha_{i},\label{eq:deriv_u}\\
\gamma_{i}f_{v}(\boldsymbol{u}_{i}) & =-\beta_{i}.\label{eq:deriv_v}
\end{align}
From the Taylor series expansion of $f(\boldsymbol{u})$ in \eqref{eq:FTaylorseries_disc},
we have 
\begin{align}
f_{u}(\boldsymbol{u}_{i}) & \approx\sum_{q=0}^{p}\sum_{j,k\ge0}^{j+k=q}c_{jk}ju_{i}^{j-1}v_{i}^{k}\approx-\frac{\alpha_{i}}{\gamma_{i}},\label{eq:f_u}\\
f_{v}(\boldsymbol{u}_{i}) & \approx\sum_{q=0}^{p}\sum_{j,k\ge0}^{j+k=q}c_{jk}ku_{i}^{j}v_{i}^{k-1}\approx-\frac{\beta_{i}}{\gamma_{i}}.\label{eq:f_v}
\end{align}
These two equations along with the point-based approximation in \eqref{eq:1}
lead to the following linear system 
\begin{equation}
\boldsymbol{U}\vec{c}\approx\boldsymbol{f},\label{eq:unscaled-Hermite}
\end{equation}
where $\vec{c}$ is an $n$-vector composed of $c_{jk}$, $\boldsymbol{U}$
is a $3m\times n$ matrix, and $\boldsymbol{f}$ is a $3m$-vector.
For example, a degree-2 fitting with $m$ points in the stencil results
in the following $\boldsymbol{U}$ and $\boldsymbol{f}$: 
\[
\boldsymbol{U}=\left[\begin{array}{cccccc}
1 & u_{1} & v_{1} & u_{1}^{2} & u_{1}v_{1} & v_{1}^{2}\\
1 & u_{2} & v_{2} & u_{2}^{2} & u_{2}v_{2} & v_{2}^{2}\\
 & \cdots &  &  & \cdots\\
1 & u_{m} & v_{m} & u_{m}^{2} & u_{m}v_{m} & v_{m}^{2}\\
0 & 1 & 0 & 2u_{1} & v_{1} & 0\\
0 & 1 & 0 & 2u_{2} & v_{2} & 0\\
 & \cdots &  &  & \cdots\\
0 & 1 & 0 & 2u_{m} & v_{m} & 0\\
0 & 0 & 1 & 0 & u_{1} & 2v_{1}\\
0 & 0 & 1 & 0 & u_{2} & 2v_{2}\\
 & \cdots &  &  & \cdots\\
0 & 0 & 1 & 0 & u_{m} & 2v_{m}
\end{array}\right]\quad\text{and}\quad\boldsymbol{f}=\left[\begin{array}{c}
f_{1}\\
f_{2}\\
\vdots\\
f_{m}\\
-\alpha_{1}/\gamma_{1}\\
-\alpha_{2}/\gamma_{2}\\
\vdots\\
-\alpha_{m}/\gamma_{m}\\
-\beta_{1}/\gamma_{1}\\
-\beta_{2}/\gamma_{2}\\
\vdots\\
-\beta_{m}/\gamma_{m}
\end{array}\right].
\]
Like the regular fitting, the Hermite-style fitting is interpolatory
at $\vec{x}_{0}$ if the local polynomial passes through the origin
of fittings, i.e.,$f(\boldsymbol{u}_{0})=0$.

Note that if $\gamma_{i}\leq0$, the local height function $f(\boldsymbol{u})$
would have foldings about $\boldsymbol{x}_{i}$, which can lead to
non-convergence. This issue could be avoided by settings the weights
for the folded vertices to zero so that they will be eliminated from
the linear system; see Section~\ref{subsec:weighting-scheme}. Also
note that for surfaces with sharp features, the normals along ridges
and at corners are not well-defined. In these cases, we can either
use one-sided normals at each vertex on sharp features or do not include
the normal information for those vertices. In the following, we shall
assume the surface is smooth unless otherwise noted, so that the normal
is well-defined at all the vertices in the stencil; we will address
the reconstruction of feature curves in Section~\ref{sec:HWALF-curve}.

\subsubsection{\label{subsec:geom_scaling_surf}Coordinate Transformation Based
on Geometric Scaling.}

Due to the inclusion of normals into the Hermite-style fittings, the
rows in \eqref{eq:unscaled-Hermite} now have mixed-degree terms.
To normalize the entries in matrix $\boldsymbol{U}$, we apply geometric
scaling similar to that described in Section~\ref{subsec:Local-Polynomial-Fittings.}.
Specifically, let $\mu=\dfrac{u}{h}$ and $\nu=\dfrac{v}{h}$, where
$h$ is a measure of local edge length. From the chain rule, we have

\begin{equation}
\dfrac{\partial f^{j+k}}{\partial\mu^{j}\partial\nu^{k}}=\dfrac{\partial f^{j+k}}{\partial u^{j}\partial v^{k}}h^{j+k}.\label{eq:transformation}
\end{equation}
Let $x_{jk}=h^{j+k}c_{ij}=\dfrac{1}{j!k!}\dfrac{\partial f^{j+k}(\vec{u}_{0})}{\partial\mu^{j}\partial\nu^{k}}$,
$\mu_{i}=\dfrac{u_{i}}{h}$, and $\nu_{i}=\dfrac{v_{i}}{h}$ for $0\leq i\leq m$.
From the Taylor series expansion of $f_{u}$ and $f_{v}$ in \eqref{eq:f_u}
and \eqref{eq:f_v} , we obtain
\begin{align}
\sum_{q=0}^{p}\sum_{j,k\ge0}^{j+k=q}x_{jk}\mu_{i}^{j}\nu_{i}^{k} & \approx f_{i},\\
\sum_{q=0}^{p}\sum_{j,k\ge0}^{j+k=q}x_{jk}j\mu_{i}^{j-1}\nu_{i}^{k} & \approx-\frac{\alpha_{i}}{\gamma_{i}}h,\\
\sum_{q=0}^{p}\sum_{j,k\ge0}^{j+k=q}x_{jk}k\mu_{i}^{j}\nu_{i}^{k-1} & \approx-\frac{\beta_{i}}{\gamma_{i}}h.
\end{align}
This results in a rescaled linear system 
\begin{equation}
\boldsymbol{V}\vec{x}\approx\boldsymbol{g},\label{eq:rescaled-linear-system}
\end{equation}
where $\vec{x}$ is an $n$-vector composed of $x_{jk}$, $\boldsymbol{V}$
is a $3m\times n$ matrix, and $\boldsymbol{g}$ is a $3m$-vector.
For example, a degree-2 fitting results in the following $\boldsymbol{V}$
and $\boldsymbol{g}$: 
\begin{equation}
\boldsymbol{V}=\left[\begin{array}{cccccc}
1 & \mu_{1} & \nu_{1} & \mu_{1}^{2} & \mu_{1}\nu_{1} & \nu_{1}^{2}\\
1 & \mu_{2} & \nu_{2} & \mu_{2}^{2} & \mu_{2}\nu_{2} & \nu_{2}^{2}\\
 & \cdots &  &  & \cdots\\
1 & \mu_{m} & \nu_{m} & \mu_{m}^{2} & \mu_{m}\nu_{m} & \nu_{m}^{2}\\
0 & 1 & 0 & 2\mu_{1} & \nu_{1} & 0\\
0 & 1 & 0 & 2\mu_{2} & \nu_{2} & 0\\
 & \cdots &  &  & \cdots\\
0 & 1 & 0 & 2\mu_{m} & \nu_{m} & 0\\
0 & 0 & 1 & 0 & \mu_{1} & 2\nu_{1}\\
0 & 0 & 1 & 0 & \mu_{2} & 2\nu_{2}\\
 & \cdots &  &  & \cdots\\
0 & 0 & 1 & 0 & \mu_{m} & 2\nu_{m}
\end{array}\right]\quad\text{and}\quad\boldsymbol{g}=\left[\begin{array}{c}
f_{1}\\
f_{2}\\
\vdots\\
f_{m}\\
-h\alpha_{1}/\gamma_{1}\\
-h\alpha_{2}/\gamma_{2}\\
\vdots\\
-h\alpha_{m}/\gamma_{m}\\
-h\beta_{1}/\gamma_{1}\\
-h\beta_{2}/\gamma_{2}\\
\vdots\\
-h\beta_{m}/\gamma_{m}
\end{array}\right].\label{eq:rescaled-linear-system-degree-2}
\end{equation}
Mathematically, this geometric scaling is equivalent to multiplying
the last $2m$ rows of \eqref{eq:unscaled-Hermite} by $h$. In other
words, $\vec{V}=\vec{D}\vec{U}\vec{T}$, where 
\begin{equation}
\vec{D}=\left[\begin{array}{ccc}
\vec{I}\\
 & h\vec{I}\\
 &  & h\vec{I}
\end{array}\right],\label{eq:scaling-Hermite}
\end{equation}
and $\vec{T}$ is the diagonal matrix composed of $h^{-j-k}$, where
$j$ and $k$ are the powers in $\mu^{j}\nu^{k}$.

\subsubsection{\label{subsec:Stencil-Selection-Hermite}Stencil Selection.}

The stencil selection is important for the accuracy and efficiency
of the local polynomial fittings, since too large a stencil tends
to cause overfitting, while too small a stencil leads to low order
accuracy. A degree\emph{-$p$ }polynomial fitting has $n=(p+1)(p+2)/2$
coefficients to determine, so it requires at least $n$ points in
the stencil for point-based fittings. However, for Hermite-style fittings,
there are three equations for each point: one from the vertex position,
and two from the normal vector. Hence, Hermite-style fittings require
much fewer points in the stencil. As described in Section~\ref{subsec:WLS-Tri-Surf},
we choose the stencil of a vertex based on its $k$-ring neighborhood
in $1/2$ increments. Table~\ref{tab:Comparison-of-stencils} shows
a typical choice of ring size of point-based and Hermite-style fittings.
In addition, the table also shows the average numbers of vertices
in the rings for an example triangulation of a torus with 336 triangles.
It can be seen that for the Hermite-style fittings, it typically suffices
to use only a 1-ring neighborhood for degree-4 fittings, and only
a 2-ring neighborhood for degree-6 fittings. These are much more compact
than the typical 2.5-ring and 3.5-ring neighborhoods for the corresponding
point-based fittings. Note that similar to the point-based fittings,
if there are insufficient vertices in a stencil, our stencil selection
procedure adaptively enlarges the ring sizes.

\begin{table}
\caption{\label{tab:Comparison-of-stencils}Comparison of average stencil sizes
for point-based and Hermite-style fittings.}

\centering{}{\scriptsize{}}%
\begin{tabular}{c|c|c|c|c|c|c|c|c|c|c}
\hline 
 & \multicolumn{2}{c|}{{\scriptsize{}degree 2}} & \multicolumn{2}{c|}{{\scriptsize{}degree 3}} & \multicolumn{2}{c|}{{\scriptsize{}degree 4}} & \multicolumn{2}{c|}{{\scriptsize{}degree 5}} & \multicolumn{2}{c}{{\scriptsize{}degree 6}}\tabularnewline
\hline 
\hline 
{\scriptsize{}\#unknowns} & \multicolumn{2}{c|}{{\scriptsize{}6}} & \multicolumn{2}{c|}{{\scriptsize{}10}} & \multicolumn{2}{c|}{{\scriptsize{}15}} & \multicolumn{2}{c|}{{\scriptsize{}21}} & \multicolumn{2}{c}{{\scriptsize{}28}}\tabularnewline
\hline 
 & {\scriptsize{}\#ring} & {\scriptsize{}\#vert} & {\scriptsize{}\#ring} & {\scriptsize{}\#vert} & {\scriptsize{}\#ring} & {\scriptsize{}\#vert} & {\scriptsize{}\#ring} & {\scriptsize{}\#vert} & {\scriptsize{}\#ring} & {\scriptsize{}\#vert}\tabularnewline
\hline 
{\scriptsize{}point-based} & {\scriptsize{}1.5} & {\scriptsize{}12.6} & {\scriptsize{}2} & {\scriptsize{}18.4} & {\scriptsize{}2.5} & {\scriptsize{}29.6} & {\scriptsize{}3} & {\scriptsize{}37.5} & {\scriptsize{}3.5} & {\scriptsize{}52.7}\tabularnewline
\hline 
{\scriptsize{}Hermite-style} & {\scriptsize{}1} & {\scriptsize{}6.8} & {\scriptsize{}1} & {\scriptsize{}6.8} & {\scriptsize{}1} & {\scriptsize{}6.8} & {\scriptsize{}1.5} & {\scriptsize{}12.6} & {\scriptsize{}2} & {\scriptsize{}18.4}\tabularnewline
\hline 
\end{tabular}{\scriptsize\par}
\end{table}

Note that for points near sharp features, we must make sure not to
cross a ridge curve when building the stencil. To this end, we virtually
split the surface into smooth patches along the feature curves, and
reconstruct each smooth patch independently. This is more robust,
if the feature curves can be identified \emph{a priori} from the CAD
model or using some robust algorithms, such as that in \cite{JB08ICC}.
For ridge curves with very large dihedral angles, one could also use
the safeguard $\theta^{+}$ in the weighting scheme to filter out
points for simplicity.

\subsubsection{\label{subsec:weighting-scheme}Weighting Scheme.}

Similar to the point-based fittings, \eqref{eq:rescaled-linear-system}
can be solved under the framework of weighted linear least squares
to minimize the weighted norm, 
\begin{equation}
\min_{\boldsymbol{x}}\left\Vert \boldsymbol{\Omega}(\boldsymbol{Vx}-\boldsymbol{g})\right\Vert _{2}=\min_{\boldsymbol{c}}\left\Vert \boldsymbol{\Omega D}(\boldsymbol{Uc}-\boldsymbol{f})\right\Vert _{2},\label{eq:weighted-least-squares}
\end{equation}
where $\boldsymbol{\Omega}=\text{diag}(\omega_{1},\dots,\omega_{3m})$
is a $3m\times3m$ weighting matrix, and $\vec{c}=\vec{T}\vec{x}$
for $\vec{c}$ and $\vec{T}$ defined in \eqref{eq:unscaled-Hermite}
and \eqref{eq:scaling-Hermite}, respectively.

A key question is the selection of the weights. Let us first consider
the first $m$ weights $\omega_{1},\dots,\omega_{m}$ corresponding
to the equations \eqref{eq:1}. As described in Section~\ref{subsec:WLS-Tri-Surf},
we construct the weights based on a combination of Wendland's functions
\cite{wendland1995piecewise} and normal-based safeguards, Specifically,
let

\begin{equation}
\omega_{i}=\theta_{i}^{+}\psi(\Vert\vec{u}_{i}\Vert/\rho),\label{eq:weights}
\end{equation}
where $\theta_{i}^{+}=\textrm{max}(0,\vec{m}_{i}^{T}\vec{m}_{0})$,
$\rho$ is a measure of the radius of the stencil, and $\psi$ is
a Wendland's function. For even degrees $2,$ 4, and $6$, we use
$\psi=\psi_{3,1}$, $\psi_{4,2}$, and $\psi_{5,3}$ as defined in
\eqref{eq:Wendland_phi_3_1}--\eqref{eq:Wendland_phi_5_3}, respectively;
for odd degrees $p=2q-1$, we use the same weighting schemes as for
degree $2q$. To compute $\rho$, we find the $k$th nearest neighbors
of $\vec{x}_{0}$ in $uv$-plane within the stencil, where $k$ is
chosen to be the ceiling of $1.5$ times the number of unknowns, i.e.,
$k=\left\lceil 0.75(p+1)(p+2)\right\rceil $, and then
\begin{equation}
\rho=c\Vert\vec{u}_{k}\Vert,\label{eq:l-nearest-1}
\end{equation}
where $c>1$ depends on the degree of fitting. In particular, we choose
$c=1.15$, $1.2$, and $1.25$ for degrees $2,$ 4, and $6$, respectively,
which we obtained via numerical experimentation. For the equations
corresponding to $f_{u}$ and $f_{v}$, since we have applied geometric
scalings in \eqref{eq:rescaled-linear-system}, we apply the same
weights $\omega_{i}$ to all the equations associated with points
$\vec{u}_{i}$. In other words, 
\[
\vec{\Omega}=\begin{bmatrix}\vec{W}\\
 & \vec{W}\\
 &  & \vec{W}
\end{bmatrix}\quad\text{and}\quad\vec{\Omega}\vec{D}=\begin{bmatrix}\vec{W}\\
 & h\vec{W}\\
 &  & h\vec{W}
\end{bmatrix},
\]
where $\vec{W}$ is composed of $\omega_{i}$ in \eqref{eq:weights}.
The resulting weighted Vandermonde system can then be solved using
truncated QR with column pivoting as described in Section~\ref{subsec:Local-Polynomial-Fittings.}.

\subsection{\label{subsec:H-CMF}H-CMF and H-WALF}

The Hermite-style polynomial fittings described above can be integrated
into CMF and WALF, which refer to as \emph{H-CMF} and \emph{H-WALF},
correspondingly. H-CMF works similarly to CMF, as described in Section~\ref{subsec:WLS-Tri-Surf}.
In particular, it first constructs the local coordinate frame at point
$\vec{p}$ in triangle $\vec{x}_{1}\vec{x}_{2}\vec{x}_{3}$ by averaging
the approximate nodal normals $\vec{m}_{j}$ using the barycentric
coordinates $\xi_{j}$. Then, the stencil for $\vec{p}$ is taken
to be the union of the stencils of the three nodes. H-CMF does not
guarantee $G^{0}$ continuity, because the weighting scheme may be
discontinuous and there may be truncation when solving the least squares
problems.

H-WALF works similarly to WALF, as described in Section~\ref{subsec:WALF}.
More specifically, consider a triangle composed of vertices $\vec{x}_{j}$,
$j=1,2,3$. For any point $\vec{p}$ in the triangle, we first obtain
a point $\vec{q}_{j}$ from the Hermite-style fitting in the local
coordinate frame at $\vec{x}_{j}$. Let $\xi_{j}$, $j=1,2,3$ be
the barycentric coordinates of $\vec{p}$ within the triangle. Then,
the H-WALF reconstruction of $\vec{p}$ is given by $\vec{q}=\sum_{j=1}^{3}\xi_{j}\vec{q}_{j}$.
For smooth surfaces, H-WALF constructs a $G^{0}$ continuous surface.

\subsection{\label{subsec:Accuracy-Hermite}Accuracy of Hermite-Style Fittings}

To analyze the accuracy of H-CMF and H-WALF, we must first understand
the convergence of the Hermite-style polynomial fittings. This analysis
is similar to the point-based fittings in \cite{JZ08CCF}, but the
inclusion of the normals requires some special care. Hence, we include
the analysis here for completeness. Assuming the mesh is relatively
uniform, let \emph{h} denote a measure of average edge length of the
mesh. We obtain the following lemma.
\begin{lem}
\label{lem:accuracy-Hermite}Given a set of points $[u_{i},v_{i},\tilde{f}_{i}]$
that interpolate a smooth height function $f$ or approximate f with
an error of $\mathcal{O}(h^{p+1})$, along with the gradients $[f_{u}(\vec{u}_{i}),f_{v}(\vec{u}_{i})]$,
which are approximated to $\mathcal{O}(h^{p})$, assume that the point
distribution and the weights are independent of $h$, and the condition
number in any $p$-norm of the scaled matrix $\boldsymbol{\varOmega V}$
is bounded by some constant. Then, the degree-d weighted least squares
fitting approximates $c_{jk}$ in \eqref{eq:unscaled-Hermite} to
$\mathcal{O}(h^{p-j-k+1})$.
\end{lem}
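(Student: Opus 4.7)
The plan is to plug the exact Taylor coefficients into the scaled weighted system, estimate the residual on the right-hand side row by row, and then use the bounded condition number hypothesis to transfer the residual bound to an error bound on the computed solution, finally undoing the geometric scaling to recover per-coefficient bounds. Let $c_{jk}^{*}$ denote the exact Taylor coefficients of $f$ about $\vec{u}_{0}=\vec{0}$, assembled into $\vec{c}^{*}\in\mathbb{R}^{n}$, and set $\vec{x}^{*}=\vec{T}^{-1}\vec{c}^{*}$, so that $x_{jk}^{*}=h^{j+k}c_{jk}^{*}$ in the rescaled system \eqref{eq:rescaled-linear-system}.

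Next, I would compute the residual $\vec{r}=\vec{U}\vec{c}^{*}-\vec{f}$ row by row. For each of the first $m$ rows, combining Taylor's theorem (using $|u_{i}|,|v_{i}|=\mathcal{O}(h)$) with the $\mathcal{O}(h^{p+1})$ data error on $\tilde{f}_{i}$ gives $r_{i}=\bigl(f(\vec{u}_{i})-\sum c_{jk}^{*}u_{i}^{j}v_{i}^{k}\bigr)+\bigl(\tilde{f}_{i}-f(\vec{u}_{i})\bigr)=\mathcal{O}(h^{p+1})$. For the remaining $2m$ rows corresponding to \eqref{eq:f_u} and \eqref{eq:f_v}, the Taylor remainder is only $\mathcal{O}(h^{p})$ and the gradient data error is $\mathcal{O}(h^{p})$ by hypothesis, so these entries of $\vec{r}$ are $\mathcal{O}(h^{p})$. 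Multiplying by the scaling matrix $\vec{D}$ in \eqref{eq:scaling-Hermite} promotes the bottom $2m$ entries by a factor of $h$, so $\vec{D}\vec{r}$ has all entries $\mathcal{O}(h^{p+1})$. Since the weights in $\vec{\Omega}$ are bounded and independent of $h$ (the Wendland factors and $\theta_{i}^{+}$ lie in $[0,1]$, and the scaled stencil geometry is $\mathcal{O}(1)$), we obtain $\|\vec{\Omega}\vec{D}\vec{r}\|_{2}=\mathcal{O}(h^{p+1})$.

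Now, from the identity $\vec{\Omega V}\vec{x}^{*}-\vec{\Omega g}=\vec{\Omega D}(\vec{U}\vec{c}^{*}-\vec{f})=\vec{\Omega D r}$ and the normal-equation expression for the least-squares solution $\vec{x}_{\text{LS}}=(\vec{\Omega V})^{+}\vec{\Omega g}$, one has
\[
\vec{x}_{\text{LS}}-\vec{x}^{*}=(\vec{\Omega V})^{+}(\vec{\Omega g}-\vec{\Omega V}\vec{x}^{*})=-(\vec{\Omega V})^{+}\vec{\Omega D r}.
\]
Because $\|\vec{\Omega V}\|_{2}$ is bounded in $h$ (entries are polynomials in the normalized $\mu_{i},\nu_{i}$) and the condition number of $\vec{\Omega V}$ is bounded by hypothesis, $\|(\vec{\Omega V})^{+}\|_{2}$ is also bounded, yielding $\|\vec{x}_{\text{LS}}-\vec{x}^{*}\|_{2}=\mathcal{O}(h^{p+1})$. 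Since every component is bounded by the $2$-norm, $|x_{jk}-x_{jk}^{*}|=\mathcal{O}(h^{p+1})$, and undoing the geometric scaling via $c_{jk}=h^{-j-k}x_{jk}$ gives $|c_{jk}-c_{jk}^{*}|=h^{-j-k}\mathcal{O}(h^{p+1})=\mathcal{O}(h^{p-j-k+1})$, which is the claim.

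The main obstacle is not the algebra but making the constants uniform in $h$. In particular, one must argue carefully that the bounded-condition-number hypothesis on $\vec{\Omega V}$ yields a bound on $\|(\vec{\Omega V})^{+}\|_{2}$ and not just on $\kappa(\vec{\Omega V})$; this requires the matched upper bound on $\|\vec{\Omega V}\|_{2}$, which holds precisely because geometric scaling has made the entries of $\vec{V}$ dimensionless and the weights are normalized. A second subtlety is the asymmetry between the value rows (residual $\mathcal{O}(h^{p+1})$) and the gradient rows (residual $\mathcal{O}(h^{p})$): it is precisely the factor of $h$ introduced by $\vec{D}$ that equalizes the two sources of error, explaining why the geometric scaling of Section~\ref{subsec:geom_scaling_surf} is not merely a conditioning device but is essential for the accuracy argument.
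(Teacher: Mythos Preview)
Your argument is correct and follows essentially the same route as the paper's proof: plug the exact Taylor coefficients into the scaled system, bound the residual $\vec{\Omega}\vec{D}(\vec{U}\hat{\vec{c}}-\vec{f})$ entrywise as $\mathcal{O}(h^{p+1})$, use the bounded condition number together with $\|\vec{\Omega V}\|=\Theta(1)$ to control $\|(\vec{\Omega V})^{+}\|$, and then undo the geometric scaling. The only cosmetic differences are that the paper works in the $\infty$-norm rather than the $2$-norm and phrases the key step as ``$\delta\vec{x}$ is the least-squares solution to $\vec{A}\,\delta\vec{x}\approx\vec{r}$''; your explicit remark that the factor $h$ in $\vec{D}$ is what equalizes the $\mathcal{O}(h^{p})$ gradient rows with the $\mathcal{O}(h^{p+1})$ value rows makes the role of geometric scaling clearer than in the paper.
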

\begin{proof}
Consider the least squares problem \eqref{eq:unscaled-Hermite}. Let
$\vec{A}=\vec{\Omega}\vec{V}=\boldsymbol{\varOmega DUT}$ and $\vec{b}=\vec{\Omega}\vec{g}$.
Let $\hat{\vec{c}}$ denote the exact coefficients in the Taylor polynomial
\eqref{eq:FTaylorseries_cont}. Let $\hat{\vec{x}}=\vec{T}\hat{\vec{c}}$,
$\hat{\vec{b}}=\vec{\Omega}\vec{V}\vec{x}$, $\delta\vec{x}=\vec{x}-\hat{\vec{x}}$,
and $\vec{r}=\vec{b}-\hat{\vec{b}}$. Then, $\vec{A}\hat{\vec{x}}=\hat{\vec{b}}$,
and $\delta\vec{x}$ is the least squares solution to 
\[
\vec{A}\delta\vec{x}\approx\vec{r}.
\]
Hence, 
\[
\left\Vert \delta\vec{x}\right\Vert _{\infty}\leq\left\Vert \vec{A}^{+}\right\Vert _{\infty}\left\Vert \vec{r}\right\Vert _{\infty}.
\]
Note that $\vec{r}=\vec{b}-\hat{\vec{b}}=\vec{\Omega}\vec{D}(\vec{f}-\vec{U}\hat{\vec{c}})$.
Under the assumption that $f(\vec{u}_{i})$ is approximated to at
least $\mathcal{O}(h^{p+1})$ and the derivatives $f_{u}(\vec{u}_{i})$
and $f_{v}(\vec{u}_{i})$ are approximated to $\mathcal{O}(h^{p})$,
each entry in $\vec{D}(\vec{f}-\vec{U}\hat{\vec{c}})$ is $\mathcal{O}(h^{p+1})$,
so is each entry in $\vec{r}$ since $\omega_{i}=\Theta(1)$. Under
the assumptions of the theorem, $\kappa_{\infty}(\vec{A})=\left\Vert \vec{A}\right\Vert _{\infty}\left\Vert \vec{A}^{+}\right\Vert _{\infty}=\Theta(1)$
and $\left\Vert \vec{A}\right\Vert _{\infty}=\Theta(1)$. Hence, $\left\Vert \vec{A}^{+}\right\Vert _{\infty}=\Theta(1)$
and $\left\Vert \delta\vec{x}\right\Vert _{\infty}=\mathcal{O}(h^{p+1})$.
Therefore, each entry in $\delta\vec{c}=\vec{T}\delta\vec{x}$ corresponding
to $c_{ij}$ is $\mathcal{O}(h^{p-j-k+1})$.
\end{proof}
The accuracy of H-CMF directly follows from Lemma~\ref{lem:accuracy-Hermite}.
\begin{prop}
\label{prop:h-cmf}Given a mesh whose vertices approximate a smooth
surface $\Gamma$ with an error of $\mathcal{O}(h^{p+1})$ and the
normal vectors are also approximated to $\mathcal{O}(h^{p})$, assuming
the rescaled Vandermonde systems are well conditioned, the distance
between each point on the H-CMF reconstructed surface and its closest
point on $\Gamma$ is $\mathcal{O}(h^{p+1})$.
\end{prop}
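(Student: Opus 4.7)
The plan is to combine Lemma~\ref{lem:accuracy-Hermite} with a pointwise evaluation argument, following the outline of the point-based CMF analysis in \cite{Jiao2011RHO}. First, I would fix an arbitrary evaluation point $\vec{p}$ in a triangle with vertices $\vec{x}_1,\vec{x}_2,\vec{x}_3$ and observe that the H-CMF local frame at $\vec{p}$ is built from the barycentric average of the nodal normals $\vec{m}_j$, each of which differs from the true normal by $\mathcal{O}(h^{p})$. Since a convex combination preserves this order of accuracy, the frame $\vec{Q}_0$ at $\vec{p}$ deviates from the exact local frame by $\mathcal{O}(h^{p})$, which for $h$ small enough keeps $\gamma_i$ uniformly bounded away from zero for every stencil vertex and guarantees that the local coordinates $\vec{u}_i$ of the stencil points satisfy $\|\vec{u}_i\| = \mathcal{O}(h)$.

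Next, I would apply Lemma~\ref{lem:accuracy-Hermite} to the Hermite-style fitting performed in this local frame. The hypotheses of the lemma require the sample heights $f_i$ to be accurate to $\mathcal{O}(h^{p+1})$ and the sample gradients $f_u(\vec{u}_i),f_v(\vec{u}_i)$ extracted from the input normals via \eqref{eq:f_u}--\eqref{eq:f_v} to be accurate to $\mathcal{O}(h^{p})$. Both follow from the assumed accuracy of the input vertices and normals combined with the $\mathcal{O}(h^{p})$ frame perturbation, via a chain-rule computation on the coordinate transformation \eqref{eq:coordinate-transformation}. The lemma then yields $\tilde{c}_{jk} - c_{jk} = \mathcal{O}(h^{p-j-k+1})$ for the coefficients of the degree-$p$ fitted polynomial $\tilde{f}$.

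Then I would estimate the pointwise error at $\vec{p}$. Decomposing $\tilde{f}(\vec{u}) - f(\vec{u})$ into coefficient errors multiplied by monomials of degree $q=j+k$ plus the truncation tail of \eqref{eq:FTaylorseries_disc}, the degree-$q$ term is bounded by $\mathcal{O}(h^{p-q+1}) \cdot \mathcal{O}(h^{q}) = \mathcal{O}(h^{p+1})$ since $\|\vec{u}\| = \mathcal{O}(h)$, while the truncation tail is $\mathcal{O}(\|\vec{u}\|^{p+1}) = \mathcal{O}(h^{p+1})$. Summing the finitely many terms yields $|\tilde{f}(\vec{u}) - f(\vec{u})| = \mathcal{O}(h^{p+1})$. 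Because the error is measured along the $w$-axis of a near-normal frame, the shortest distance from the reconstructed point $\vec{q}$ to $\Gamma$ is bounded above by this height error up to the factor $1/\ell = 1/\sqrt{1+\|\nabla f\|^2} = \Theta(1)$ from \eqref{eq:normal}, yielding the claimed $\mathcal{O}(h^{p+1})$ bound.

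The main obstacle will be handling the interaction between the barycentric-averaged moving frame and the Hermite hypotheses of Lemma~\ref{lem:accuracy-Hermite}: one must verify that an $\mathcal{O}(h^{p})$ perturbation of the frame does not degrade either the sampled heights $f_i$ (which must remain $\mathcal{O}(h^{p+1})$) or the sampled gradients extracted from the input normals. A careful expansion shows the height values pick up only a second-order perturbation in the frame tilt (hence $\mathcal{O}(h^{p})\cdot\mathcal{O}(h)=\mathcal{O}(h^{p+1})$), and the gradient equations \eqref{eq:f_u}--\eqref{eq:f_v} transform through a smooth map whose linearization preserves the $\mathcal{O}(h^{p})$ accuracy, exactly matching the hypotheses of the lemma. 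Once this technical check is dispatched, the rest of the argument is a direct composition of the lemma with the Taylor remainder bound.
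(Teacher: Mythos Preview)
Your approach is essentially the paper's: verify that the hypotheses of Lemma~\ref{lem:accuracy-Hermite} hold in the local frame (heights accurate to $\mathcal{O}(h^{p+1})$, gradients extracted from $\vec{n}_i$ accurate to $\mathcal{O}(h^{p})$, $\gamma_i$ bounded away from zero) and invoke the lemma to bound the height-function error, hence the distance to $\Gamma$. The paper's proof is a three-line application of the lemma and does not analyze the frame accuracy at all---note that the $\vec{m}_j$ used to build the moving frame are only first-order approximate normals (Section~\ref{subsec:WLS-Tri-Surf}), not $\mathcal{O}(h^{p})$, and this suffices because the sampled heights and gradients inherit their accuracy directly from the input data $\vec{x}_i$ and $\vec{n}_i$, independent of how precisely the frame is aligned, so your frame-perturbation detour is unnecessary though harmless.
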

\begin{proof}
In H-CMF, the gradients to the local height function $f$ are $[-\alpha_{i}/\gamma_{i},-\beta_{i}/\gamma_{i}]$,
where $\vec{Q}_{0}^{T}\boldsymbol{n}_{i}=[\alpha_{i},\beta_{i},\gamma_{i}]^{T}$.
Since the normals are $p$th order accurate, so is the approximation
to $f_{u}$ and $f_{v}$ as long as $\gamma_{i}$ is bounded away
from 0. It then follows from Lemma~\ref{lem:accuracy-Hermite} that
the local height function is approximated to $\mathcal{O}(h^{p+1})$,
so is the distance from the reconstructed point to the closest point
on $\Gamma$.
\end{proof}
In Proposition~\ref{prop:h-cmf}, the well-conditioning of the Vandermonde
system is typically achieved by the adaptive stencil selection. If
the Vandermonde system is ill-conditioned, then some higher-order
terms may be truncated by QRCP, and and the convergence rate may be
lower. In the proof, it is important that $\gamma_{i}$ is bounded
away from 0, which can be ensured by the normal-based safeguards in
the weighting schemes.

The accuracy of H-WALF is more complicated, in that like WALF, its
error has a lower bound $\mathcal{O}(h^{6})$. We summarize its convergence
rate as follows.
\begin{prop}
\label{prop:h-walf}Under the same assumption as Proposition~\ref{prop:h-cmf},
the distance between each point on the H-WALF reconstructed surface
and its closest point on $\Gamma$ is $\mathcal{O}(h^{p+1}+h^{6})$.
\end{prop}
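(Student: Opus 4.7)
The plan is to follow the WALF accuracy analysis template of \cite{Jiao2011RHO}, substituting the Hermite-style fitting accuracy from Proposition~\ref{prop:h-cmf} in place of the point-based fitting accuracy. The H-WALF error naturally splits into two contributions: the fitting error at each vertex (now bounded by $\mathcal{O}(h^{p+1})$ via the Hermite-style analysis), and the error introduced by the barycentric blending of three local fittings (whose structure is identical to standard WALF and contributes the $\mathcal{O}(h^{6})$ term).

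First I would fix an arbitrary point $\vec{p}$ in the triangle $\vec{x}_{1}\vec{x}_{2}\vec{x}_{3}$ with barycentric coordinates $(\xi_{1},\xi_{2},\xi_{3})$, so that by the construction of H-WALF, $\vec{q}=\sum_{j}\xi_{j}\vec{q}_{j}$, where each $\vec{q}_{j}$ is obtained from the local Hermite-style fitting at $\vec{x}_{j}$ evaluated at the projection of $\vec{p}$ into the corresponding local frame. Proposition~\ref{prop:h-cmf} gives $\mathrm{dist}(\vec{q}_{j},\Gamma)=\mathcal{O}(h^{p+1})$ for each $j$. Writing $\vec{q}_{j}=\vec{y}_{j}+\vec{e}_{j}$ with $\vec{y}_{j}\in\Gamma$ and $\|\vec{e}_{j}\|=\mathcal{O}(h^{p+1})$, the H-WALF reconstruction becomes $\vec{q}=\sum_{j}\xi_{j}\vec{y}_{j}+\mathcal{O}(h^{p+1})$, and bounding $\mathrm{dist}(\vec{q},\Gamma)$ reduces, up to this fitting remainder, to bounding $\mathrm{dist}(\sum_{j}\xi_{j}\vec{y}_{j},\,\Gamma)$.

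Next I would introduce a common local frame at a reference point $\vec{y}_{0}\in\Gamma$ close to $\vec{p}$, in which $\Gamma$ is the graph $w=F(\vec{u})$ of a smooth height function. In this frame each $\vec{y}_{j}$ has coordinates $(\vec{u}_{j},F(\vec{u}_{j}))$ with the $\vec{u}_{j}$ within $\mathcal{O}(h)$ of one another. The structural point is that the $\vec{y}_{j}$ are not arbitrary points on $\Gamma$: each is the lift of the same $\vec{p}$ through the tangent plane at $\vec{x}_{j}$, so their pairwise differences are governed by the $\mathcal{O}(h)$ variation of the approximate normals together with the higher-order geometry of $\Gamma$. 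Taylor-expanding $F$ about $\vec{y}_{0}$ and exploiting this structure yields the $\mathcal{O}(h^{6})$ blending bound by exactly the computation carried out in the WALF proof of \cite{Jiao2011RHO}, from which the claimed estimate $\mathcal{O}(h^{p+1}+h^{6})$ follows immediately.

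The hard part, and the only nontrivial step beyond invoking Proposition~\ref{prop:h-cmf}, is verifying the $\mathcal{O}(h^{6})$ blending residual in full detail. This is a laborious but routine Taylor-expansion argument tracking how the three lifted positions $\vec{y}_{j}$ depend on the local frame rotations and on the curvature of $\Gamma$; it depends only on the barycentric averaging structure and on the local geometry of $\Gamma$, not on whether the underlying fittings are point-based or Hermite-style. For that reason I would inherit it verbatim from \cite{Jiao2011RHO} rather than reproduce it here, so that the present proposition reduces essentially to combining the improved fitting estimate of Proposition~\ref{prop:h-cmf} with that invariant blending bound.
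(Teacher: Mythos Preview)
Your proposal is correct and matches the paper's approach: decompose the error via intermediate points on $\Gamma$ into a per-vertex fitting contribution, bounded by $\mathcal{O}(h^{p+1})$ from the Hermite-style accuracy, and a barycentric blending contribution, bounded by $\mathcal{O}(h^{6})$ and inherited verbatim from the WALF analysis in \cite{Jiao2011RHO}. The only cosmetic difference is that the paper takes as its intermediate points the projections $\vec{q}_{j}^{*}$ of $\vec{p}$ onto $\Gamma$ along the approximate normals $\vec{m}_{j}$ (so that $\Vert\vec{q}_{j}-\vec{q}_{j}^{*}\Vert$ is exactly the local height-function residual), whereas you use the closest-point projections of $\vec{q}_{j}$; both choices give the same triangle-inequality split and the same $\mathcal{O}(h^{3})$ pairwise separation needed for the $\mathcal{O}(h^{6})$ blending term.
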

The lower bound of $\mathcal{O}(h^{6})$ error is due to the discrepancies
of the local coordinate frames at the vertices of a triangle. Figure~\ref{fig:surface_}
illustrates the origin of this error bound. Let $\vec{q}$ denote
the H-WALF reconstruction of a point $\vec{p}$ in the triangle $\vec{x}_{1}\vec{x}_{2}\vec{x}_{2}$.
Let $\bar{\vec{q}}^{*}$ be the closest point of $\vec{q}$ on $\Gamma$.
Let $\vec{q}_{j}^{*}$ be projection of $\vec{p}$ onto the exact
surface $\Gamma$ along $\vec{m}_{j}$, and $\vec{q}^{*}=\sum_{j=1}^{3}\xi_{j}\vec{q}_{j}^{*}$.
Then 
\[
\text{dist}(\vec{q},\Gamma)\leq\parallel\vec{q}-\bar{\vec{q}}^{*}\parallel\leq\parallel\vec{q}-\vec{q}^{*}\parallel+\parallel\vec{q}^{*}-\bar{\vec{q}}^{*}\parallel.
\]
The error $\parallel\vec{q}-\vec{q}^{*}\parallel=\mathcal{O}(h^{p+1})$
is due to Lemma~\ref{lem:accuracy-Hermite}. It can be shown that
$\Vert\vec{q}_{i}^{*}-\vec{q}_{j}^{*}\Vert=\mathcal{O}(h^{3})$ for
$1\le i,j\leq3$ and $\parallel\vec{q}^{*}-\bar{\vec{q}}^{*}\parallel=\mathcal{O}(\max\Vert\vec{q}_{i}^{*}-\vec{q}_{j}^{*}\Vert)^{2}=\mathcal{O}(h^{6})$;
see \cite{Jiao2011RHO} for a complete proof. 
\begin{figure}
\centering{}\subfloat[An element and exact surface.]{\includegraphics[width=0.45\textwidth]{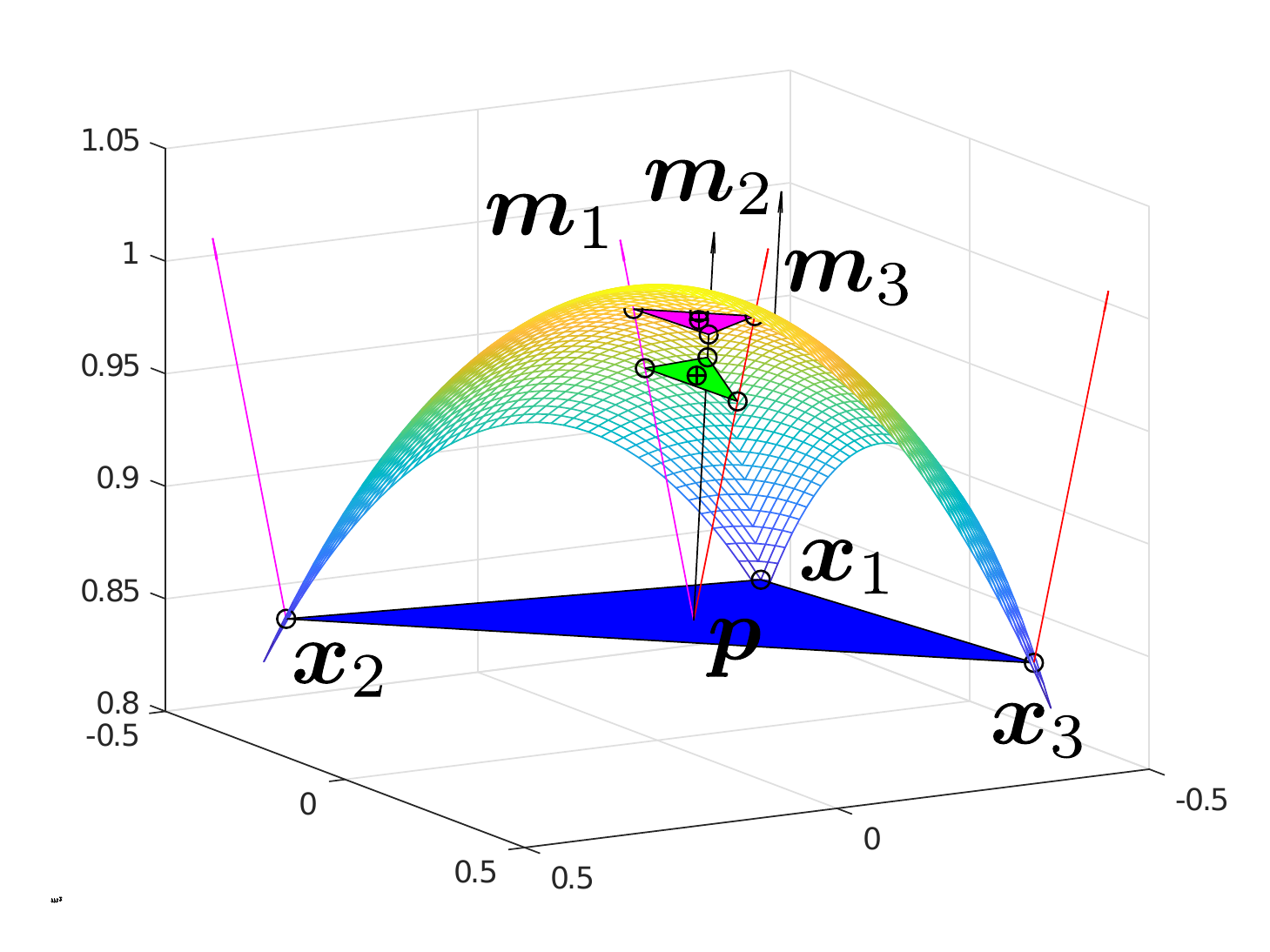}}\subfloat[Enlarged local view.]{\includegraphics[width=0.45\textwidth]{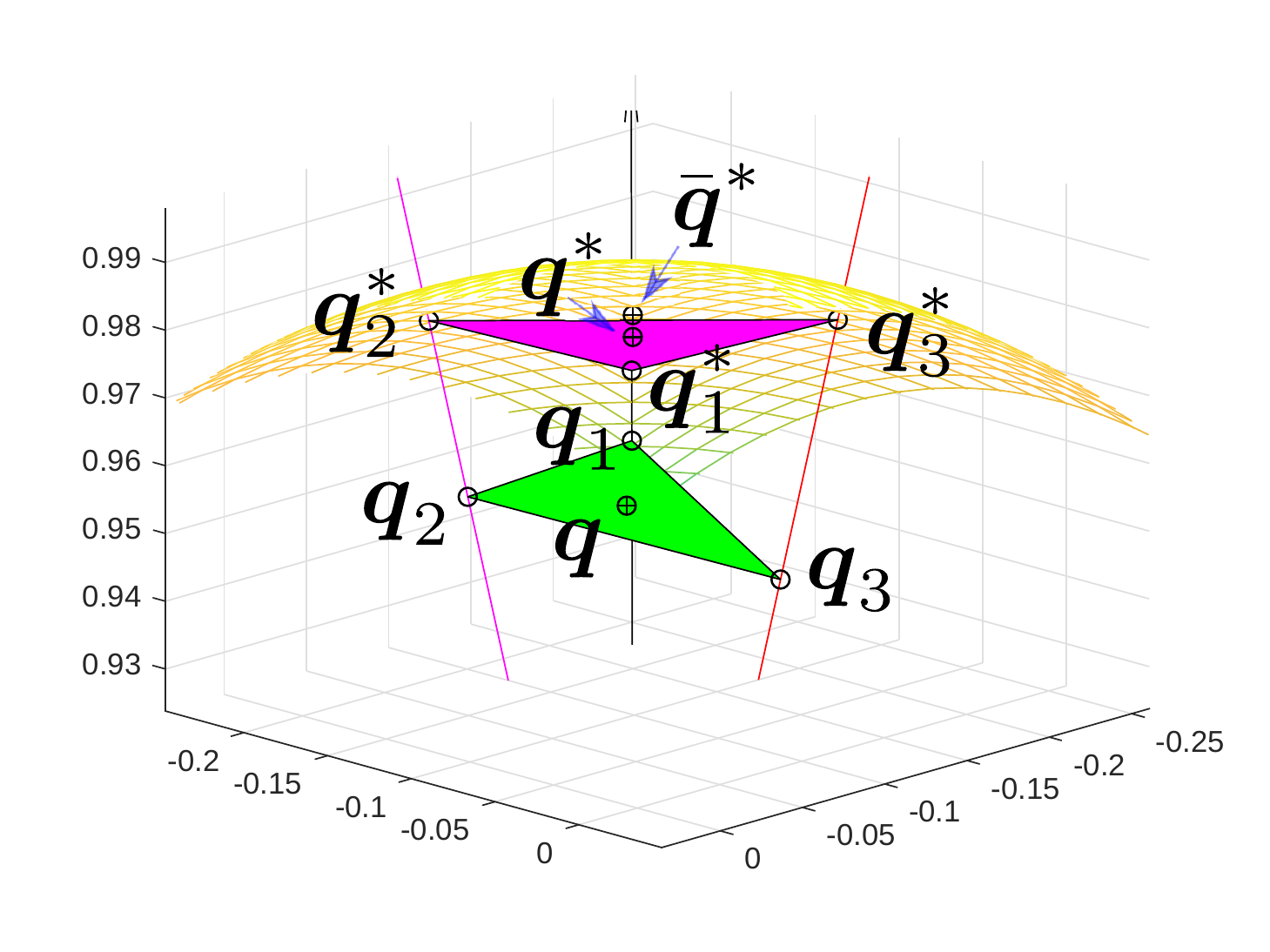}}\caption{\label{fig:surface_}3-D illustration of error analysis in H-WALF.}
\end{figure}

Finally, we note that in Lemma~\ref{lem:accuracy-Hermite}, for even-degree
polynomials, the leading-error terms are odd-degree polynomials, which
may result in error cancellation if the stencils are perfectly symmetric,
analogous to the error cancellation in center-difference schemes.
In practice, error cancellation also occurs for nearly symmetric stencils.
Therefore, we may observe similar convergence rates for polynomial
fittings of degrees $2q$ and $2q+1$. Furthermore, since degree-2$q$
fittings require smaller stencils, they may have even smaller errors
than degree-($2q+1$) fittings. Hence, it is desirable to use even-degree
polynomials for high-order reconstructions for nearly symmetric stencils,
as we will demonstrate in Section~\ref{sec:Numerical-Results}. Furthermore,
Propositions~\ref{prop:h-cmf} and \ref{prop:h-walf} imply that
H-WALF is less accurate than H-CMF for high-degree polynomials. In
practice, H-WALF is well suited for degree-2 or degree-4 reconstructions
for its better efficiency, and H-CMF is better suited for degree-6
or higher-order reconstructions. In addition, we note that H-CMF tends
to deliver better stability than H-WALF if the stencil is one-sided,
and hence for open surfaces or piecewise smooth surfaces, H-CMF is
preferred near boundaries or sharp features.

\section{\label{sec:HWALF-curve}Hermite-Style High-Order Curve Reconstruction}

In this section, we present a procedure for high-order reconstruction
of space curves, such as feature curves on a piecewise smooth surface
or the boundary curve of an open surface. We focus on Hermite-style
reconstruction using points and tangents, which can be simplified
to point-based reconstruction if one omits the equations associated
with tangents. We assume the curve is piecewise smooth, and its end-points
or corners are accurate and do not need reconstruction.

\subsection{\label{subsec:H-CMF-curve}H-CMF and H-WALF Curve Reconstructions}

Consider a point $\vec{x}_{0}$ and a collection of points $\{\vec{x}_{i}\mid1\leq i\leq m\}$
in its neighborhood on a curve. Let $\vec{t}_{i}$ denote an accurate
tangent vector to the curve at $\vec{x}_{i}$. Let $u_{i}$ denote
the local coordinate of $\vec{x}_{i}$ in the local $uvw$ frame centered
$\vec{x}_{0}$ as defined in Section~\ref{subsec:Space-Curves},
and let $\vec{Q}_{0}^{T}(\vec{x}_{i}-\vec{x}_{0})=\left[u_{i},v_{i},w_{i}\right]^{T}$.
Let $\vec{f}$ denote the vector-valued local height function. The
Taylor series of $\vec{f}$ about $u_{0}=0$ is given by 
\begin{equation}
\vec{f}(u)\approx\stackrel[q=0]{p}{\sum}\vec{c}_{q}u^{q}\label{eq:Taylor-polynomial-curve}
\end{equation}
where $\vec{c}_{q}=\dfrac{1}{q!}\dfrac{d^{q}}{du^{k}}\vec{f}(u_{0})$.
Let $c_{q}$ and $d_{q}$ denote the two entries in $\vec{c}_{q}$,
respectively. For each point $\vec{x}_{i}$, we then have two equations
\begin{gather}
\stackrel[q=0]{p}{\sum}c_{q}u_{i}^{q}\approx u_{i},\label{eq:curve_taylordisc1}\\
\stackrel[q=0]{p}{\sum}d_{q}u_{i}^{q}\approx w_{i}.\label{eq:curve_taylordisc2}
\end{gather}
This amounts to a $2m\times n$ linear system for point-based curve
reconstruction, where $n=p+1$.

For Hermite-style reconstruction, let $\vec{Q}_{0}^{T}\vec{t}_{i}=\left[\alpha_{i},\beta_{i},\gamma_{i}\right]^{T}$,
and then $\vec{f}'(u_{i})=\left[\dfrac{\beta_{i}}{\alpha_{i}},\dfrac{\gamma_{i}}{\alpha_{i}}\right]^{T}$.
The Taylor series of $\vec{f}'$ about $u_{0}=0$ is given by 
\begin{align}
\vec{f}'(u) & \approx\stackrel[q=1]{p}{\sum}q\vec{c}_{q}u^{q-1}.\label{eq:deriv_Taylor}
\end{align}
For the given tangent vector at vertex $\vec{x}_{i}$, we then have
two additional equations 
\begin{gather}
\stackrel[q=1]{p}{\sum}qc_{q}u_{i}^{q-1}\approx\dfrac{\beta_{i}}{\alpha_{i}},\label{eq:curve_taylor-deriv1}\\
\stackrel[q=1]{p}{\sum}qc_{q}u_{i}^{q-1}\approx\dfrac{\gamma_{i}}{\alpha_{i}}.\label{eq:curve_taylor-deriv2}
\end{gather}
Assuming a tangent vector is given for each point $\vec{x}_{i}$,
we obtain a $4m\times n$ linear system for Hermite-style curve reconstruction.
For curves with corners, the tangent direction at a corner may not
be well-defined. In this case, we can either use one-sided tangent
at a corner, or do not include the equations associated with the tangents
at corners.

On a triangulated surface, a feature curve is composed of edges. In
general, we choose the stencil to be the $\left\lceil (p+1)/2\right\rceil $
and $\left\lceil (p+1)/4\right\rceil $ rings for the point-based
and Hermite-style reconstructions, respectively, and we adaptively
enlarge the ring sizes if there are insufficient number of points.
We use the Wendland weights with safeguards, as described in Section~\ref{subsec:Hermite-Style-Surface},
except that safeguard $\theta_{i}^{+}$ are now defined based on tangents
instead of normals. The resulting weighted least squares problem is
then rescaled geometrically and solved robustly using QRCP.

By defining a $C^{0}$ continuous tangent vector field on a feature
curve, we obtain the CMF and H-CMF local reconstructions. To recover
a $G^{0}$ continuous curve, we can utilize the weighted averaging
of the local reconstructions at the vertices to obtain WALF and H-WALF
reconstructions. Alternatively, we can use high-order parametric elements
to define a $G^{0}$ continuous curve. These constructions are similar
to their counterparts for surfaces as described in Section~\ref{sec:HWALF-surface},
and hence we omit their details.

\subsection{Accuracy of Curve Reconstructions}

Let \emph{h} denote the average edge length of the mesh. Then, the
following lemma can be established:
\begin{lem}
\label{lem:accuracy-coefficients}Given a set of points $[u_{i},v_{i},w_{i}]$
that interpolate a smooth curve or approximate the curve with an error
of $\mathcal{O}(h^{p+1})$, along withe the derivatives $v'(u_{i})$
and $w'(u_{i})$, which are approximated to $\mathcal{O}(h^{p})$,
assume the point distribution and the weights are independent of $h$,
and the condition number of the scaled matrix $\boldsymbol{\varOmega V}$
is bounded by some constant. The degree-d weighted least squares fitting
approximates $c_{q}$ and $d_{q}$ to $\mathcal{O}(h^{p-q+1})$. 
\end{lem}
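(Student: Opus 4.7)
My plan is to mirror the proof of Lemma~\ref{lem:accuracy-Hermite} almost verbatim, with the bookkeeping adjusted to the $4m \times n$ Hermite system and the vector-valued height function $\vec{f} = [v,w]^{T}$. First I would apply the same geometric scaling used in Section~\ref{subsec:geom_scaling_surf}, namely set $\mu_{i} = u_{i}/h$, $x_{q} = h^{q} c_{q}$, and $y_{q} = h^{q} d_{q}$. Rewriting \eqref{eq:curve_taylordisc1}--\eqref{eq:curve_taylor-deriv2} in terms of $\mu_{i}$, $x_{q}$, $y_{q}$ gives a scaled system $\vec{V}\vec{z} \approx \vec{g}$ of size $4m \times n$ with $n=p+1$, which factors as $\vec{V} = \vec{D}\vec{U}\vec{T}$ where $\vec{D} = \operatorname{diag}(\vec{I}, \vec{I}, h\vec{I}, h\vec{I})$ (the third and fourth blocks come from the tangent equations being multiplied by $h$) and $\vec{T}$ is the diagonal matrix with entries $h^{-q}$ on each of the two coefficient blocks.

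Next I would set $\vec{A} = \vec{\Omega}\vec{V}$, $\vec{b} = \vec{\Omega}\vec{g}$, let $\hat{\vec{c}}, \hat{\vec{d}}$ denote the exact Taylor coefficients and $\hat{\vec{z}}$ the corresponding scaled vector, and write $\delta \vec{z} = \vec{z} - \hat{\vec{z}}$ as the weighted least squares solution to $\vec{A}\,\delta\vec{z} \approx \vec{r}$ with $\vec{r} = \vec{b} - \vec{A}\hat{\vec{z}}$. Exactly as in Lemma~\ref{lem:accuracy-Hermite}, $\|\delta\vec{z}\|_{\infty} \leq \|\vec{A}^{+}\|_{\infty}\|\vec{r}\|_{\infty}$, and the hypotheses $\kappa_{\infty}(\vec{\Omega V}) = \Theta(1)$ and $\|\vec{\Omega V}\|_{\infty} = \Theta(1)$ force $\|\vec{A}^{+}\|_{\infty} = \Theta(1)$.

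The heart of the argument is the bound on $\|\vec{r}\|_{\infty}$. The first $2m$ rows of $\vec{r}$ measure the consistency errors of \eqref{eq:curve_taylordisc1}--\eqref{eq:curve_taylordisc2}, which by hypothesis are $\mathcal{O}(h^{p+1})$. The last $2m$ rows measure the consistency errors of \eqref{eq:curve_taylor-deriv1}--\eqref{eq:curve_taylor-deriv2}, which are $\mathcal{O}(h^{p})$, but are premultiplied by $h$ through $\vec{D}$; so those entries become $\mathcal{O}(h^{p+1})$ as well. Since the weights $\omega_{i}$ are assumed bounded and independent of $h$, every component of $\vec{r}$ is $\mathcal{O}(h^{p+1})$, and hence $\|\delta\vec{z}\|_{\infty} = \mathcal{O}(h^{p+1})$. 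Undoing the coordinate scaling gives $|\delta c_{q}| = h^{-q}|\delta x_{q}| = \mathcal{O}(h^{p-q+1})$ and likewise $|\delta d_{q}| = \mathcal{O}(h^{p-q+1})$, which is the claim.

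The argument involves no new analytic difficulty beyond Lemma~\ref{lem:accuracy-Hermite}; the main thing to get right is the bookkeeping around the geometric scaling, in particular checking that the factor of $h$ in the third and fourth blocks of $\vec{D}$ exactly compensates for the one-power-of-$h$ loss in the accuracy of the derivative data, so that the residual entries in all four blocks line up at the same order $\mathcal{O}(h^{p+1})$. If that compensation were off by one, the conclusion would degrade by a factor of $h$, so this is the only step that deserves explicit care.
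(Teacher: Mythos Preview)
Your argument is correct and is exactly the approach the paper intends: it explicitly says the proof mirrors Lemma~\ref{lem:accuracy-Hermite}, with the key point being that geometric scaling multiplies the derivative rows by $h$ so that all residual entries land at $\mathcal{O}(h^{p+1})$, after which undoing the scaling yields the $\mathcal{O}(h^{p-q+1})$ bound. One minor bookkeeping slip: since the systems for $\{c_q\}$ and $\{d_q\}$ share the same $2m\times(p+1)$ coefficient matrix but have separate right-hand sides, the combined system is $4m\times 2(p+1)$ (block-diagonal), not $4m\times(p+1)$; this does not affect the argument.
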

The proof of this lemma is similar to that of Lemma~\ref{lem:accuracy-Hermite}.
The key in the proof is that after geometric scaling, the component
in the residual vector is $\Theta(1)$, so is the perturbation to
the solution vector. Undoing the geometric scaling, we bound the perturbations
to $c_{q}$ and $d_{q}$ by $\mathcal{O}(h^{p-q+1})$.

The accuracy of H-CMF of curve reconstruction directly follows from
Lemma~\ref{lem:accuracy-coefficients}. 
\begin{prop}
\label{prop:h-cmf-curve}Given a piecewise linear curve, whose vertices
approximate a smooth curve $\gamma$ with an error of $\mathcal{O}(h^{p+1})$
and the tangents are approximated to $\mathcal{O}(h^{p})$, assuming
the rescaled Vandermonde systems are well conditioned, the distance
between each point on the H-CMF reconstruction with degree-p fittings
and its closest point on $\gamma$ is $\mathcal{O}(h^{p+1})$.
\end{prop}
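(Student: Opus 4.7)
The plan is to follow the same template as the proof of Proposition~\ref{prop:h-cmf} for surfaces, with the vector-valued height function $\vec{f}(u)=[v(u),w(u)]^T$ playing the role of the scalar height function $f(\vec{u})$. The three ingredients I will assemble are: (i) translating the tangent data into $\mathcal{O}(h^p)$ approximations of the derivatives $v'(u_i)$ and $w'(u_i)$; (ii) invoking Lemma~\ref{lem:accuracy-coefficients} to conclude that each coefficient $c_q,d_q$ of the Taylor polynomial is recovered to $\mathcal{O}(h^{p-q+1})$; and (iii) summing the Taylor series on the stencil scale $|u|=\mathcal{O}(h)$ to obtain the claimed $\mathcal{O}(h^{p+1})$ error in the reconstruction, then converting this into a bound on distance to $\gamma$.

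First, I would verify ingredient (i). In H-CMF for curves the derivatives fed into the least squares system are $v'(u_i)=\beta_i/\alpha_i$ and $w'(u_i)=\gamma_i/\alpha_i$, where $[\alpha_i,\beta_i,\gamma_i]^T=\vec{Q}_0^T\vec{t}_i$. Since $\vec{t}_i$ is assumed to be $\mathcal{O}(h^p)$-accurate, each component is perturbed by $\mathcal{O}(h^p)$ from the exact tangent. To conclude that the ratios $\beta_i/\alpha_i$ and $\gamma_i/\alpha_i$ are also $\mathcal{O}(h^p)$-accurate, I need $\alpha_i$ to be bounded away from $0$; this is precisely what the normal/tangent safeguard $\theta_i^+$ in the Wendland-based weighting scheme guarantees, and what the adaptive stencil selection ensures when the local frame $\vec{Q}_0$ is built from an approximate tangent. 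A short Taylor-expansion argument with $\alpha_i=\Theta(1)$ then gives the $\mathcal{O}(h^p)$ derivative-data bound required by Lemma~\ref{lem:accuracy-coefficients}.

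Next, I apply Lemma~\ref{lem:accuracy-coefficients} directly, using the well-conditioning hypothesis, to conclude that the computed coefficients $\tilde{c}_q,\tilde{d}_q$ satisfy $|\tilde{c}_q-c_q|,|\tilde{d}_q-d_q|=\mathcal{O}(h^{p-q+1})$ for $0\le q\le p$. Substituting into the Taylor polynomial \eqref{eq:Taylor-polynomial-curve}, for any $u$ with $|u|=\mathcal{O}(h)$ (which is the range of interest on the stencil containing $\vec{x}_0$), the reconstructed height function $\tilde{\vec{f}}(u)$ differs from the true $\vec{f}(u)$ by
\[
\|\tilde{\vec{f}}(u)-\vec{f}(u)\|\le\sum_{q=0}^{p}\|\delta\vec{c}_q\|\,|u|^{q}+\mathcal{O}(|u|^{p+1})=\sum_{q=0}^{p}\mathcal{O}(h^{p-q+1})\mathcal{O}(h^q)+\mathcal{O}(h^{p+1})=\mathcal{O}(h^{p+1}),
\]
where the last $\mathcal{O}(h^{p+1})$ captures the truncation of the Taylor series itself.

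Finally, mapping back through $\vec{Q}_0$, the reconstructed point $\vec{Q}_0[u,\tilde{v}(u),\tilde{w}(u)]^T+\vec{x}_0$ lies within $\mathcal{O}(h^{p+1})$ of the point on $\gamma$ with the same first local coordinate $u$, and the distance from any point to its closest point on $\gamma$ is bounded above by the distance to any other point on $\gamma$, yielding the stated bound. The only step I expect to need mild care is ingredient (i): one must explicitly invoke the safeguard and the fact that $\vec{s}_0$ (hence $\alpha_i$) is a sufficiently good tangent approximation to legitimize dividing by $\alpha_i$; everything else is a straightforward adaptation of the surface proof with scalar $f$ replaced by vector $\vec{f}$ and two derivative equations per point replaced by two.
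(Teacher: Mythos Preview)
Your proposal is correct and follows essentially the same approach as the paper: the paper simply notes that Proposition~\ref{prop:h-cmf-curve} ``directly follows from Lemma~\ref{lem:accuracy-coefficients},'' relying on exactly the template you describe (and spell out in more detail than the paper does), namely the curve analogue of the proof of Proposition~\ref{prop:h-cmf}. Your remark that $\alpha_i$ must be bounded away from $0$ via the tangent-based safeguard is the curve counterpart of the paper's observation that $\gamma_i$ must be bounded away from $0$ in the surface case.
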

The result in Proposition~\ref{prop:h-cmf-curve} also holds for
CMF reconstruction. However, the accuracy of H-WALF curve reconstruction
is also bounded by $h^{6}$, similar to surface reconstructions.
\begin{prop}
\label{prop:curve }Under the same assumption as Proposition~\ref{prop:h-cmf-curve},
the distance between each point on the H-WALF reconstruction of a
smooth curve $\gamma$ with degree-p fittings and its closest point
on $\gamma$ is $\mathcal{O}(h^{p+1}+h^{6})$.
\end{prop}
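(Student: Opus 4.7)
The plan is to mirror the surface proof sketched for Proposition~\ref{prop:h-walf}, adapting each step to the one-parameter setting of space curves. The target is a triangle inequality of the form
\[
\mathrm{dist}(\vec{q},\gamma) \le \|\vec{q}-\vec{q}^{*}\| + \|\vec{q}^{*}-\bar{\vec{q}}^{*}\|,
\]
where $\vec{q}=\xi_{1}\vec{q}_{1}+\xi_{2}\vec{q}_{2}$ is the H-WALF reconstruction of $\vec{p}$ on an edge $\vec{x}_{1}\vec{x}_{2}$, $\vec{q}_{j}$ is the local Hermite-style fitting at $\vec{x}_{j}$ evaluated at $\vec{p}$, $\vec{q}_{j}^{*}$ is the projection of $\vec{p}$ onto $\gamma$ in the local frame at $\vec{x}_{j}$ (projecting in the $\vec{m}_{j},\vec{b}_{j}$ plane along the $\vec{s}_{j}$ direction), $\vec{q}^{*}=\xi_{1}\vec{q}_{1}^{*}+\xi_{2}\vec{q}_{2}^{*}$, and $\bar{\vec{q}}^{*}$ is the closest point to $\vec{q}$ on $\gamma$.

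First, I would bound the consistency term $\|\vec{q}-\vec{q}^{*}\|$. Since $\vec{q}_{j}$ is obtained from the degree-$p$ Hermite-style fitting at $\vec{x}_{j}$ whose coefficients $c_{q},d_{q}$ are approximated to $\mathcal{O}(h^{p-q+1})$ by Lemma~\ref{lem:accuracy-coefficients}, evaluating the local polynomial at a parameter value $u$ with $|u|=\mathcal{O}(h)$ gives $\|\vec{q}_{j}-\vec{q}_{j}^{*}\|=\mathcal{O}(h^{p+1})$. Because $\xi_{1},\xi_{2}\in[0,1]$ with $\xi_{1}+\xi_{2}=1$, taking the weighted average preserves the rate, so $\|\vec{q}-\vec{q}^{*}\|\le\xi_{1}\|\vec{q}_{1}-\vec{q}_{1}^{*}\|+\xi_{2}\|\vec{q}_{2}-\vec{q}_{2}^{*}\|=\mathcal{O}(h^{p+1})$.

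Next I would bound $\|\vec{q}^{*}-\bar{\vec{q}}^{*}\|$ by a two-step argument. Step (a): show that the two footprints differ by $\|\vec{q}_{1}^{*}-\vec{q}_{2}^{*}\|=\mathcal{O}(h^{3})$. The point $\vec{p}$ lies on the piecewise linear curve, which approximates $\gamma$ to $\mathcal{O}(h^{2})$, so $\mathrm{dist}(\vec{p},\gamma)=\mathcal{O}(h^{2})$. The two projection directions $\vec{s}_{1}$ and $\vec{s}_{2}$ (the approximate tangents at $\vec{x}_{1}$ and $\vec{x}_{2}$) differ by $\mathcal{O}(h)$ for a smooth curve, so the two projection footprints $\vec{q}_{j}^{*}$ of the same point $\vec{p}$ onto $\gamma$ differ by $\mathcal{O}(h^{2})\cdot\mathcal{O}(h)=\mathcal{O}(h^{3})$, after using the fact that $\gamma$ is a regular $C^{\infty}$ curve so projection is Lipschitz in the direction near the exact tangent. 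Step (b): since $\vec{q}^{*}$ is a convex combination of two points lying on $\gamma$ at distance $\mathcal{O}(h^{3})$ from each other, and $\gamma$ has bounded curvature, the chord–arc gap gives $\|\vec{q}^{*}-\bar{\vec{q}}^{*}\|=\mathcal{O}(\|\vec{q}_{1}^{*}-\vec{q}_{2}^{*}\|^{2})=\mathcal{O}(h^{6})$. Combining the two bounds yields $\mathrm{dist}(\vec{q},\gamma)=\mathcal{O}(h^{p+1}+h^{6})$.

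The main obstacle, as in the surface case, is step (a): quantifying how projecting $\vec{p}$ onto $\gamma$ along two nearby but distinct directions produces footprints that differ by $\mathcal{O}(h^{3})$ rather than only $\mathcal{O}(h^{2})$. This requires exploiting the cancellation between the $\mathcal{O}(h^{2})$ distance from $\vec{p}$ to $\gamma$ and the $\mathcal{O}(h)$ angular deviation between $\vec{s}_{1}$ and $\vec{s}_{2}$, together with the fact that both directions are $\mathcal{O}(h)$-perturbations of the true tangent at a nearby point of $\gamma$. A clean way to formalize this is to parameterize $\gamma$ by arclength locally and expand the projection map as a function of the direction vector, showing the leading-order term vanishes because both $\vec{s}_{j}$ are first-order approximations to the same true tangent. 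The rest of the argument is essentially identical to the surface case treated in \cite{Jiao2011RHO}, with the one-dimensional barycentric average replacing the triangular one.
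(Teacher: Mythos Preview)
Your proposal is correct and matches the approach the paper itself indicates: the paper omits the proof of this proposition entirely, saying only that ``the bound of $h^{6}$ is due to the discrepancy of local coordinate systems at the two vertices of an edge,'' and defers to the surface-case argument in \cite{Jiao2011RHO}. Your write-up supplies exactly that adaptation --- the triangle-inequality split, Lemma~\ref{lem:accuracy-coefficients} for the $\mathcal{O}(h^{p+1})$ term, and the $\mathcal{O}(h^{3})$ footprint discrepancy squared to $\mathcal{O}(h^{6})$ via the chord--arc estimate --- so you have in fact provided more detail than the paper does. One small wording issue: the projection $\vec{q}_{j}^{*}$ is taken in the plane through $\vec{p}$ orthogonal to $\vec{s}_{j}$ (i.e., you fix the $u$-coordinate and read off the exact height), so calling $\vec{s}_{1},\vec{s}_{2}$ the ``projection directions'' is slightly off; what matters is that the two orthogonal planes differ by an $\mathcal{O}(h)$ tilt, which together with $\mathrm{dist}(\vec{p},\gamma)=\mathcal{O}(h^{2})$ gives the $\mathcal{O}(h^{3})$ footprint separation you claim.
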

The same result holds for WALF reconstruction. The bound of $h^{6}$
is due to the discrepancy of local coordinate systems at the two vertices
of an edge; we omit the proof here.

\subsection{$G^{0}$ Continuity of H-WALF Reconstruction}

It is clear that for smooth curves, $G^{0}$ continuity is guaranteed
by H-WALF (and WALF) curve reconstructions. For piecewise smooth curves,
some care must be taken. First, when selecting stencils for points
near a corner, we must make sure not to select points across a corner.
This can be done by virtually splitting the curves at the corners
into smooth segments, and then reconstructing each smooth segment
independently. Alternatively, we can use the safeguard $\theta^{+}$
in the weighting scheme to filter out points whose tangent directions
have a large angle against that at the origin. Second, at a corner
$\vec{x}_{0}$, we need to construct a local fitting within each edge
incident on $\vec{x}_{0}$ using the one-sided tangent as $\vec{s}_{0}$
for constructing the local frame. Third, if a corner has more than
two incident edges, we must enforce the local fit within each of its
incident edges to be interpolatory at $\vec{x}_{0}$, so that all
the reconstructed curves would meet at $\vec{x}_{0}$. Under this
construction, H-WALF can deliver accurate $G^{0}$ continuous reconstructions
for piecewise smooth curves.

\section{\label{sec:Feature-Aware-HOSR}Iterative Feature-Aware Parametric
Surfaces Reconstruction}

The preceding two sections focused on the reconstructions of smooth
surfaces and of feature curves on a piecewise smooth surface, respectively.
In this section, we combine the two techniques to reconstruct a piecewise
smooth surface to high-order accuracy with guaranteed $G^{0}$ continuity.
It is challenging to achieve both accuracy and continuity simultaneously.
For example, WALF (or H-WALF) does not guarantee continuity along
sharp features. This is because the reconstructed feature curves in
general do not match with the edges of the reconstructed surface patches
in their incident triangles. This is illustrated in Figure~\ref{fig:walf-disc},
where the curve reconstruction of feature edge $\boldsymbol{v}_{1}\boldsymbol{v}_{2}$
may have a different shape than the corresponding edge in the surface
reconstructions of triangles $\boldsymbol{v}_{1}\boldsymbol{v}_{2}\boldsymbol{v}_{3}$
and $\boldsymbol{v}_{1}\boldsymbol{v}_{2}\boldsymbol{v}_{4}$. A linear
combination of the reconstructed curve and the reconstructed surfaces
can recover continuity but may compromise the convergence rate. Similarly,
the parametric surface elements described in Section~\ref{subsec:High-Order-Parametric-Elements}
can recover continuity, but they may not deliver optimal convergence
rate near sharp features. In this section, we propose an iterative
procedure to construct parametric elements, which achieves both accuracy
and continuity. 
\begin{figure}
\begin{centering}
\includegraphics[width=0.75\textwidth]{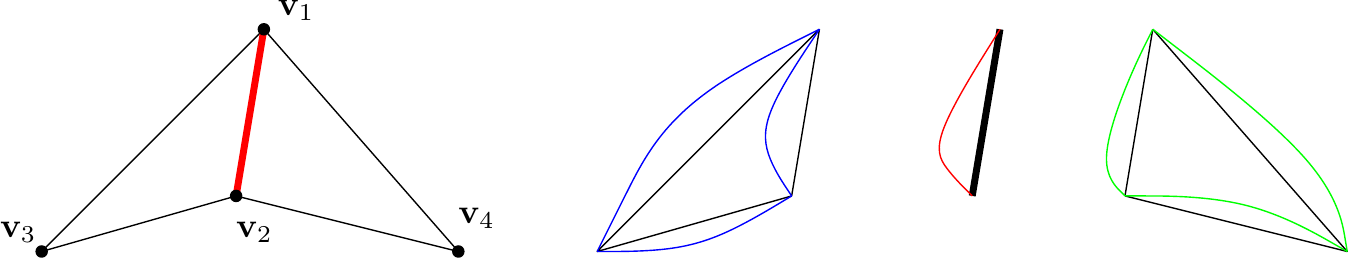}
\par\end{centering}
\caption{\label{fig:walf-disc}The lack of continuity between the WALF reconstructed
feature edge and the WALF-reconstructed surface patches in incident
triangles.}
\end{figure}

\subsection{\label{subsec:Smooth-parameterization}Accuracy and Stability of
Parametric Surfaces}

The parametric elements in Section~\ref{subsec:High-Order-Parametric-Elements}
provide a viable approach for reconstructing $G^{0}$ continuous surfaces.
The key issue is the placement of the mid-edge and mid-face nodes.
In general, this is a two-step procedure: first, define some intermediate
position $\vec{p}_{i}$ for each node; second, project $\vec{p}_{i}$
onto $\vec{x}_{i}$ using high-order reconstruction or onto the exact
surface if available. Both steps can affect the accuracy and the convergence
rate of the reconstructed surface. However, the importance of the
first step is more complicated and often overlooked. In the following,
we analyze the impact of both steps, with an emphasis on the first
step.

Consider a degree-$p$ element with $n$ nodes. Let $\vec{\xi}_{i}$
denote the natural coordinates of the $i$th node of the element.
Without loss of generality, we shall assume the nodes are composed
of LESGLS points. Let $\vec{\xi}$ be the natural coordinates of point
$\vec{x}(\vec{\xi})$, as defined in \eqref{eq:param-surf-element}.
Let $\Pi$ denote the projection from the intermediate points $\vec{p}_{i}$
onto $\vec{x}_{i}$, and then
\begin{equation}
\vec{x}(\vec{\xi})=\sum_{i=1}^{n}N_{i}(\vec{\xi})\vec{x}_{i}=\sum_{i=1}^{n}N_{i}(\vec{\xi})\Pi(\vec{p}_{i}).\label{eq:parametric-surface}
\end{equation}
For $\vec{x}(\vec{\xi})$ to be accurate, it is important that it
is smooth to degree $p+1$ in the following sense.
\begin{defn}
\label{def:smooth-param}The parameterization $\vec{x}(\vec{\xi})$
is \emph{smooth to degree $q$} if the partial derivatives of $\vec{x}$
with respect to $\vec{\xi}$ are uniformally bounded up to $q$th
order.
\end{defn}
The correlation of the smoothness and the accuracy of the parametric
surface is established by the following theorem.
\begin{thm}
\label{thm:error-smooth-param-surf} Given a smooth surface $\Gamma$
that is continuously differentiable to order $p+1$, assume the parameterization
in \eqref{eq:parametric-surface} is smooth to degree $p+1$, the
nodes $\vec{x}_{j}$ are reconstructed using degree-p H-CMF (or CMF),
and the interpolation over the parametric element is stable. The reconstructed
parametric surface approximates $\Gamma$ to $\mathcal{O}(h^{p+1})$,
where $h$ is a characteristic length measure of the local stencil.
\end{thm}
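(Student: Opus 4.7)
The plan is to decompose the total error via an intermediate "ideal" interpolant that uses exact points on $\Gamma$ instead of the reconstructed nodes. Specifically, for each parametric node $\vec{\xi}_i$, let $\tilde{\vec{x}}_i$ denote the exact point on $\Gamma$ that the reconstruction procedure is targeting (i.e., the projection of the intermediate point $\vec{p}_i$ onto $\Gamma$, or equivalently $\Pi(\vec{p}_i)$ applied in the idealized sense), and define $\tilde{\vec{x}}(\vec{\xi}) = \sum_{i=1}^{n} N_i(\vec{\xi})\tilde{\vec{x}}_i$. For $\vec{\xi}$ in the reference triangle, I will let $\vec{x}^*(\vec{\xi})$ denote a point on $\Gamma$ lying in the corresponding patch. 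The triangle inequality then gives
\[
\|\vec{x}(\vec{\xi}) - \vec{x}^*(\vec{\xi})\| \;\leq\; \|\vec{x}(\vec{\xi}) - \tilde{\vec{x}}(\vec{\xi})\| \;+\; \|\tilde{\vec{x}}(\vec{\xi}) - \vec{x}^*(\vec{\xi})\|,
\]
and I will bound each term by $\mathcal{O}(h^{p+1})$ separately.

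For the first term, note that $\vec{x}(\vec{\xi}) - \tilde{\vec{x}}(\vec{\xi}) = \sum_i N_i(\vec{\xi})(\vec{x}_i - \tilde{\vec{x}}_i)$. By Proposition~\ref{prop:h-cmf} applied to each node reconstruction, $\|\vec{x}_i - \tilde{\vec{x}}_i\| = \mathcal{O}(h^{p+1})$. The stability assumption on the parametric element amounts to a uniform bound on the Lebesgue constant $\Lambda = \max_{\vec{\xi}}\sum_i |N_i(\vec{\xi})|$ (which is what motivates the use of LEBGLS nodes), so that $\|\vec{x}(\vec{\xi}) - \tilde{\vec{x}}(\vec{\xi})\| \leq \Lambda\,\max_i\|\vec{x}_i - \tilde{\vec{x}}_i\| = \mathcal{O}(h^{p+1})$. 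For the second term, I will invoke the classical error bound for degree-$p$ Lagrange interpolation: there is a smooth map $\vec{\Phi}:\Delta_{\mathrm{ref}}\to\Gamma$ with $\vec{\Phi}(\vec{\xi}_i)=\tilde{\vec{x}}_i$ whose behaviour is controlled by the $C^{p+1}$ regularity of $\Gamma$ together with the degree-$(p+1)$ smoothness of the parameterization $\vec{x}(\vec{\xi})$ assumed in the theorem. The standard interpolation estimate $\|\vec{\Phi} - \Pi_p\vec{\Phi}\|_\infty \leq C\,h^{p+1}|\vec{\Phi}|_{C^{p+1}}$ then gives $\|\tilde{\vec{x}}(\vec{\xi}) - \vec{\Phi}(\vec{\xi})\| = \mathcal{O}(h^{p+1})$, and since $\vec{\Phi}(\vec{\xi})\in\Gamma$, this is also a bound on the distance to $\Gamma$ itself.

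The main obstacle is in the second step: producing the auxiliary smooth map $\vec{\Phi}$ and justifying that its $C^{p+1}$ seminorm behaves like the smoothness-to-degree-$(p+1)$ assumption on $\vec{x}(\vec{\xi})$, uniformly in $h$. The cleanest way is to construct $\vec{\Phi}$ locally as a composition of the smooth polynomial $\vec{x}(\vec{\xi})$ (slightly perturbed so it interpolates $\tilde{\vec{x}}_i$ rather than $\vec{x}_i$) with the closest-point projection onto $\Gamma$, which is well-defined and $C^{p+1}$ in a tubular neighborhood thanks to the $C^{p+1}$ regularity of $\Gamma$. The bounds on derivatives of $\vec{\Phi}$ through order $p+1$ then follow from the chain rule applied to the parameterization (whose derivatives are bounded by hypothesis) and the projection (whose derivatives depend only on $\Gamma$). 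Once this is in place, the two pieces combine via the triangle inequality to yield the claimed $\mathcal{O}(h^{p+1})$ bound, which is valid uniformly over the element.
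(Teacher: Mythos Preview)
Your proposal is correct and follows essentially the same route as the paper: decompose via an intermediate interpolant through exact points on $\Gamma$, bound the node-perturbation term by the Lebesgue constant times the H-CMF nodal error $\mathcal{O}(h^{p+1})$, and bound the remaining interpolation-error term via the Taylor/Lagrange remainder using the smoothness-to-degree-$(p+1)$ hypothesis. The paper's version is terser---it takes the intermediate nodes to be the closest points on $\Gamma$ to the reconstructed $\vec{x}_i$ and writes the interpolation bound directly as $\sum_{j+k=p+1}\bigl\Vert\partial^{p+1}\vec{x}/\partial\xi^{j}\partial\eta^{k}\bigr\Vert_\infty\,|\xi^{j}\eta^{k}|$---whereas your explicit construction of $\vec{\Phi}$ as the closest-point projection composed with the parameterization spells out, via the chain rule, why those $(p+1)$st derivatives are uniformly bounded.
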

\begin{proof}
Let $\hat{\vec{x}}(\vec{\xi})$ denote the closest point to $\vec{x}(\vec{\xi})$
on $\Gamma$, and $\hat{\vec{x}}_{i}$ the closest point to $\vec{x}_{i}$
on $\Gamma$. Then, 
\begin{align}
\left\Vert \vec{x}(\vec{\xi})-\hat{\vec{x}}(\vec{\xi})\right\Vert  & =\left\Vert \sum_{i=1}^{n}N_{i}(\vec{\xi})\Pi(\vec{p}_{i})-\hat{\vec{x}}(\vec{\xi})\right\Vert \label{eq:err-param-surf1}\\
 & \leq\left\Vert \sum_{i=1}^{n}N_{i}(\vec{\xi})\hat{\vec{x}}_{i}-\hat{\vec{x}}(\vec{\xi})\right\Vert +\left\Vert \sum_{i=1}^{n}N_{i}(\vec{\xi})\left(\Pi(\vec{p}_{i})-\hat{\vec{x}}_{i}\right)\right\Vert .\label{eq:err-param-surf2}
\end{align}
With degree-$p$ CMF or H-CMF, the second term is $\mathcal{O}(h^{p+1})$,
because
\begin{equation}
\left\Vert \sum_{i=1}^{n}N_{i}(\vec{\xi})\left(\Pi(\vec{p}_{i})-\hat{\vec{x}}_{i}\right)\right\Vert \leq\left(\sum_{i=1}^{n}\left|N_{i}(\vec{\xi})\right|\right)\max_{i}\left\Vert \left(\Pi(\vec{p}_{i})-\hat{\vec{x}}_{i}\right)\right\Vert =\mathcal{O}(h^{p+1}),\label{eq:err-term2}
\end{equation}
and $\sum_{i=1}^{n}\left|N_{i}(\vec{\xi})\right|=\mathcal{O}(1)$
for stable elements. From the Taylor series, and in particular the
mean-value forms of its remainder, we can bound the first term in
\eqref{eq:err-param-surf2} by
\begin{equation}
\left\Vert \sum_{i=1}^{n}N_{i}(\vec{\xi})\hat{\vec{x}}_{i}-\hat{\vec{x}}(\vec{\xi})\right\Vert \leq\sum_{j,k\geq0}^{j+k=p+1}\left\Vert \frac{\partial^{p+1}\vec{x}}{\partial\xi^{j}\partial\eta^{k}}\right\Vert _{\infty}\left|\xi^{j}\eta^{k}\right|.\label{eq:bound-param-interp}
\end{equation}
Under the smoothness assumption, $\left\Vert \dfrac{\partial^{p+1}\vec{x}}{\partial\xi^{j}\partial\eta^{k}}\right\Vert _{\infty}=\mathcal{O}(1)$,
so $\left\Vert \vec{x}(\vec{\xi})-\hat{\vec{x}}(\vec{\xi})\right\Vert =\mathcal{O}(h^{p+1})$.
\end{proof}
Theorem~\ref{thm:error-smooth-param-surf} still holds if we replace
the H-CMF reconstruction with the exact surface, because the error
in the first term in \eqref{eq:err-param-surf2} is still bounded
by $\mathcal{O}(h^{p+1})$. We could also use H-WALF (or WALF) for
reconstruction, but it is undesirable because the error would be bounded
by $\mathcal{O}(h^{6})$, and it is less stable than H-CMF due to
the one-sided stencils near sharp features.

There are two key assumptions in Theorem~\ref{thm:error-smooth-param-surf}.
First, the interpolation must be stable, in that $\sum_{i=1}^{n}\left|N_{i}(\vec{\xi})\right|$
is bounded, ideally by a small constant. This may not hold for high-degree
elements with equally space nodes, but this assumption is valid for
elements with LESGLS nodes. Second, it assumes that $\vec{x}(\vec{\xi})$
is smooth to degree $p-1$ in order to bound the interpolation error
in \eqref{eq:bound-param-interp}. This assumption requires some special
attention in selecting the intermediate points $\vec{p}_{i}$ near
features, as we discuss next.

\subsection{\label{subsec:Smoothness-of-Parameterization}Smoothness of Parameterization
Near Features}

When reconstructing high-order surfaces from a surface triangulation,
a somewhat standard approach is to project the mid-edge and mid-face
nodes in the piecewise linear triangle onto the exact surface or a
high-order surface reconstruction. In other words, 
\begin{equation}
\vec{p}_{i}=\sum_{j=1}^{3}N_{j}^{(1)}(\vec{\xi}_{i})\vec{x}_{j}^{(1)},\qquad1\leq i\leq n\label{eq:intermediate-points-linear}
\end{equation}
serve as the intermediate points, where the $N_{j}^{(1)}$ denote
the shape functions of the linear elements and the $\vec{x}_{j}^{(1)}$
are the coordinates of the vertices of the triangle. Here, we shall
focus on the analysis of these intermediate points, and we will assume
that the projection $\vec{x}_{i}=\Pi\vec{p}_{i}$ is onto the exact
surface, so that the second term in \eqref{eq:err-param-surf2} is
0.

Near sharp features, the intermediate points in \eqref{eq:intermediate-points-linear}
can lead to nonsmooth parameterizations, if the projection $\Pi$
cause some abrupt contraction of the mid-edge and the mid-face nodes.
This can happen if the geometry is the union of two spheres that intersect
along a feature curve, as illustrated in Figure~\ref{fig:distorted-param}(a).
We refer to this feature curve as a \emph{bubble-junction curve},
because the union of the two spheres resemble the envelop of a double
bubble. Consider a triangle incident on the feature curve. In Figure~\ref{fig:distorted-param}(b),
we illustrate the projections of the mid-edge nodes of a degree-6
triangle along the left edge onto the exact circle, and the projections
of the other nodes onto the exact sphere. Due to the discontinuity
of the normal directions, the mid-edge nodes on the feature curve
and the adjacent mid-face nodes contract toward each other abruptly.
Figure~\ref{fig:distorted-param}(c) shows the contour plot of the
inverse area measure, i.e. $1\left/\sqrt{\vec{J}^{T}\vec{J}}\right.$,
where $\vec{J}$ denote the Jacobian matrix of $\vec{x}(\vec{\xi})$.
The inverse area measure is clearly much larger near the feature curve.
This non-uniformity can lead to large higher-order derivatives, and
it worsen as the degree of the polynomial increases. Hence, the convergence
rate using high-degree fittings may be compromised, as we will demonstrate
in Section~\ref{sec:Numerical-Results}. 
\begin{figure}
\subfloat[Double sphere.]{\centering{}\includegraphics[width=0.33\columnwidth]{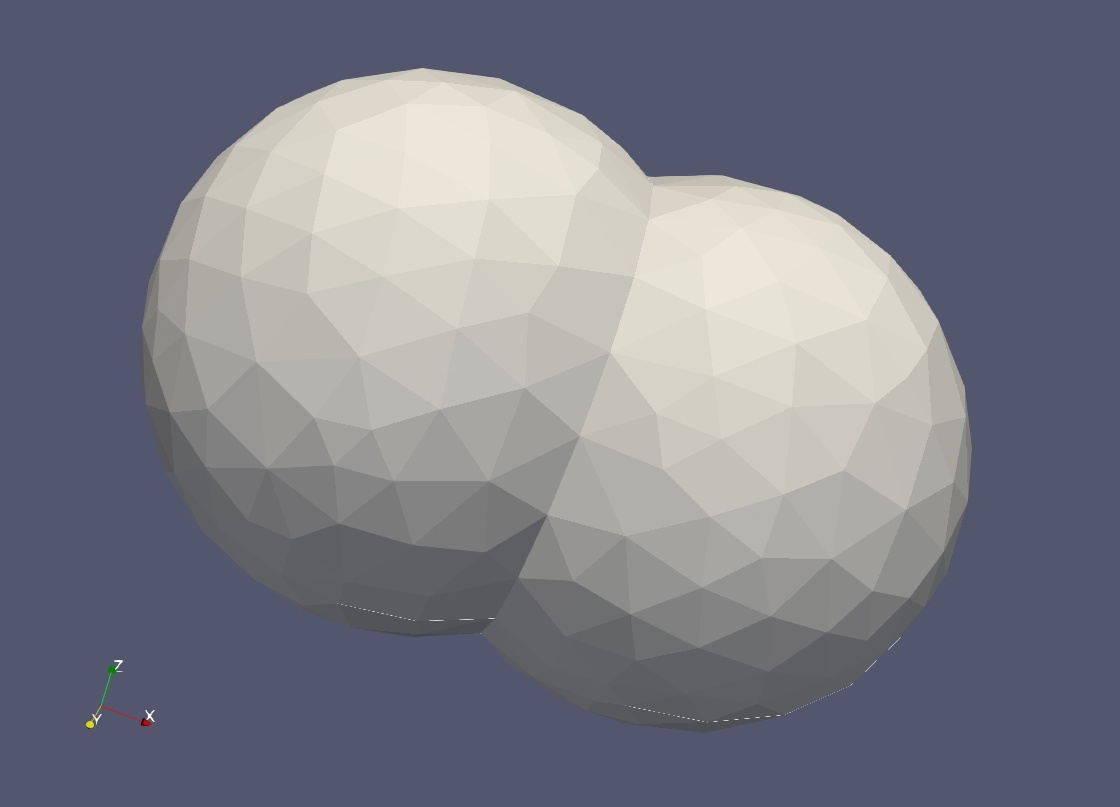}}\subfloat[Mid-edge and mid-face points.]{\centering{}\includegraphics[width=0.33\textwidth]{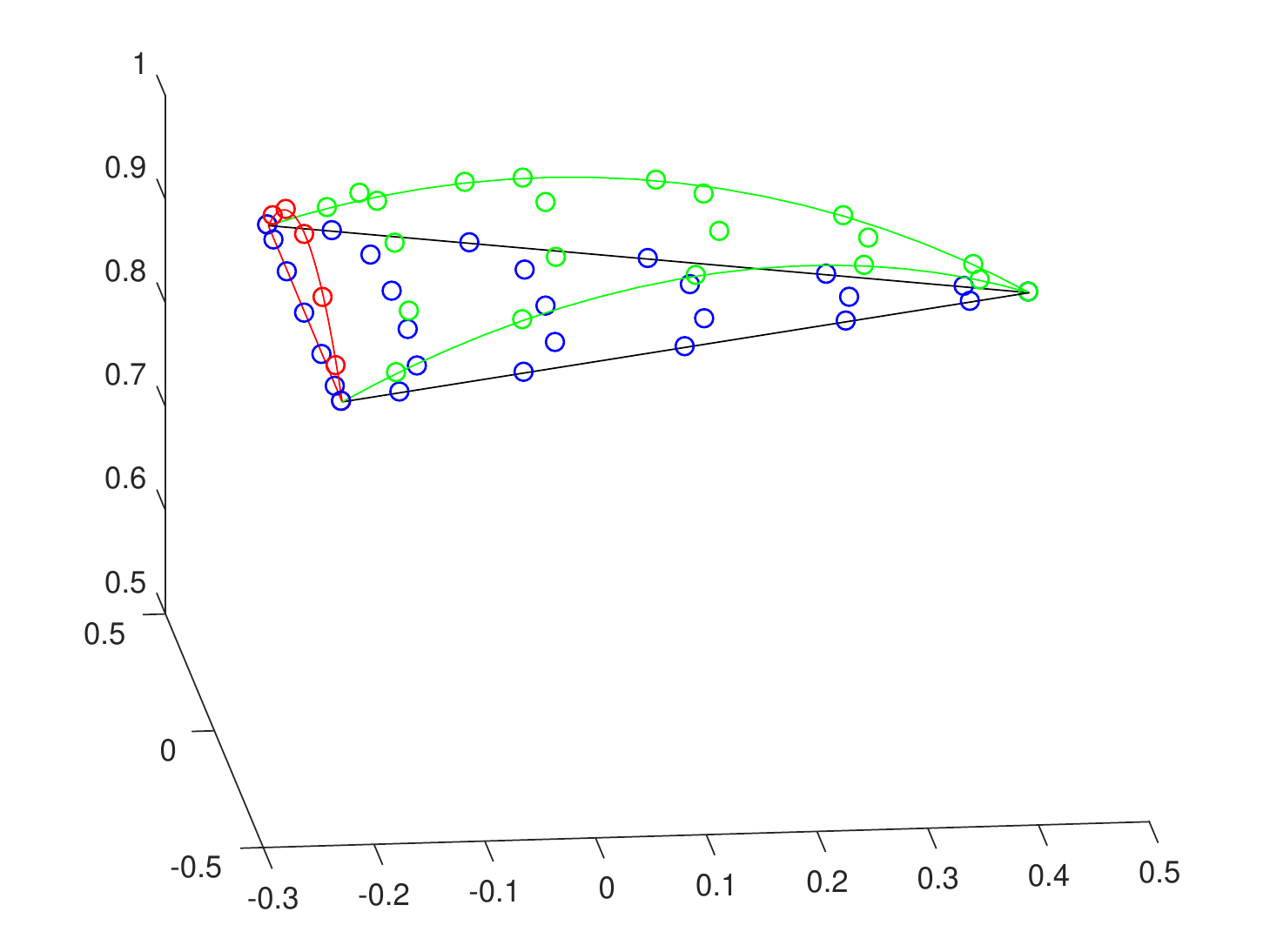}}\subfloat[Inverse area measure.]{\begin{centering}
\includegraphics[width=0.33\textwidth]{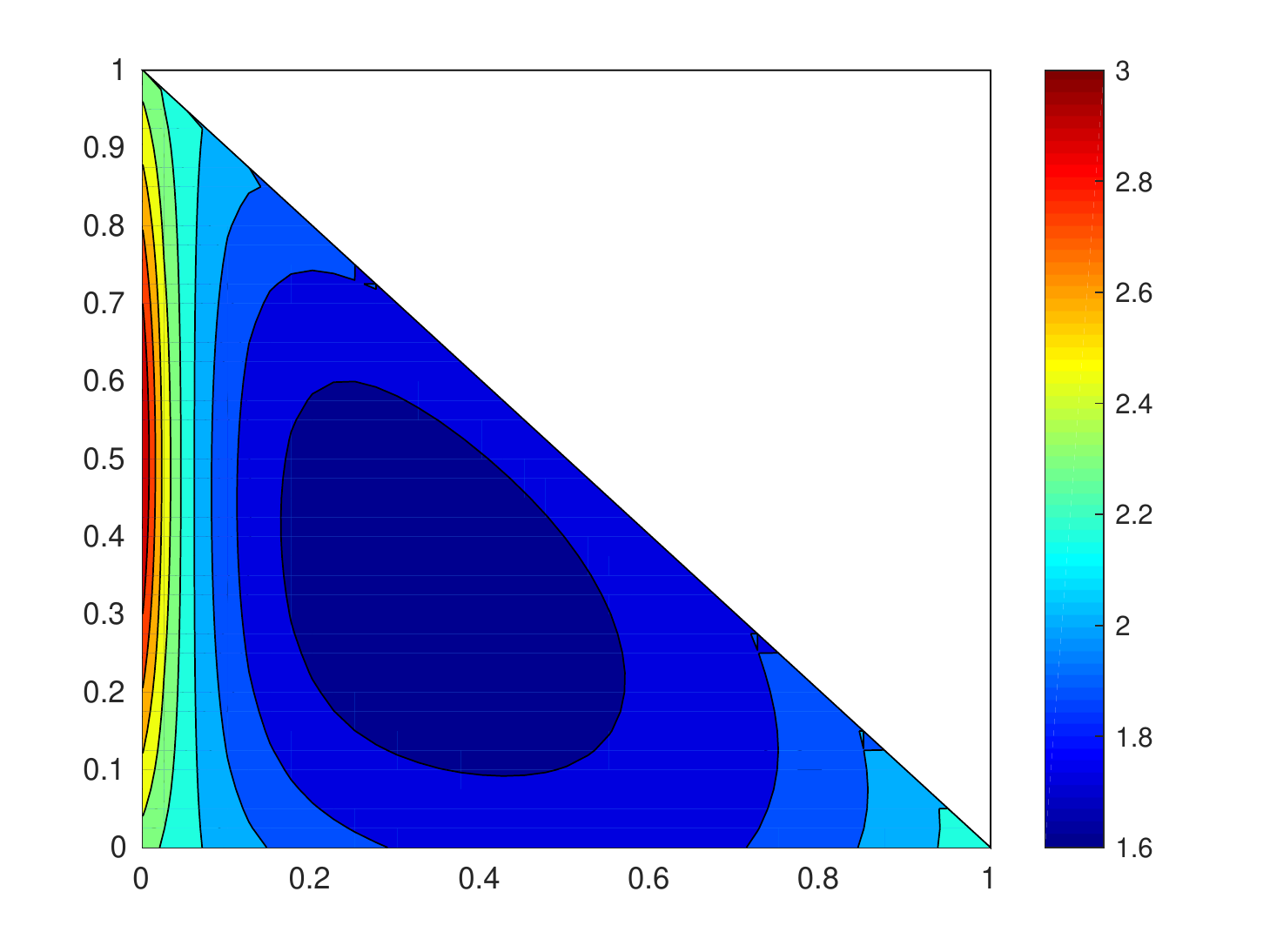}
\par\end{centering}
}\caption{\label{fig:distorted-param}Nonsmooth parameterization near a bubble-junction
feature.}
\end{figure}

One might attempt to improve the smoothness of $\vec{x}(\vec{\xi})$
by using some optimization procedure, but it would be difficult because
$\Pi$ is discontinuous near sharp features and ensuring high-degree
continuity may require the consideration of high-order derivatives
in the objective function. Instead, we can improve the smoothness
by an explicit construction of the intermediate points $\vec{p}_{i}$
using intermediate-degree polynomial interpolation. Specifically,
let 
\begin{equation}
\vec{p}_{i}=\sum_{j=1}^{n^{(q)}}N_{j}^{(q)}(\vec{\xi}_{i})\vec{x}_{j}^{(q)},\qquad1\leq i\leq n,\label{eq:intermediate-nodes-q}
\end{equation}
where $N_{j}^{(q)}$ and $n^{(q)}$ are the shape functions and the
number of nodes of degree-$q$ elements, respectively, where $1<q<p$,
and $\vec{x}_{j}^{(q)}$ are the nodes of degree-$q$ elements that
have taken into account the curved feature edges. For example, in
the double-sphere example above, we use a quadratic element (i.e.,
$q=2$) to construct the intermediate points. Figure~\ref{fig:smoother-param}(a)
shows the intermediate positions of the quadratic element, and Figure~\ref{fig:smoother-param}(b)
shows the projection of these intermediate points onto the exact curve
and surface. From the contour plot of the inverse area measure in
Figure~\ref{fig:smoother-param}(c), it is clear that this new parameterization
is much smoother than that in Figure~\ref{fig:distorted-param}.
We can apply this idea iteratively to achieve higher-degree smoothness,
as we describe next.

\begin{figure}
\subfloat[Intermediate points.]{\includegraphics[width=0.33\textwidth]{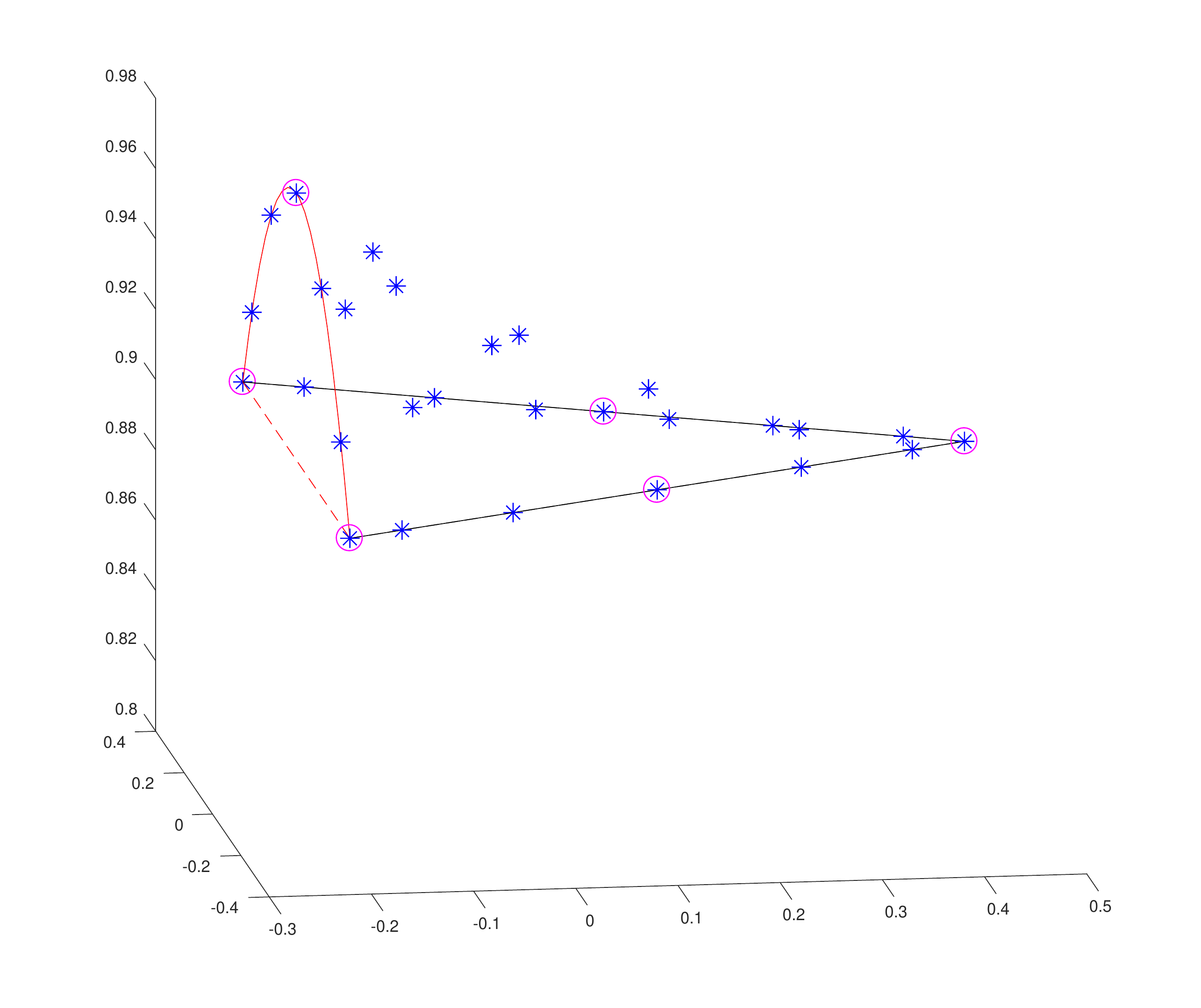}}\subfloat[Projected points.]{\includegraphics[width=0.33\textwidth]{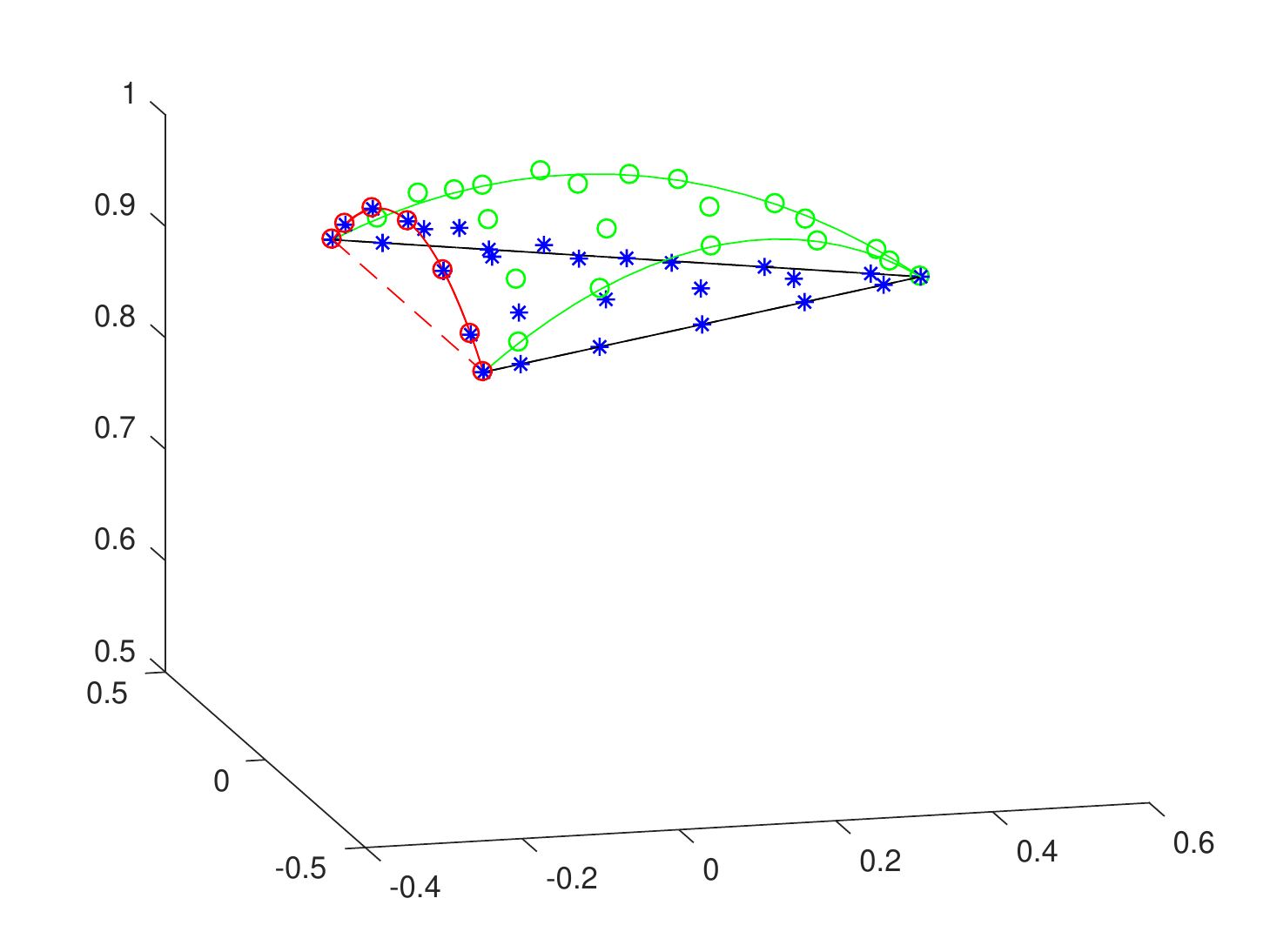}}\subfloat[Inverse area measure.]{\includegraphics[width=0.33\textwidth]{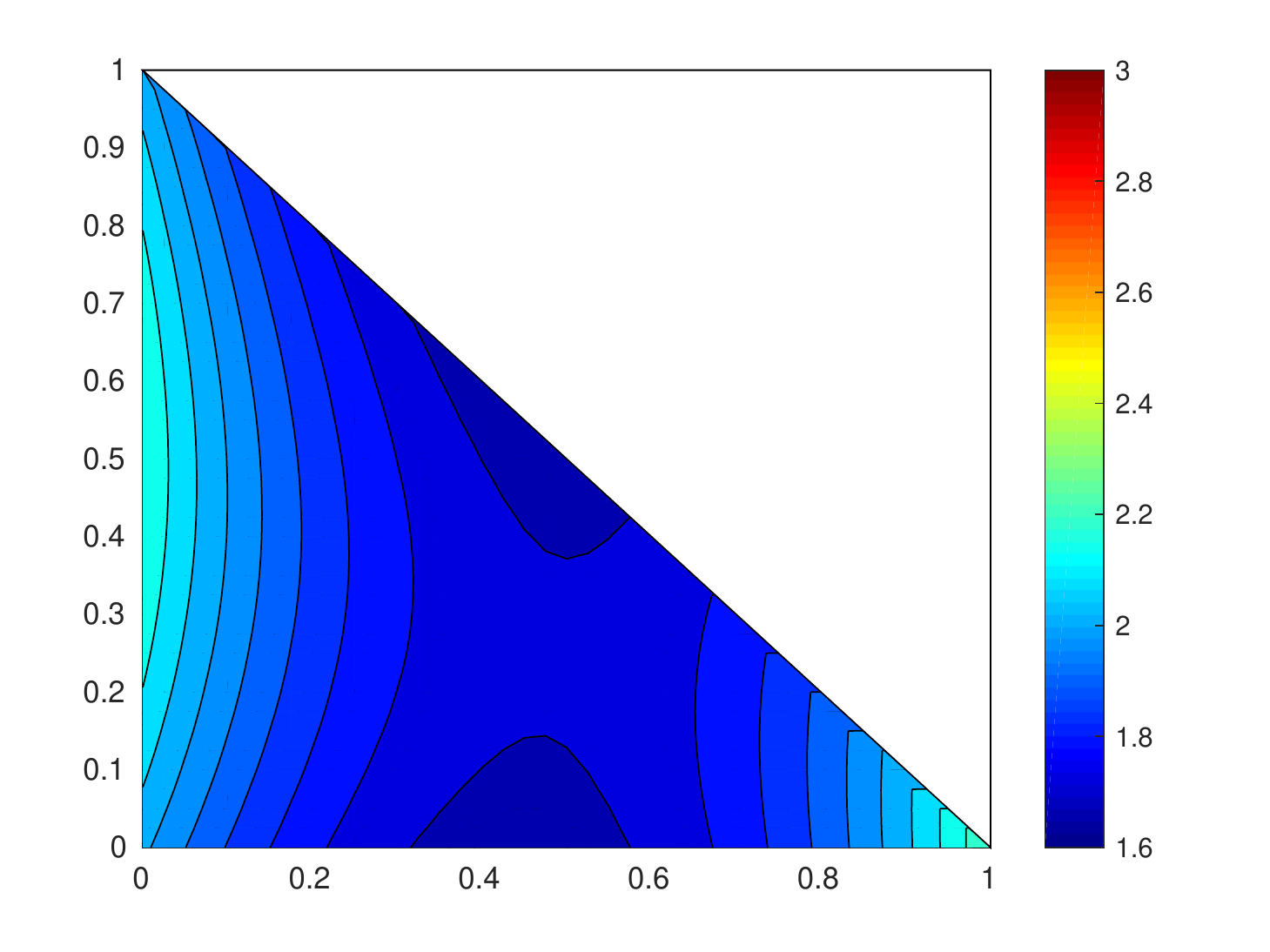}

}\caption{\label{fig:smoother-param}Smoother parameterization by constructing
intermediate points using quadratic elements.}
\end{figure}

\subsection{\label{subsec:Iterative-Feature-Aware-Paramete}Iterative Feature-Aware
Parameterization}

The two-level construction in Figure~\ref{fig:smoother-param} improves
the uniformity of the area measure, which contain information of only
first-order derivatives. To achieve higher-degree smoothness, we use
multiple levels of intermediate nodes. Specifically, for each triangle
incident on a feature (boundary) curve, we use a degree-$q$ interpolation
to define its intermediate nodes, where 
\begin{equation}
q=2^{\left\lceil \log p\right\rceil -1}.\label{eq:iterative-degree-reduction}
\end{equation}
The nodes $\vec{x}_{j}^{(q)}$ for the degree-$q$ element in \eqref{eq:intermediate-nodes-q},
especially the mid-edge nodes on feature edges and the mid-face nodes,
are computed recursively using degree $2^{\left\lceil \log q\right\rceil -1}$
interpolation. We refer to this procedure as \emph{Iterative Feature
Aware (IFA) parameterization}. The exponential decrease of the degree
is due to the observation that the projection from the intermediate
nodes from degree-$q$ interpolation introduces an $\mathcal{O}(h^{2q+2})$
perturbation to the parameterization.

In summary, the overall algorithm for constructing a degree-$p$ parametric
surface proceeds as follows. First, use degree-$p$ H-CMF (or H-WALF
if $p\leq4$) to compute the mid-edge and mid-face nodes of degree-$p$
elements for all the triangles without a feature or border edge. Then
for each element with a feature or border edge, we apply IFA parameterization
to obtain its mid-edge and mid-face nodes. Finally, we use the nodes
in the original input mesh along the mid-edge and mid-face nodes to
define a $G^{0}$ continuous degree-$p$ parametric surface of guaranteed
$(p+1)$st order accuracy.

An important application of this parametric surface is high-order
finite element methods. In that setting, it is also important for
the elements in the volume mesh (i.e., the tetrahedra) to have smooth
parameterizations near boundaries. The IFA parameterization described
here can be adapted to placing the mid-face and mid-cell nodes in
these tetrahedra, which can improve the accuracy of FEM, as we demonstrate
in Section~\ref{subsec:FEM}. These IFA parameterizations may seem
expensive for individual elements. However, this extra cost is negligible,
because the number of elements incident on sharp features or the boundary
is lower order compared to the total number of elements, due to the
surface-to-volume ratio.

\section{\label{sec:Numerical-Results}Numerical Results}

In this section, we assess the proposed high-order reconstruction
numerically, with a focus on the improved accuracy due to the Hermite-style
fittings as well as iterative feature-aware parameterization. In addition,
we also evaluate the effectiveness of the high-order reconstruction
as an alternative of the exact geometry for high-order FEM. We will
evaluate the methods using two surfaces, including the double sphere
in Figure~\ref{fig:distorted-param}(a) and a torus in Figure~\ref{fig:test-geometries}(a).
To evaluate curve reconstruction, we consider a conical helix in Figure~\ref{fig:test-geometries}(b),
with the parametric equations 
\[
\vec{r}(t)=(t\cos(6t),t\sin(6t),t),\:(0\leqslant t\leqslant2\pi).
\]
Although simple, these geometries are representative of piecewise
surfaces with different curvatures and sharp features.

To evaluate the convergence rates, we generate a series of triangular
meshes for each geometry using mesh refinement and compute the pointwise
error as the distance between a reconstructed point and its closest
point on the exact surface or curve. Given an error vector $\vec{e}$
with $n$ points, we use a normalized $l_{2}$-norm of $\vec{e}$,
computed as the standard vector $2$-norm by $\sqrt{n}$, i.e.,
\begin{equation}
\left\Vert \vec{e}\right\Vert _{\ell_{2}}=\frac{\left\Vert \vec{e}\right\Vert _{2}}{\sqrt{n}}=\sqrt{\frac{\sum_{i=1}^{n}e_{i}^{2}}{n}}.\label{eq:l2-pointwise}
\end{equation}
Given a series of $k$ meshes, let $\vec{e}_{i}$ denote the error
vector on the $i$th meshes and $n_{i}$ denote the number of points
in the $i$th mesh, where level-1 denotes the coarsest mesh. The average
convergence rate in $\ell_{2}$-norm is then computed as 
\[
\text{convergence rate}=d\frac{\log(\left\Vert \vec{e}_{1}\right\Vert _{\ell_{2}}/\left\Vert \vec{e}_{k}\right\Vert _{\ell_{2}})}{\log(n_{k}/n_{1})},
\]
where $d=1$, 2, and $3$ for curve, surface, and volume meshes, respectively.

\begin{figure}
\begin{centering}
\subfloat[\label{fig:Torus}Torus.]{\begin{centering}
\includegraphics[width=0.5\textwidth]{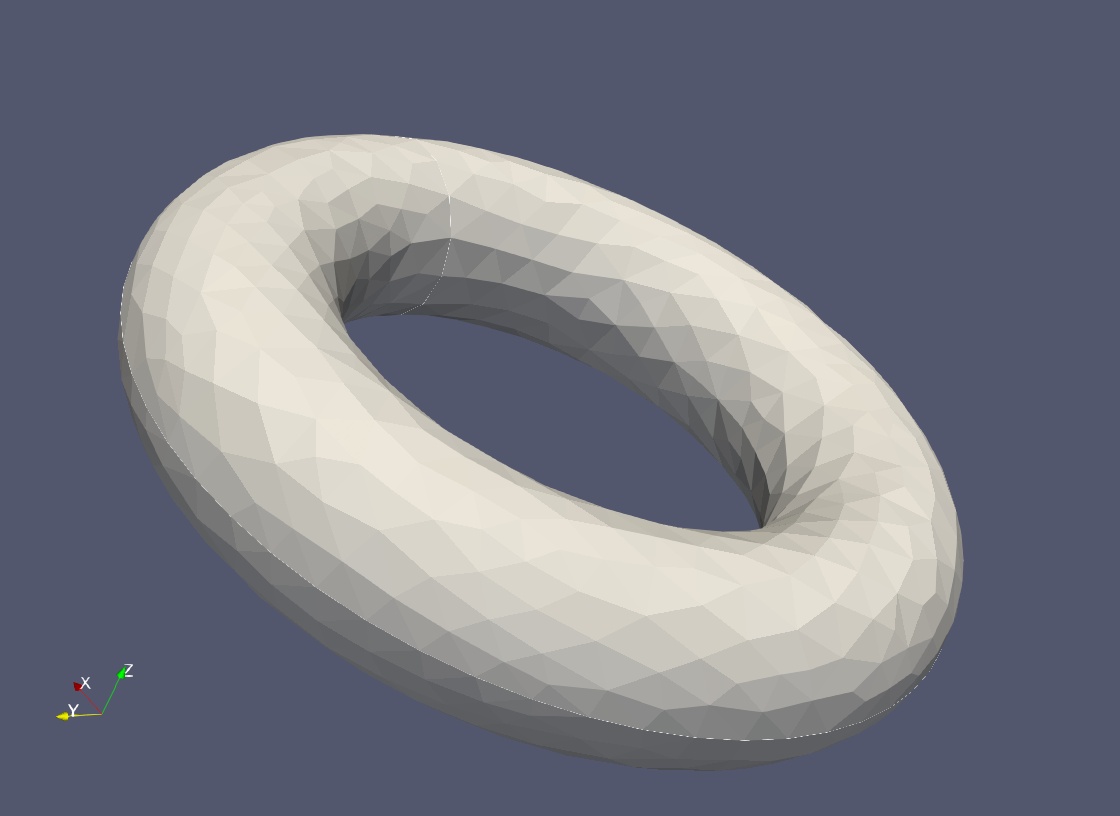}
\par\end{centering}
}\subfloat[\label{fig:coarsest-conical-helix}Conical helix.]{\begin{centering}
\includegraphics[width=0.5\textwidth]{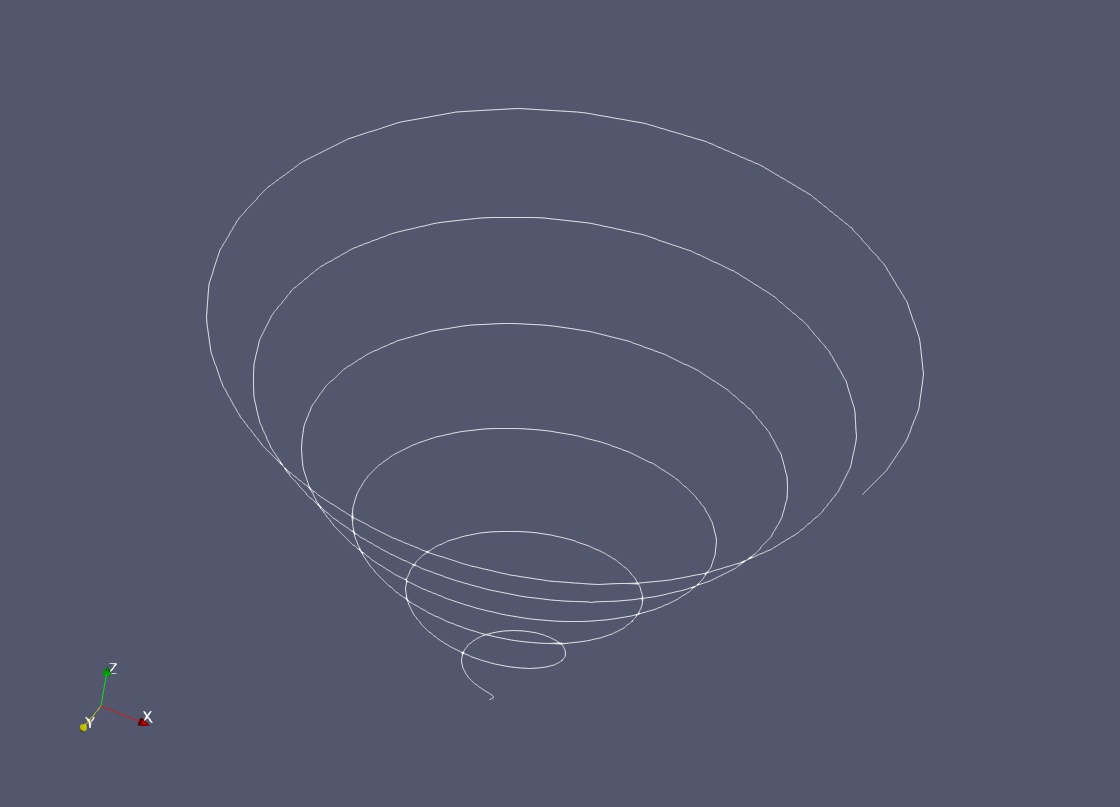}
\par\end{centering}
}
\par\end{centering}
\caption{\label{fig:test-geometries}Test geometries for convergence studies.}
\end{figure}

\subsection{Point-Based vs. Hermite-Style Reconstructions}

We first assess the accuracy of point-based and Hermite-style surface
reconstructions. As a base case, we use the double-sphere geometry,
which was obtained by intersecting two unit spheres centered at the
origin and at $[0.5,0.0]^{T}$, respectively. We generated three meshes
using Gmsh \cite{Geuzaine2009gmsh} with 351, 1,402 and 5,598 vertices,
respectively. We evaluate the convergence of CMF, WALF, H-CMF and
H-WALF of degrees 2--7 with IFA parameterization. Figure~\ref{fig:Comparison-between-odd}
plots the $l_{2}$-norm of pointwise errors sampled at degree-6 Gaussian
quadrature points, where the numbers to the right of the plots are
the average convergence rates.

\begin{figure}
\subfloat[]{\includegraphics[width=0.49\textwidth]{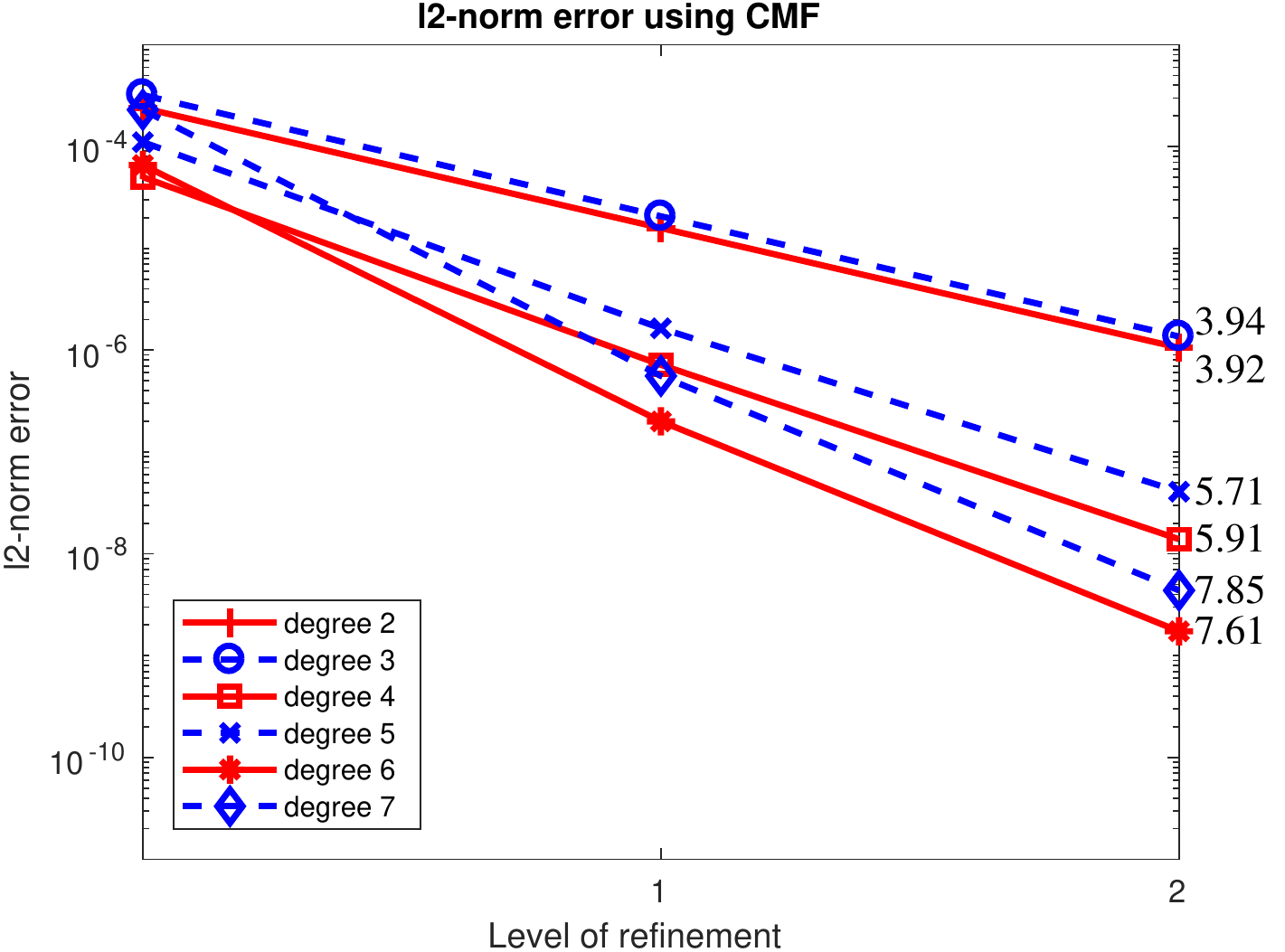}}\subfloat[]{\includegraphics[width=0.49\textwidth]{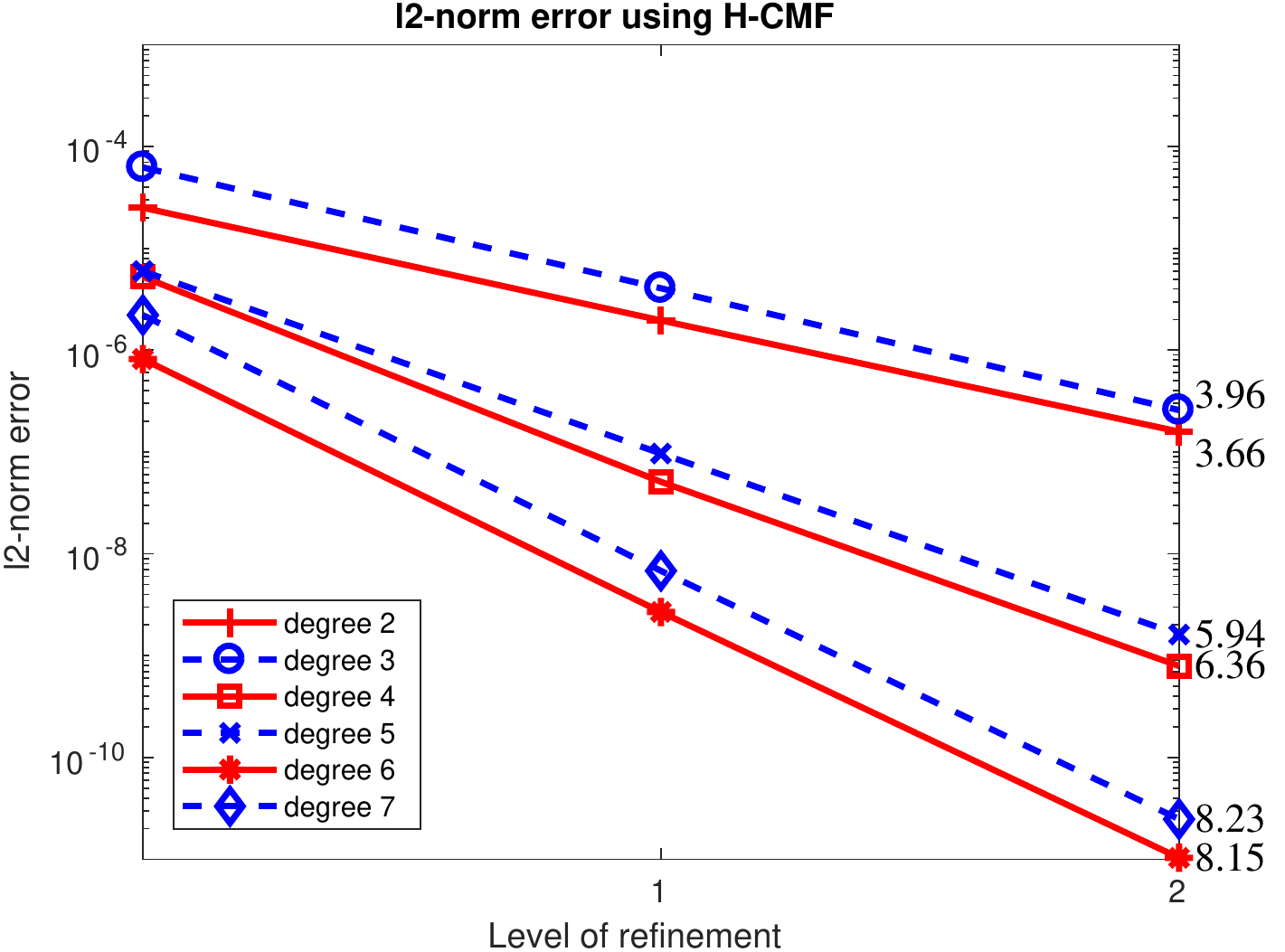}}

\subfloat[]{\includegraphics[width=0.49\textwidth]{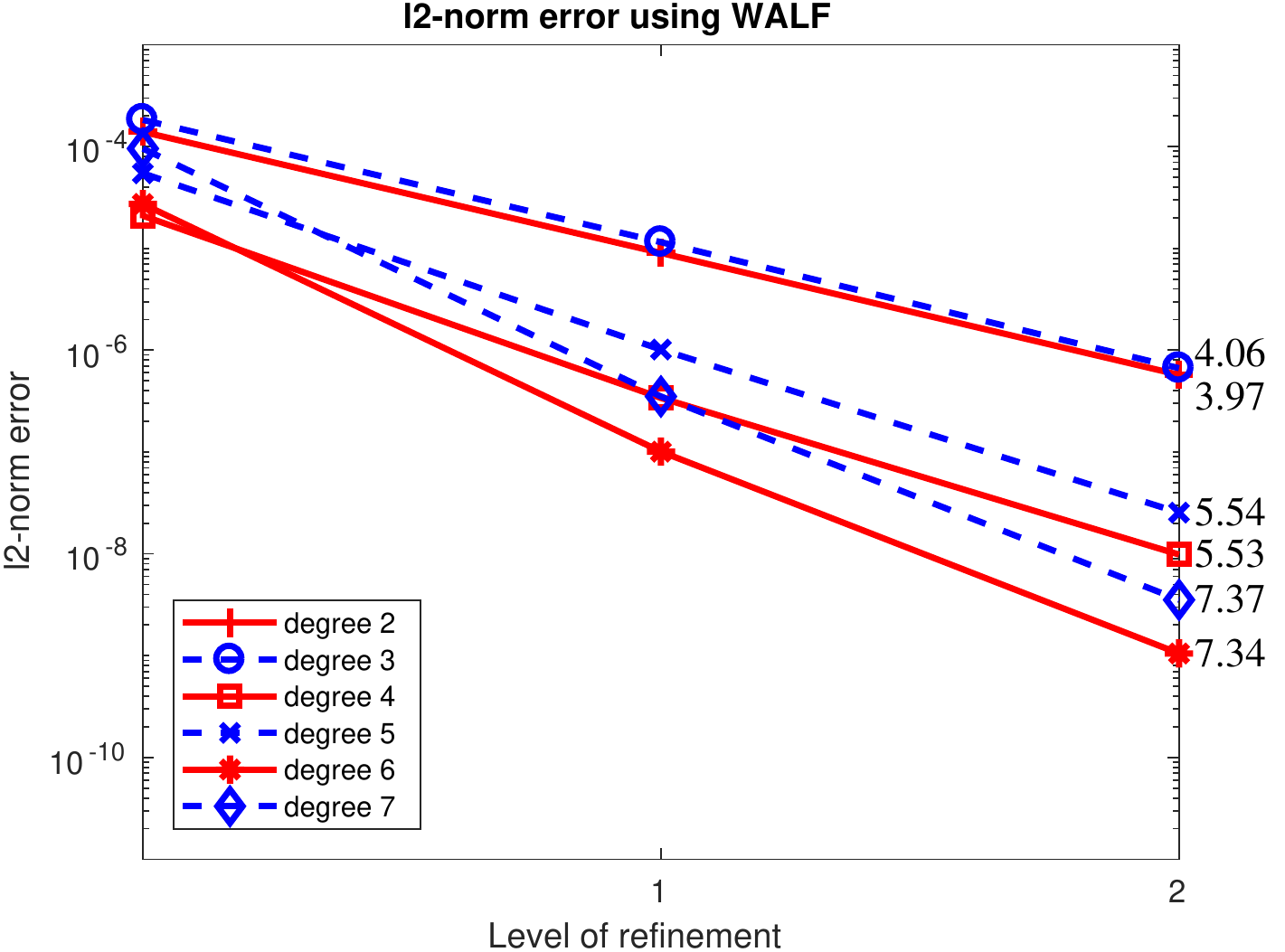}

}\subfloat[]{\includegraphics[width=0.49\textwidth]{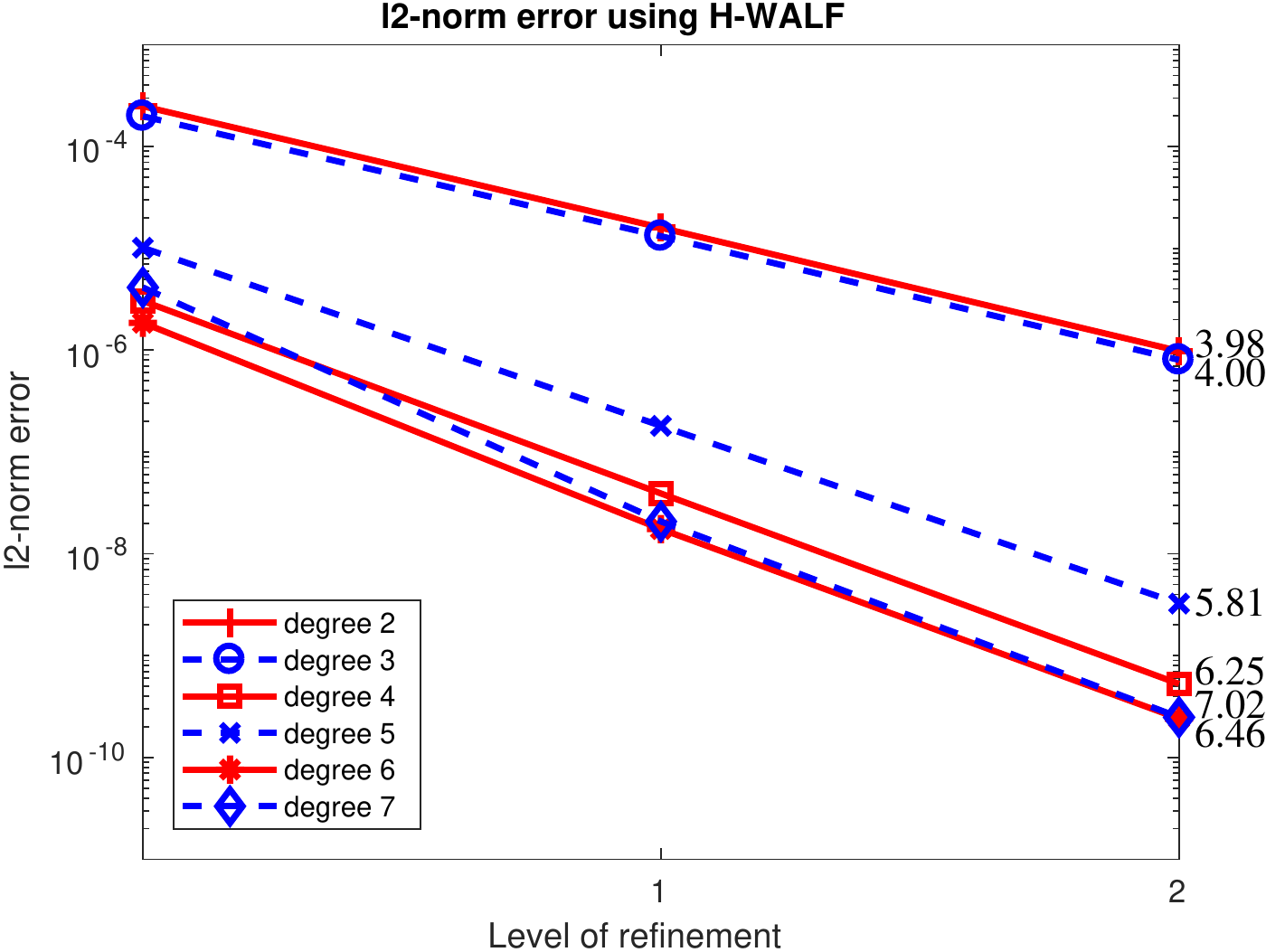}

}

\caption{\label{fig:Comparison-between-odd}Comparison of point-based and Hermite-style
surface reconstructions on double sphere.}
\end{figure}

We make a few observations about this result. First and foremost,
for all the cases, the degree-$2q$ fittings are more accurate than
the degree-$(2q+1)$ fittings. This is because the leading error terms
of the even-degree polynomial fittings are odd degrees, which can
cancel out for nearly symmetric meshes and nearly symmetric geometries.
This leads to superconvergence for even-degree reconstructions. Furthermore,
the degree-$2q$ fittings require smaller stencils than degree-$(2q+1)$
fittings, so they are both more accurate and more efficient. Hence,
in the following tests we will consider only even-degree fittings.

Second, the Hermite-style reconstructions produced much smaller errors
than their point-based counterparts with quartic and sextic polynomials,
due to the more compact stencils and hence smaller constant factors
in the errors. This demonstrates the benefits of Hermite-style reconstruction.
These benefits are even more pronounced on coarsest meshes. Third,
H-CMF and H-WALF had comparable accuracy for degree-4 fittings, but
H-CMF significantly outperformed H-WALF for degree-6 fittings. Hence,
H-CMF is preferred for sextic or higher-degree fittings for its superior
accuracy, but H-WALF is preferred for quartic or lower-degree fittings
for its comparable accuracy and better efficiency.

The above observations are consistent with our theoretical analysis
in the preceding sections. We can also draw similar conclusions for
surface reconstruction of nonuniform geometrics (such as a torus)
as well as curve reconstructions (such as for a helix). Figure~\ref{fig:convergence-torus}
shows the convergence results of surface reconstruction for the torus,
where the three meshes have 898, 3,592, and 14,368 vertices, respectively.
Figure~\ref{fig:convergence-helix} shows the convergence results
of curve reconstruction for the helix, where the three meshes have
256, 512, and 1,024 vertices, respectively. It is clear that (1) even-degree
fittings enjoyed superconvergence, (2) Hermite-style fittings outperformed
their point-based counterparts for quartic and sextic fittings by
one to two orders of magnitude, and (3) H-CMF significantly outperforms
H-WALF for degree-6 reconstructions. However, note that point-based
and Hermite-style reconstructions with quadratic polynomials had comparable
results for smooth surfaces, because they have similar ring sizes.
This behavior is different from that for the double sphere in Figure~\ref{fig:Comparison-between-odd},
for which the Hermite-style reconstruction enabled smaller one-sided
stencils near sharp features and hence better accuracy even for quadratic
polynomials.

\begin{figure}
\subfloat[]{\includegraphics[width=0.5\textwidth]{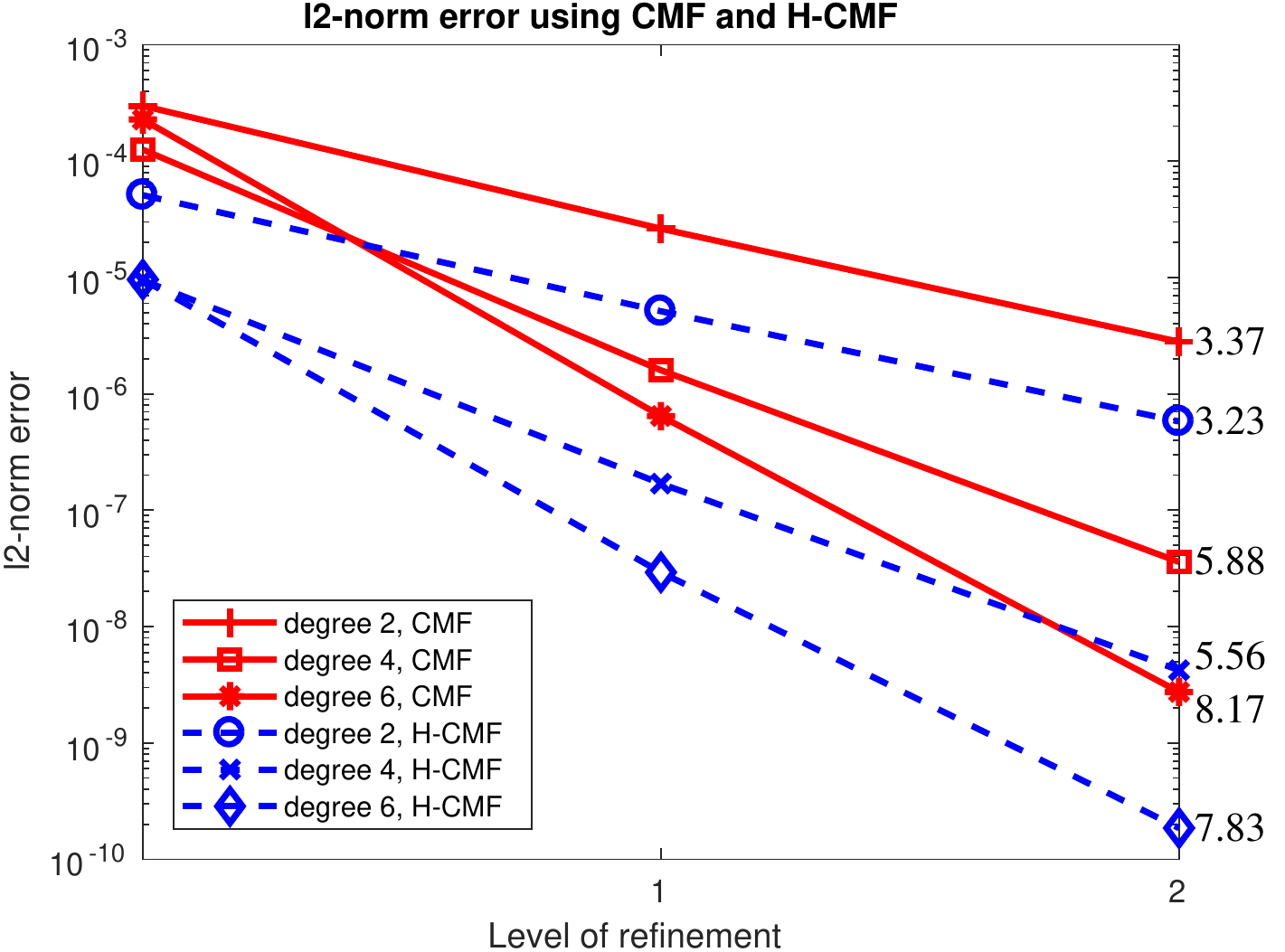}

}\subfloat[]{\includegraphics[width=0.5\textwidth]{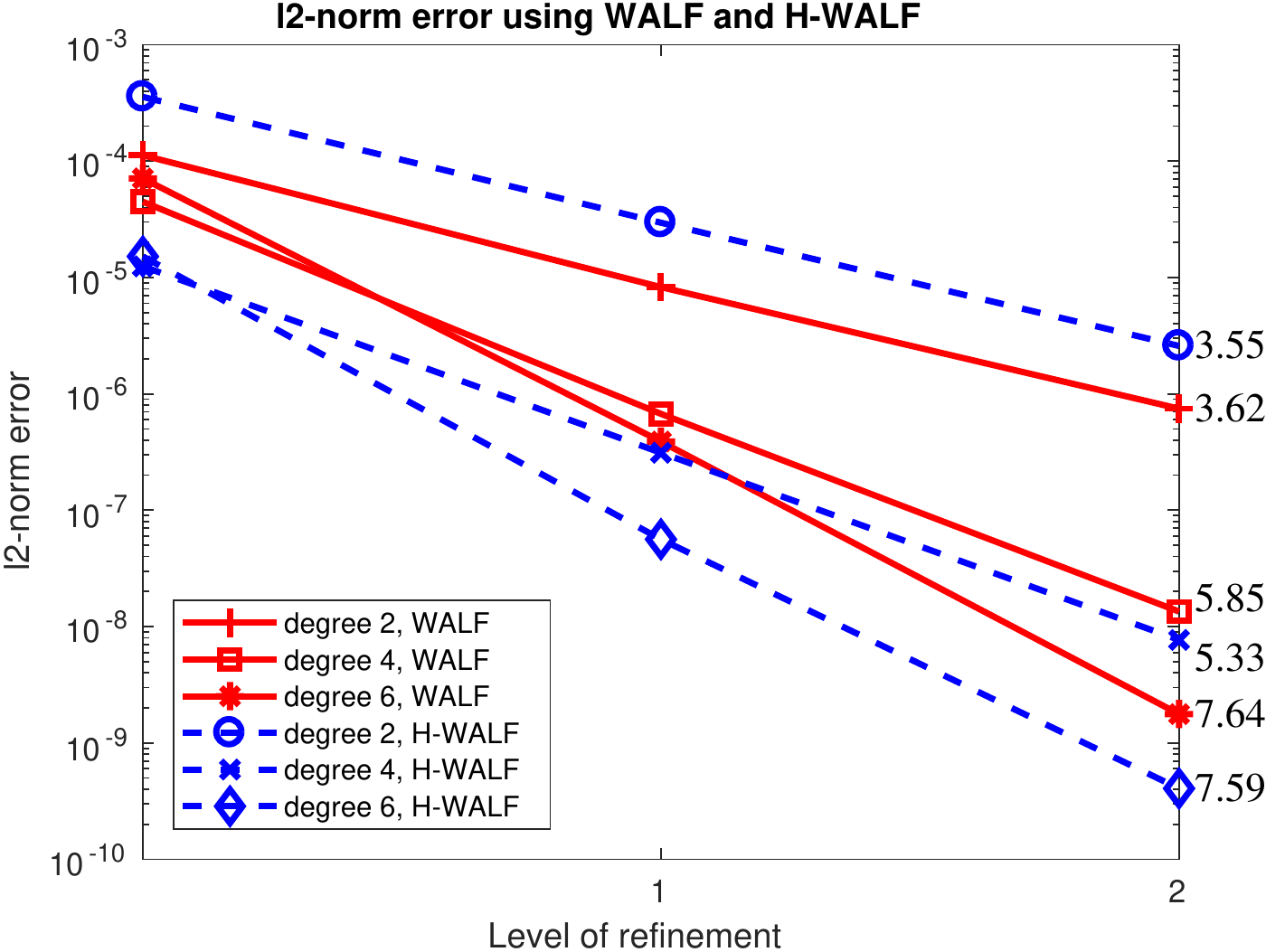}

}

\caption{\label{fig:convergence-torus}Comparison of point-based and Hermite-style
surface reconstructions on the torus.}
\end{figure}

\begin{figure}
\begin{centering}
\subfloat[]{\begin{centering}
\includegraphics[width=0.5\textwidth]{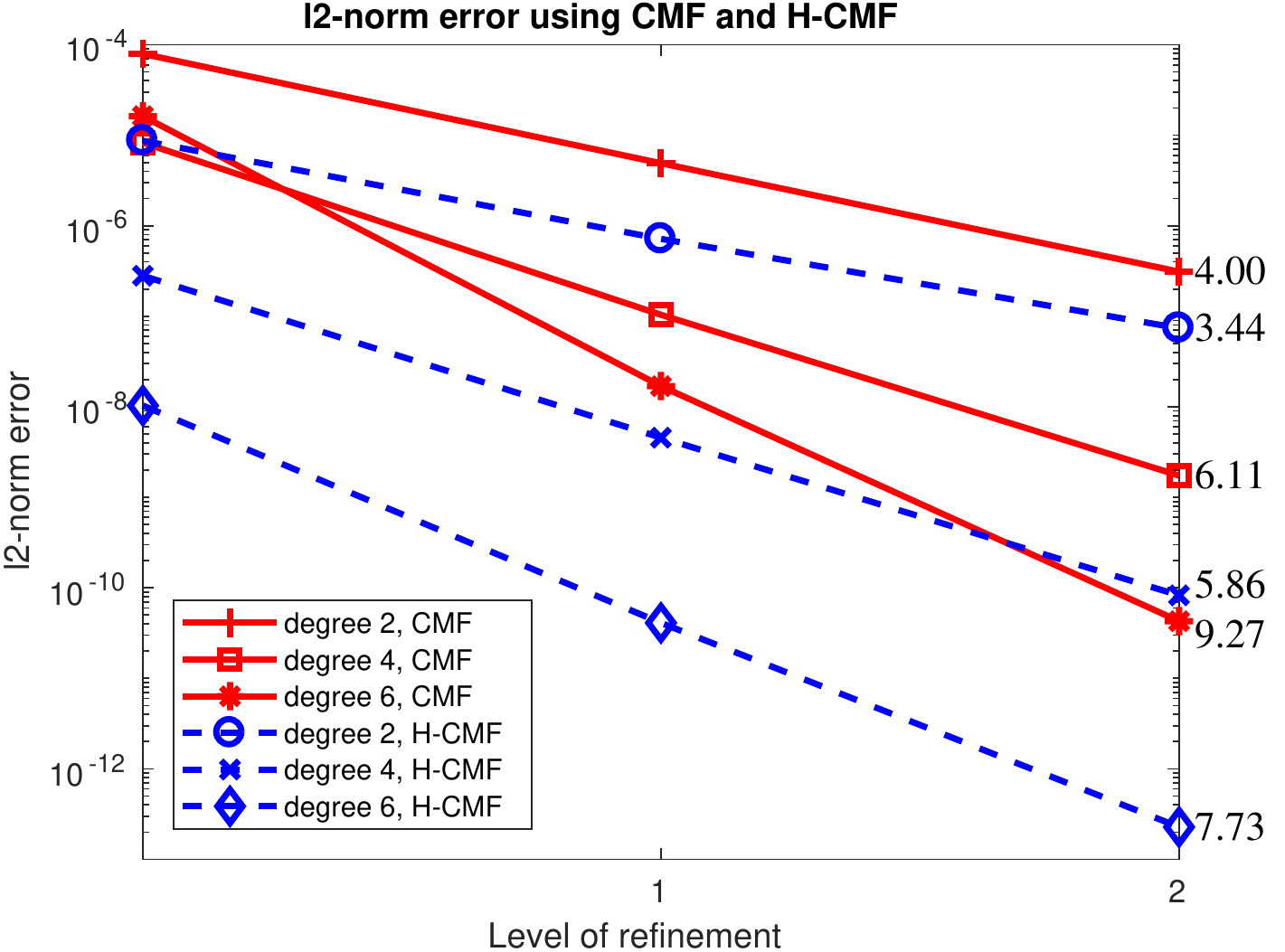}
\par\end{centering}
}\subfloat[]{\begin{centering}
\includegraphics[width=0.5\textwidth]{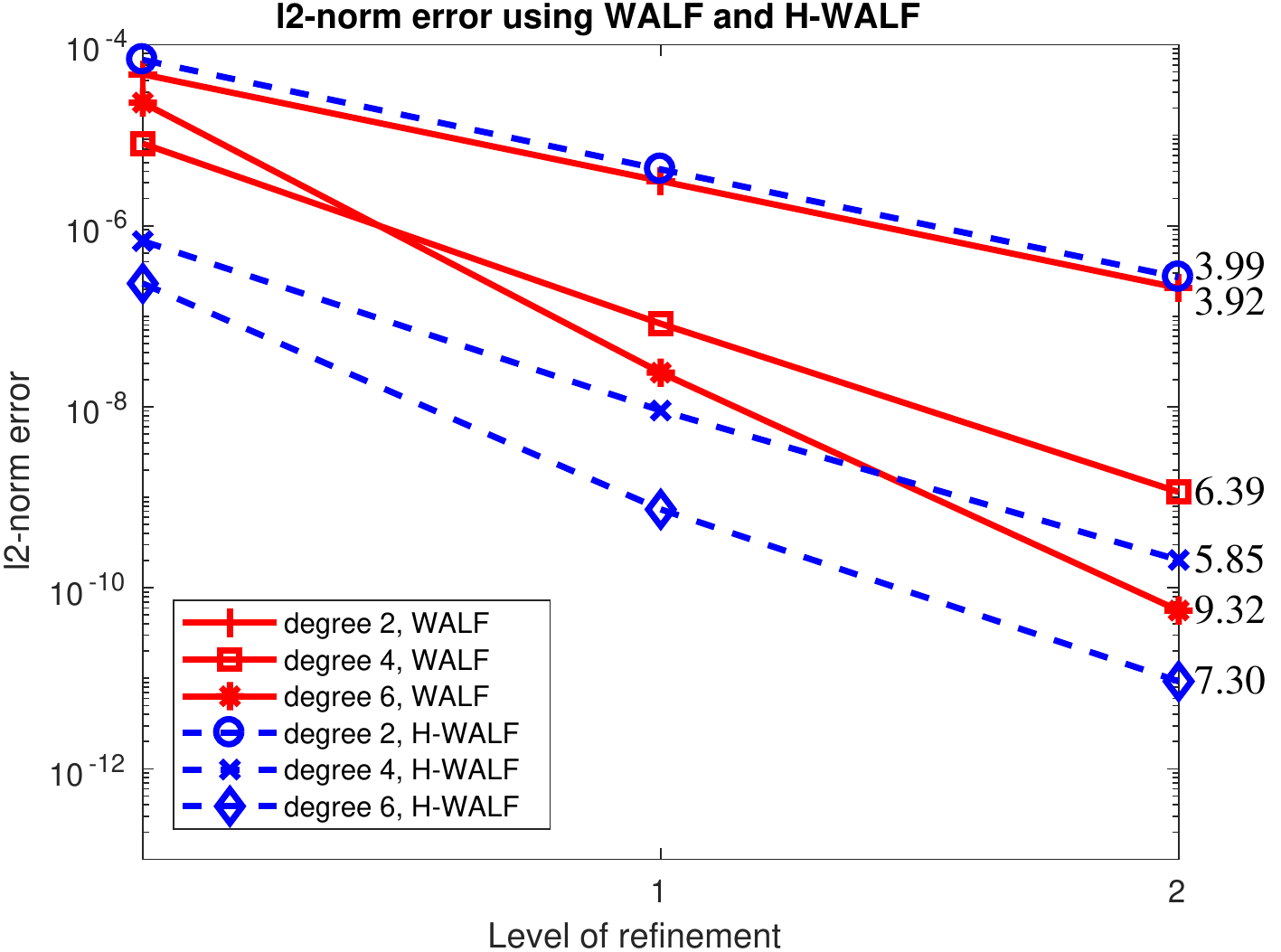}
\par\end{centering}
}
\par\end{centering}
\caption{\label{fig:convergence-helix}Comparison of point-based and Hermite-style
curve reconstructions for the conical helix.}
\end{figure}

\subsection{Benefits of IFA Parameterizations.}

In Section~\ref{subsec:Iterative-Feature-Aware-Paramete}, we introduced
the iterative feature-aware (IFA) parameterization. To demonstrate
its effectiveness, we compare the reconstruction of the double-sphere
and the half-sphere geometries using CMF and H-CMF, with three different
parameterizations:
\begin{itemize}
\item Non-FAP: Projecting the mid-edge and mid-face nodes from the linear
triangle, as illustrated in Figure~\ref{fig:distorted-param};
\item FAP: Using quadratic element to construct intermediate nodes, as illustrated
in Figure~\ref{fig:smoother-param};
\item IFAP: Using multiple levels of intermediate nodes with exponential
growth of the degree, as described in Section~~\ref{subsec:Iterative-Feature-Aware-Paramete}.
\end{itemize}
Since our focus is for feature awareness, we consider only the elements
incident on sharp features when computing the $l_{2}$-norm errors.
Figure~\ref{fig:compare-nearfea-treatment} shows the convergence
rates of degree-6 CMF and H-CMF. To demonstrate the benefit of IFAP
for higher-degree fittings, we also show the convergence results with
degree-8 CMF and H-CMF for the two finer meshes. It is clear that
FAP and IFAP outperformed non-FAP in all cases. IFAP outperformed
FAP significantly for degree-8 H-CMF, although they performed similarly
for the other cases.

\begin{figure}
\begin{centering}
\subfloat[]{\centering{}\includegraphics[width=0.5\textwidth]{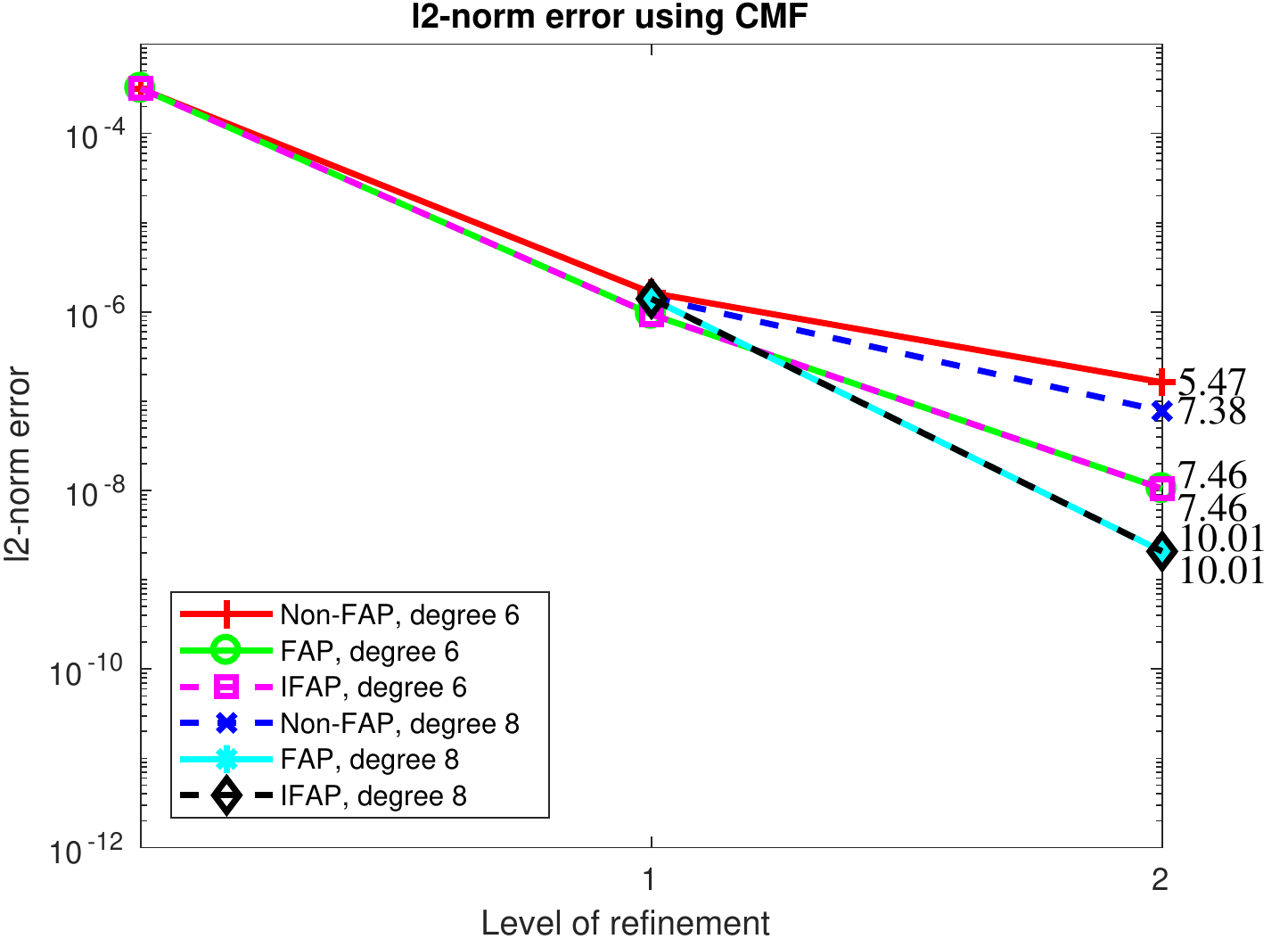}}\subfloat[]{\centering{}\includegraphics[width=0.5\textwidth]{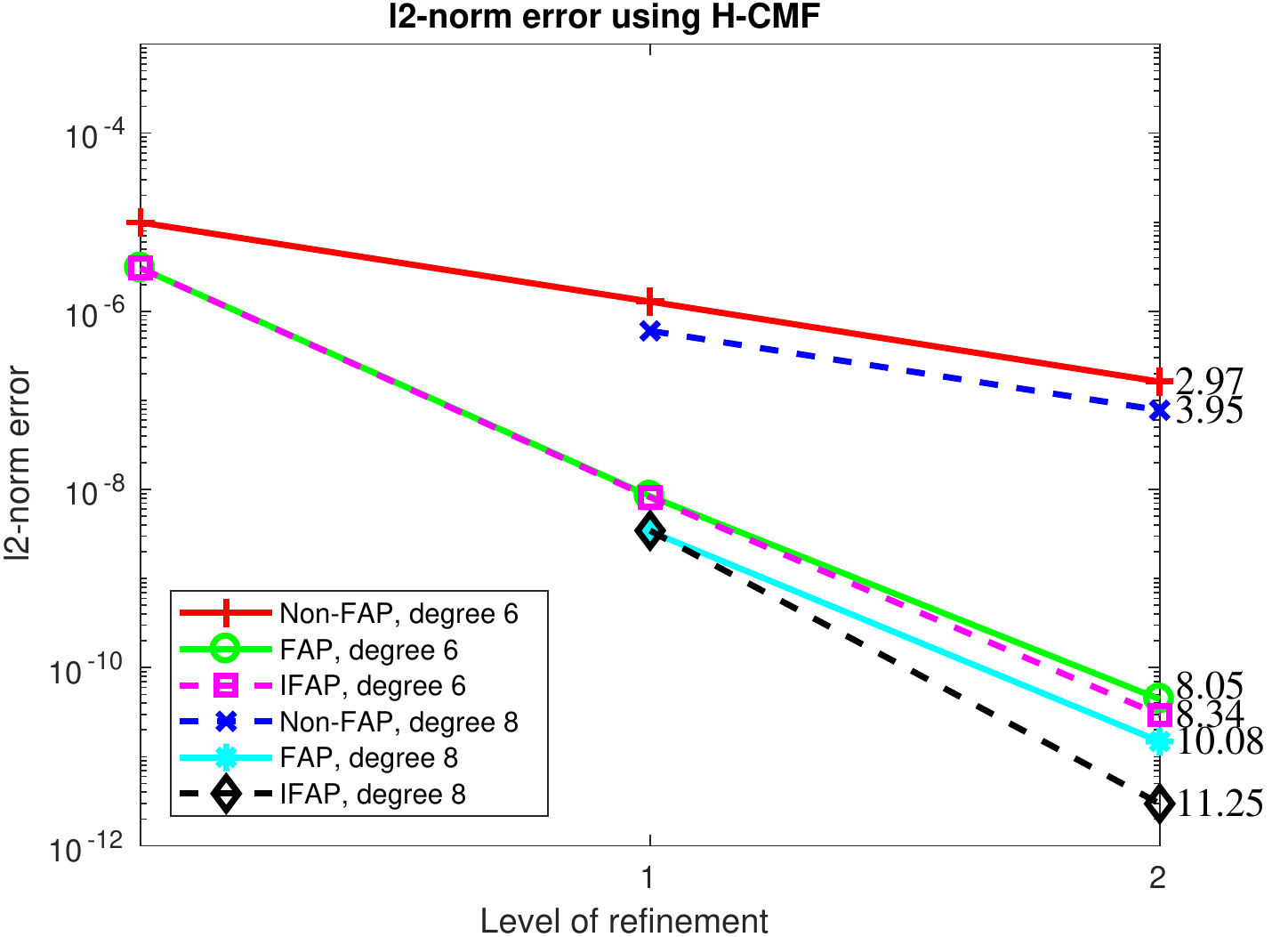}}
\par\end{centering}
\caption{\label{fig:compare-nearfea-treatment}Convergences of CMF with no
Non-FAP, FAP and IFAP for double sphere.}
\end{figure}

\subsection{\label{subsec:FEM}Application to High-Order FEM}

Finally, we demonstrate the application of high-order surface reconstruction
to high-order FEM with curved geometries. It is well known that linear
FEM can deliver second-order convergence rates. High-order FEM uses
higher-degree basis functions to approximate the solutions. However,
these methods may fail to deliver high-order convergence rates if
the curved boundaries are not approximated to at least the same order
of accuracy. Hence, high-order surface reconstruction can play an
important role for these problems.

To demonstrate the effectiveness of high-order surface reconstruction,
we solve the Poisson equation with Dirichlet boundary conditions on
the double-sphere geometry with the analytical solution
\[
u(x,y,z)=e^{(x-0.5)^{2}+y^{2}+z^{2}}.
\]
When the boundary representation is inexact, we set the boundary condition
to the numerical values at the closest points on the exact surface.
This is because the boundary condition are often available only on
the exact surface geometry. To generate the high-order meshes for
the problem, we start with a series of linear tetrahedral mesh with
1,455, 11,640, and 93,120 tetrahedra, respectively, and then add mid-edge,
mid-face, and mid-cell nodes to the linear tetrahedra. The mid-edge
and mid-face nodes on the surfaces are reconstructed using H-CMF with
IFA parameterization. For the tetrahedra incident on the boundary,
we first reconstruct their mid-face and mid-cell nodes also using
IFA parameterization, as mentioned in Section~\ref{subsec:Iterative-Feature-Aware-Paramete}.
We consider the quartic and sextic FEM, which use degree-4 and degree-6
polynomial basis functions, respectively. To isolate the potential
errors in numerical quadrature rule, we used degree-$(2p-2)$ quadrature
rules for degree-$p$ FEM. Figure~\ref{fig:fem-on-double-sphere}
compares the convergence rates of the pointwise errors of interior
nodes in $l_{2}$-norm using piecewise linear boundaries, degree-$p$
H-WALF and H-CMF reconstructed surfaces, and the exact surface. It
can be seen that with piecewise linear boundary, the convergence rates
were limited to second order. For quartic FEM, the results for both
H-WALF and H-CMF are virtually indistinguishable from those using
the exact geometry; we observed the same behavior with quadratic FEM,
whose plots are omitted. For sextic FEM, the results from H-CMF were
the same as using the exact geometry, whereas H-WALF lost some accuracy
due to $\mathcal{O}(h^{6})$ error bound.

\begin{figure}
\subfloat[Quartic FEM.]{\begin{centering}
\includegraphics[width=0.5\textwidth]{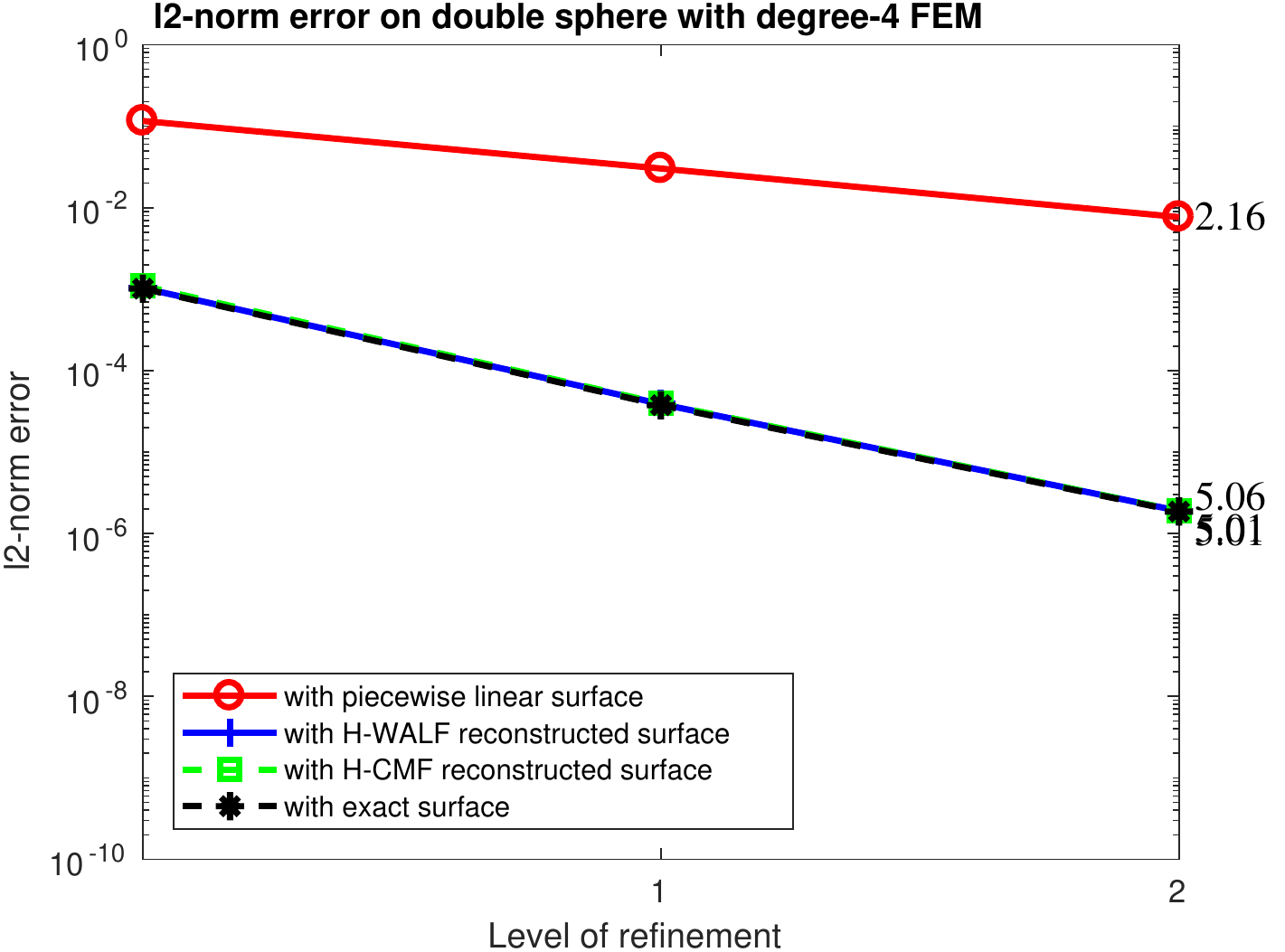}
\par\end{centering}
}\subfloat[Sextic FEM.]{\begin{centering}
\includegraphics[width=0.5\textwidth]{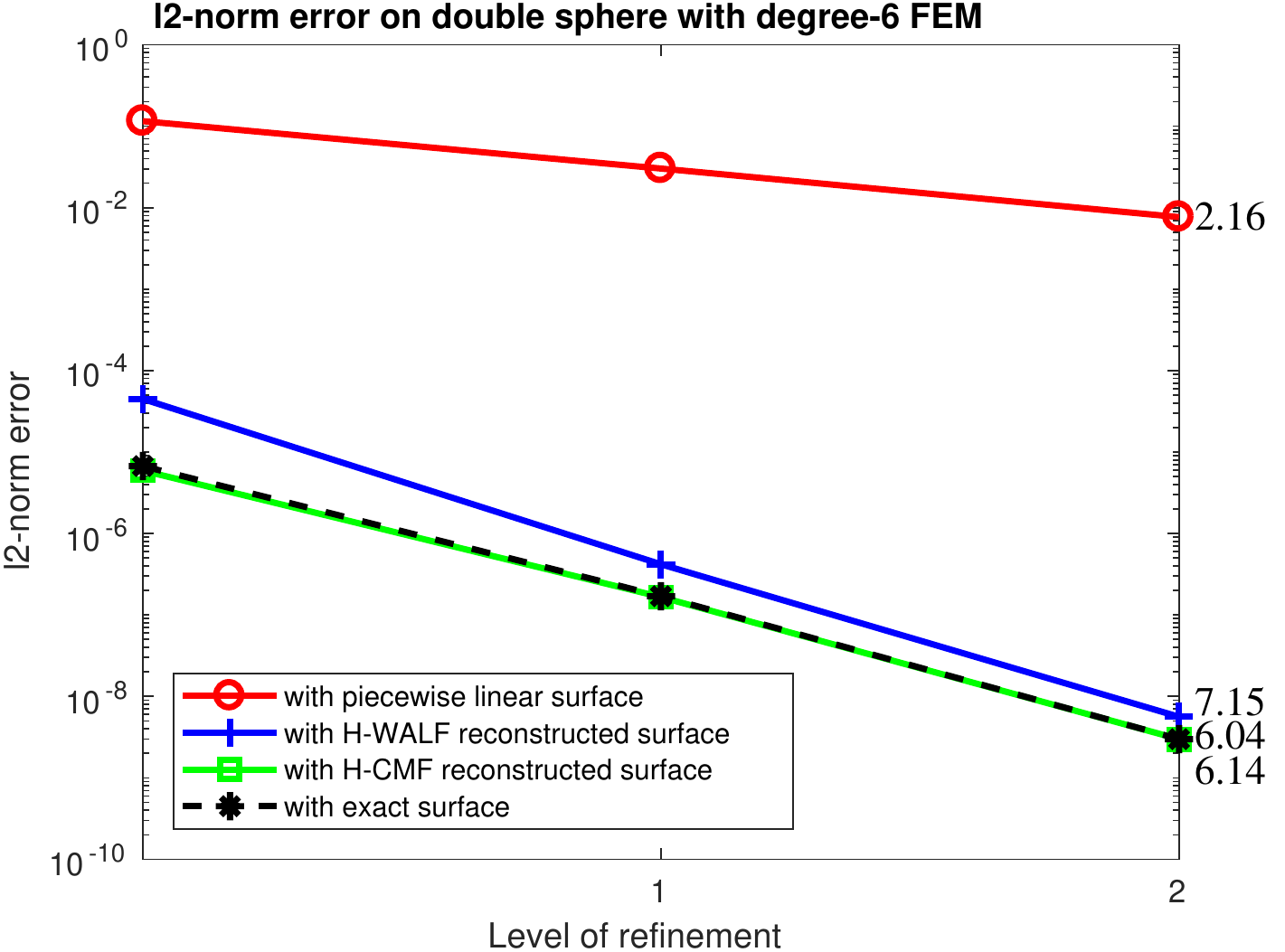}
\par\end{centering}
}

\caption{\label{fig:fem-on-double-sphere} Convergence rates of FEM for solving
Poisson equation with Dirichlet boundary conditions on double-sphere
geometry.}
\end{figure}

For high-order FEM, the feature-aware parameterizations are important
for both the surface and volume meshes. To demonstrate this, Figure~\ref{fig:ifap-FEM}
compares the solutions of quartic and sextic FEM for the same problem
as above with four parameterization strategies:
\begin{itemize}
\item Non-FAP: neither the surface nor the volume elements used FAP;
\item S-FAP: FAP is applied to surface elements but not to volume elements;
\item V-FAP: FAP is applied to volume elements next to the boundary, but
not to surface elements;
\item SV-FAP: FAP is applied to both surface elements and volume elements
next to the boundary.
\end{itemize}
It can be seen that V-FAP and SV-FAP improved accuracy significantly
compared to Nno-FAP and S-FAP. This indicates that FAP for the volume
elements has the most impact on FEM solutions. This impact is expected
to be even greater if the boundary is concave or the degree is even
higher. However, the FAP on the surface elements is also important,
especially for degree-six FEM, when used in conjunction of FAP for
the volume elements.

\begin{figure}
\subfloat[Quartic FEM.]{\begin{centering}
\includegraphics[width=0.5\textwidth]{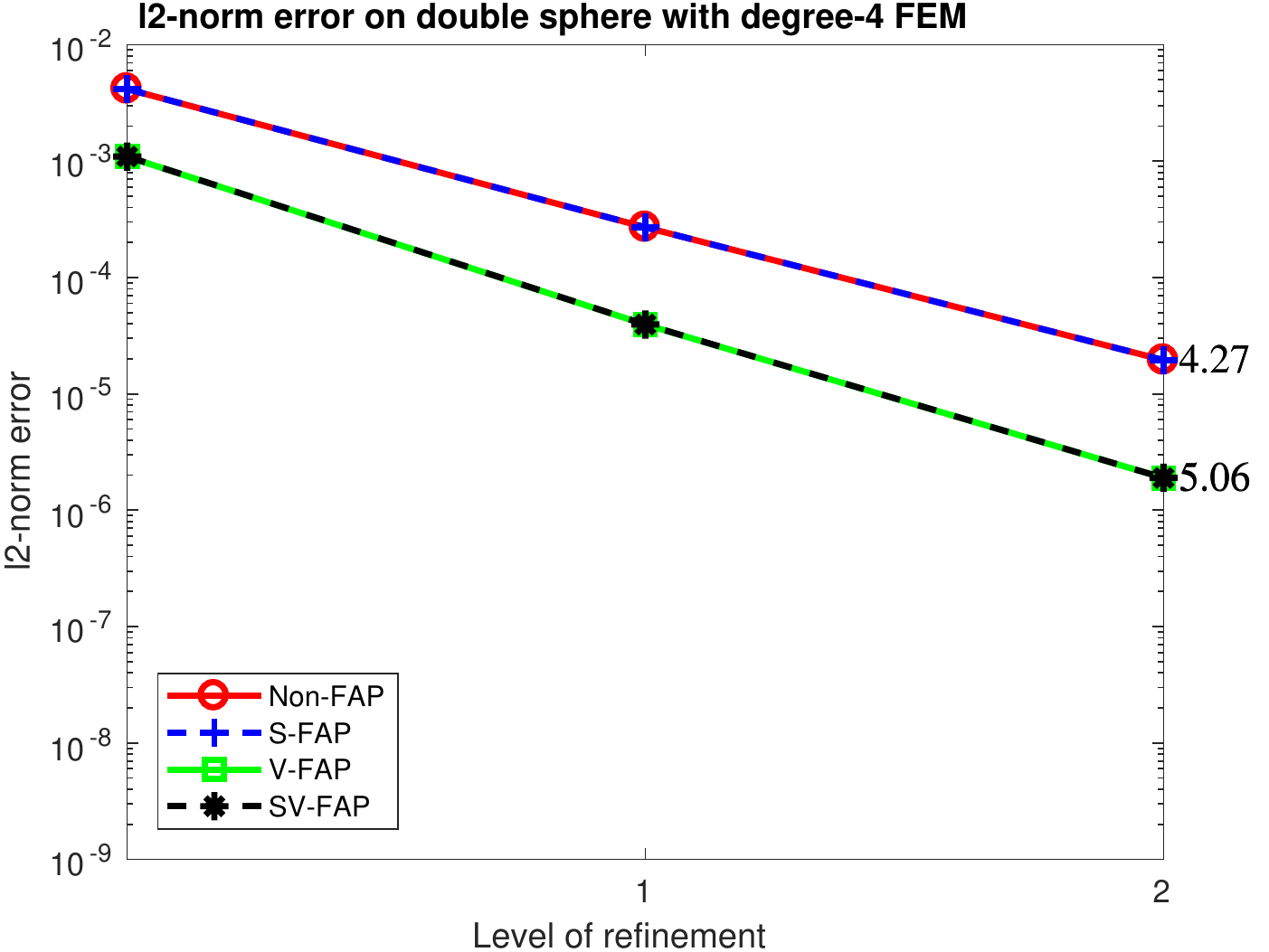}
\par\end{centering}
}\subfloat[Sextic FEM.]{\begin{centering}
\includegraphics[width=0.5\textwidth]{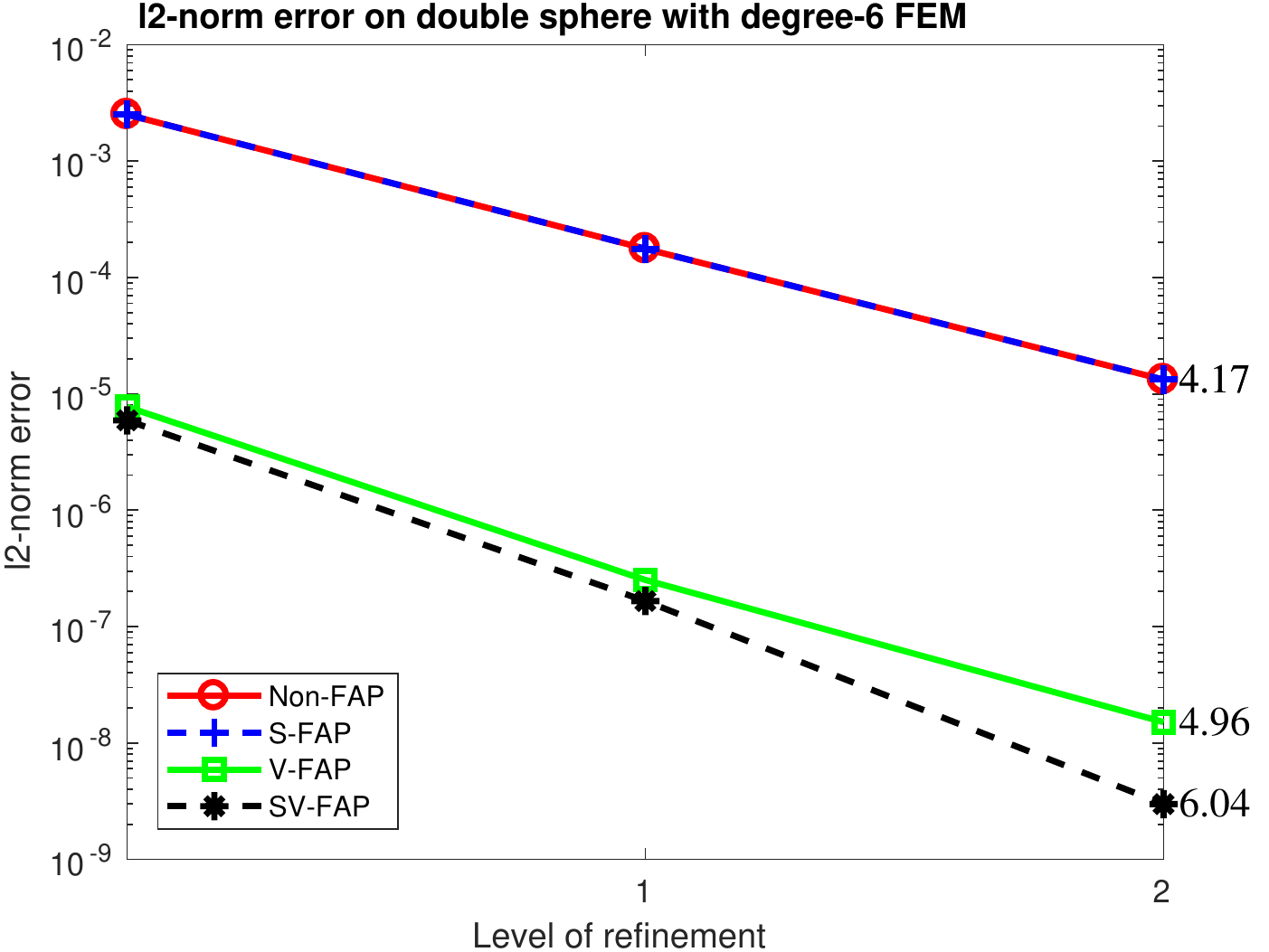}
\par\end{centering}
}

\caption{\label{fig:ifap-FEM} Demonstration of FAP for volume elements near
boundaries in FEM with curved boundaries.}
\end{figure}

\section{\label{sec:Conclusions}Conclusions}

In this paper, we considered the problem of high-order reconstruction
of a piecewise smooth surface from its surface triangulation. This
is important for meshing, geometric modeling, finite element methods,
etc. We introduced two Hermite-style surface reconstructions, called
H-CMF and H-WALF, which extended the point-based CMF and WALF in \cite{Jiao2011RHO}
by taking into account the normals in addition to the points of the
input mesh. In addition, we introduced an iterative feature-aware
(IFA) parameterization for elements near sharp features and boundaries,
which allows us to construct $G^{0}$ continuous parametric surfaces
with guaranteed $(p+1)$st order accuracy for piecewise smooth surfaces.
In addition, we also showed that with even-degree polynomials, the
reconstructions can superconverge at about $(p+2)$nd order. We assessed
the accuracy and stability of these techniques both through theoretical
analysis and numerical experimentations. In terms of applications,
we demonstrated that our high-order reconstructions enabled virtually
indistinguishable results as using exact geometry for high-order FEM.
This shows that our method provides a valuable tool for high-order
FEM, either as an alternative to using exact CAD models when they
are inconvenient to use (such as on supercomputers), or potentially
as the only viable option if the CAD models are unavailable.

Our techniques only enforce $G^{0}$ continuity of the reconstructed
surfaces, which is sufficient for most applications in terms of accuracy
and stability of the numerical approximations, given that the local
parameterizations are smooth. In comparison, some other techniques,
such as NURBS, T-splines, moving least squares, etc., aim for $G^{1}$,
$G^{2}$, or even $G^{\infty}$ continuities, which, however, have
no direct correlation with the accuracy of numerical approximations.
However, our proposed techniques are by no means replacements of traditional
CAD techniques for all applications, as they can complement each other
in different contexts. An important integration of the CAD models
and the proposed Hermite-style reconstruction is the extraction of
the surface normal and the feature curve tangents. Another extension
of this work is to adapt the proposed techniques for high-order reconstructions
of functions on surfaces, which are important for high-order data
transfer across meshes in multiphysics simulations, as well as the
high-order imposition of Neumann boundary conditions on curved geometries
for some variants of finite element methods.

\section*{Acknowledgements}

This work was supported in part under the SciDAC program in the US
Department of Energy Office of Science, Office of Advanced Scientific
Computing Research  through subcontract \#462974 with Los Alamos National
Laboratory and  under a subcontract with Argonne National Laboratory
under Contract DE-AC02-06CH11357.

Assigned: LA-UR-19-20389. LANL is operated by Triad National Security,
LLC, for the National Nuclear Security Administration of the U.S.
DOE.

\bibliographystyle{abbrv}
\bibliography{refs/Uref,refs/refs,refs/hisurf}

\begin{thebibliography}{10}

\bibitem{DHPR04ALE}
J.~Donea, A.~Huerta, J.-P. Ponthot, and A.~Rodriguez-Ferran.
\newblock Arbitrary {L}agrangian-{E}ulerian methods.
\newblock In E.~Stein, R.~de~Borst, and T.~J. Hughes, editors, {\em
  Encyclopedia of Computational Mechanics}, chapter~14. Wiley, 2004.

\bibitem{DREJTAHF2014}
V.~Dyedov, N.~Ray, D.~Einstein, X.~Jiao, and T.~Tautges.
\newblock {AHF}: Array-based half-facet data structure for mixed-dimensional
  and non-manifold meshes.
\newblock In J.~Sarrate and M.~Staten, editors, {\em Proceedings of the 22nd
  International Meshing Roundtable}, pages 445--464. Springer International
  Publishing, 2014.

\bibitem{Far93}
G.~Farin.
\newblock {\em Curves and Surfaces for Computer Aided Geometric Design}.
\newblock Academic Press, San Diego, 3rd edition, 1993.

\bibitem{Fleishman2005}
S.~Fleishman, D.~Cohen-Or, and C.~T. Silva.
\newblock {Robust moving least-squares fitting with sharp features}.
\newblock {\em ACM Trans. Comput. Graph. (TOG)}, 24(3), 2005.

\bibitem{FG00MGA}
P.~J. Frey and P.~L. George.
\newblock {\em Mesh Generation: Application to finite elements}.
\newblock Hermes, 2000.

\bibitem{Geuzaine2009gmsh}
C.~Geuzaine and J.-F. Remacle.
\newblock Gmsh: a three-dimensional finite element mesh generator with built-in
  pre- and post-processing facilities.
\newblock {\em Int. J. Numer. Meth. Engrg.}, 79(11):1309--1331, 2009.

\bibitem{GI04NCO}
J.~Goldfeather and V.~Interrante.
\newblock A novel cubic-order algorithm for approximating principal direction
  vectors.
\newblock {\em ACM Trans. Comput. Graph. (TOG)}, 23(1):45--63, 2004.

\bibitem{Golub13MC}
G.~H. Golub and C.~F. {Van Loan}.
\newblock {\em Matrix Computations}.
\newblock Johns Hopkins, 4th edition, 2013.

\bibitem{Hughes_isogeo}
T.~J.~R. Hughes, J.~A. Cottrell, and Y.~Bazilevs.
\newblock Isogeometric analysis: {CAD}, finite elements, {NURBS}, exact
  geometry, and mesh refinement.
\newblock {\em Comput. Meth. Appl. Mech. Engrg.}, 194:4135--4195, 2005.

\bibitem{JB08ICC}
X.~Jiao and N.~Bayyana.
\newblock Identification of {$C^1$} and {$C^2$} discontinuities for surface
  meshes in {CAD}.
\newblock {\em Comput. Aid. Des.}, 40:160--175, 2008.

\bibitem{JCNH09AMA}
X.~Jiao, A.~Colombi, X.~Ni, and J.~Hart.
\newblock Anisotropic mesh adaptation for evolving triangulated surfaces.
\newblock {\em Engrg. Comput.}, 26:363--376, 2010.

\bibitem{Jiao2011RHO}
X.~Jiao and D.~Wang.
\newblock Reconstructing high-order surfaces for meshing.
\newblock {\em Engrg. Comput.}, 28:361--373, 2012.

\bibitem{Jiao2008}
X.~Jiao, D.~Wang, and H.~Zha.
\newblock Simple and effective variational optimization of surface and volume
  triangulations.
\newblock In {\em Proceedings of 17th International Meshing Roundtable}, pages
  315--332, 2008.

\bibitem{JZ08CCF}
X.~Jiao and H.~Zha.
\newblock Consistent computation of first- and second-order differential
  quantities for surface meshes.
\newblock In {\em ACM Solid and Physical Modeling Symposium}, pages 159--170.
  ACM, 2008.

\bibitem{LevinD.1998}
D.~Levin.
\newblock {The approximation power of moving least-squares}.
\newblock {\em Math. Comput.}, 67:1517--1531, 1998.

\bibitem{rapetti2012generation}
F.~Rapetti, A.~Sommariva, and M.~Vianello.
\newblock On the generation of symmetric {L}ebesgue-like points in the
  triangle.
\newblock {\em J. Comput. Appl. Math.}, 236(18):4925--4932, 2012.

\bibitem{Sederberg03TSplines}
T.~W. Sederberg, J.~Zheng, A.~Bakenov, and A.~Nasri.
\newblock {T}-splines and {T-NURCCs}.
\newblock {\em ACM Trans. Graph.}, 22(3):477--484, 2003.

\bibitem{VDS69CNE}
A.~{van der Sluis}.
\newblock Condition numbers and equilibration of matrices.
\newblock {\em Numer. Math.}, 14:14--23, 1969.

\bibitem{VPBM01CPN}
A.~Vlachos, J.~Peters, C.~Boyd, and J.~L. Mitchell.
\newblock Curved {PN} triangles.
\newblock In {\em Proc.of the 2001 Symposium on Interactive {3D} graphics},
  pages 159--166, 2001.

\bibitem{Walton1996}
D.~Walton.
\newblock {A triangular G1 patch from boundary curves}.
\newblock {\em Comput. Aid. Des.}, 28(2):113--123, 1996.

\bibitem{wendland1995piecewise}
H.~Wendland.
\newblock Piecewise polynomial, positive definite and compactly supported
  radial functions of minimal degree.
\newblock {\em Adv. Comput. Math.}, 4(1):389--396, 1995.

\end{thebibliography}

\end{document}